\documentclass[a4paper,11pt]{amsart}
\usepackage{a4wide}

\usepackage{amssymb}
\usepackage{amsmath}
\usepackage{amsthm}

\allowdisplaybreaks

\pagestyle{myheadings}\markboth{}
{}

\usepackage{amsfonts}
\usepackage{amssymb}
\usepackage{amsmath}
\usepackage{amsthm}
\usepackage{bbm}
\usepackage{verbatim}
\usepackage{url}
\usepackage[latin1]{inputenc} 
\usepackage[T1]{fontenc}

\usepackage{amsfonts}
\usepackage{amsmath,amssymb,amsthm}


\usepackage{mathrsfs}
\usepackage{verbatim}

\usepackage{graphicx}
\usepackage{ifpdf}

\DeclareMathOperator{\ch}{Ch}

\topmargin  0in 
\oddsidemargin -0.0in %
\evensidemargin -0.0in %
\textheight 9.5in %
\textwidth 6.45in %

\setcounter{tocdepth}{1}

\newtheorem{thm}{Theorem}[section]
\newtheorem{prop}[thm]{Proposition}
\newtheorem{lem}[thm]{Lemma}
\newtheorem{cor}[thm]{Corollary}
\newtheorem{lemma}[thm]{Lemma}

\theoremstyle{definition} 

\newtheorem{defn}[thm]{Definition}

\newcommand{\pa}{\varphi^t}

\newcommand{\R}{\mathbb{R}}

\newcommand{\D}{\mathscr{D}}

\newcommand{\XX}{X}

\newcommand{\bmo}{{\rm BMO}}
\newcommand{\lbmo}{{\rm bmo}}

\def\cg{{\mathcal G}}

\def\nhoq{{h^{1,\,q}(\rlz)}}

\newcommand{\GGp}{{\mathop \cg\limits^{\circ}}_{1,\,1}}

\newcommand{\X}{X_1\times X_2}

\numberwithin{equation}{section}

\def\rr{{\mathbb R}}

\def\zz{{\mathbb Z}}
\def\nn{{\mathbb N}}

\def\ch{{\mathcal H}}

\def\cm{{\mathcal M}}

\def\fz{\infty}
\def\az{\alpha}
\def\supp{{\mathop\mathrm{\,supp\,}}}

\def\loc{{\mathop\mathrm{\,loc\,}}}

\def\lz{\lambda}

\def\ez{\epsilon}

\def\vz{\varphi}

\def\pa{\partial}
\def\wz{\widetilde}


\def\rizo{{R_{\Delta_\lz,\,1}}}
\def\rizt{{R_{\Delta_\lz,\,2}}}

\def\lozd{{L^1(\rlz)}}

\def\lpzd{{L^p(\rlz)}}



\def\ls{\lesssim}
\def\gs{\gtrsim}

\def\tbz{{\triangle_\lz}}
\def\dmz{{dm_\lz}}

\def\riz{{R_{\Delta_\lz}}}

\def\wrizo{\wz {R_{\Delta_\lz,1 }}}
\def\wrizt{\wz {R_{\Delta_\lz,2 } }}

\def\lpz{{L^p(\rr_+,\, dm_\lz)}}

\def\dsum{\displaystyle\sum}
\def\dint{\displaystyle\int}

\def\dfrac{\displaystyle\frac}

\def\r{\right}
\def\lf{\left}

\def\noz{\nonumber}

\def\rlz{\R_\lz}





\def\ltp{{L^2\,(\rlz)}}

\def\hop{{h^1\,(\rlz)}}


\def\XXint#1#2#3{{\setbox0=\hbox{$#1{#2#3}{\int}$}
     \vcenter{\hbox{$#2#3$}}\kern-.5\wd0}}

\def\mbhh{{[ b, \rizo\rizt]}}

\def\hop{{h^1\,(\rlz)}}
\def\bmop{{{\rm bmo}\,(\rlz)}}

\allowdisplaybreaks
\arraycolsep=1pt

\begin{document}

\title[Product BMO, little BMO and Riesz commutators]{Product BMO, little BMO and Riesz commutators \\
in the Bessel setting}

\author{Xuan Thinh Duong}
\address{Xuan Thinh Duong, Department of Mathematics\\
         Macquarie University\\
         NSW 2019\\
         Australia
         }
\email{xuan.duong@mq.edu.au}

\author{Ji Li}
\address{Ji Li, Department of Mathematics\\
         Macquarie University\\
         NSW 2019\\
         Australia
         }
\email{ji.li@mq.edu.au}

\author{Yumeng Ou}
\address{Yumeng Ou, Department of Mathematics\\
         Massachusetts Institute of Technology\\
         Cambridge, MA 02139 USA
         }
\email{yumengou@mit.edu}

\author{Brett D. Wick}
\address{Brett D. Wick, Department of Mathematics\\
         Washington University -- St. Louis\\
         St. Louis, MO 63130-4899 USA
         }
\email{wick@math.wustl.edu}

\author{Dongyong Yang$^\ast$}
\address{Dongyong Yang(Corresponding author), School of Mathematical Sciences\\
 Xiamen University\\
  Xiamen 361005,  China
  }
\email{dyyang@xmu.edu.cn }



\subjclass[2010]{42B35, 42B30}

\date{\today}


\keywords{Product BMO, little bmo, commutators, Bessel operators}

\thanks{$\ast$ Corresponding author}

\begin{abstract}
In this paper, we study the product BMO space, little bmo space and their connections with the corresponding commutators associated with Bessel operators studied by Weinstein, Huber, and by Muckenhoupt--Stein.  We first prove that the product BMO space in the Bessel setting can be used to deduce the boundedness of the iterated commutators with the Bessel Riesz transforms.  We next study the little $\rm bmo$ space in this Bessel setting and obtain the equivalent characterization of this space in terms of commutators, where the main tool that we develop is the characterization of the predual of little bmo and its weak factorizations.  We further show that in analogy with the classical setting, the little $\rm bmo$ space is a proper subspace of the product $\rm BMO$ space. These extend the previous related results studied by Cotlar--Sadosky and Ferguson--Sadosky on the bidisc to the Bessel setting, where the usual analyticity and Fourier transform do not apply.
\end{abstract}

\maketitle




\section{Introduction}
\label{sec:introduction}
\setcounter{equation}{0}

The study of commutators of multiplication operators with Calder\'on--Zygmund operators has its roots in complex function theory and Hankel operators.  This was later extended to the case of general Calder\'on--Zygmund operators by Coifman, Rochberg and Weiss \cite{crw}, who showed that the space of bounded mean oscillation introduced by John and Nirenberg is characterized by a family of commutators:
$$
\left\Vert b\right\Vert_{\bmo\left(\mathbb{R}^n\right)}\approx \max_{1\leq j\leq n} \left\Vert [M_b,R_j]\right\Vert_{L^p(\mathbb{R}^n)\to L^p(\mathbb{R}^n)}
$$
where $R_j$ is the $j$th Riesz transform.  Results of this type have then been extended by Uchiyama to handle spaces of homogeneous type under certain assumptions on the measures and to show that a single Hilbert transform (Riesz transform) actually characterizes $\rm BMO$ \cite{U}.  These results were further extended to the multiparameter setting showing that the product $\rm BMO$ space of Chang and Fefferman can also be characterized by iterated commutators (see Hilbert transform in \cite{fl} and Riesz transforms in \cite{LPPW}) and little $\rm bmo$ by the boundedness of two commutators (see Hilbert transform in \cite{fs} and Riesz transforms in \cite{DLWY3}).  The analysis here is intimately connected to the underlying space $\mathbb{R}^n$ and to the fact that the Riesz transforms are connected to a particular differential operator, the Laplacian.


In 1965, B. Muckenhoupt and E. Stein in \cite{ms} introduced harmonic analysis associated with Bessel operator $\tbz$, defined by
setting for suitable functions $f$,
\begin{equation*}
\tbz f(x):=\frac{d^2}{dx^2}f(x)+\frac{2\lz}{x}\frac{d}{dx}f(x),\quad \lz>0,\quad x\in \R_+:=(0,\fz).
\end{equation*}
The related elliptic partial differential equation is the following ``singular Laplace equation''
\begin{equation}\label{bessel laplace equation}
\triangle_{t,\,x} (u) :=\pa_{t}^2u + \pa_{x}^2u+\frac{2\lz}{x}\pa_{x}u=0
\end{equation}
studied by A. Weinstein \cite{w}, and A. Huber \cite{Hu} in higher dimensions, where they considered the generalised axially symmetric potentials, and obtained the properties of the solutions of this equation, such as the extension, the uniqueness theorem, and the boundary value problem for certain domains.
%
%
In \cite{ms} they developed a theory in the setting of
$\tbz$ which parallels the classical one associated to the standard Laplacian, where results on $\lpz$-boundedness of conjugate
functions and fractional integrals associated with $\tbz$ were
obtained for $p\in[1, \fz)$ and $\dmz(x):= x^{2\lz}\,dx$.

We also point out that Haimo \cite{h} studied the Hankel convolution transforms $\vz \sharp_\lz f$ associated with the Hankel transform in the Bessel setting systematically,
which provides a parallel theory to the classical convolution and Fourier transforms. It is well-known that
the Poisson integral of $f$ studied in \cite{ms} is the Hankel convolution of the Poisson kernel with $f$, see \cite{bdt}.
Since then, many problems in the Bessel context were studied, such as the boundedness of the Bessel Riesz transform,
Littlewood--Paley functions, Hardy and BMO spaces associated with Bessel operators, $A_p$ weights associated with Bessel operators
(see, for example, \cite{k78,ak,bfbmt,v08,bfs,bhnv,bcfr,yy,DLWY,DLWY2,DLMWY} and the references therein).

The aim of this paper is to study the product BMO and little bmo spaces via Riesz commutators in the Bessel setting.
In particular, the two main results we obtain can be seen as the analogs in the Bessel setting of the corresponding results in the classical setting.  Notably in our proof we bypass the use of analyticity and Fourier transform since they are not applicable in this Bessel operator setting.  We first show that the product $\rm BMO$ space in the Bessel setting can be used to prove the boundedness of the iterated commutators with the Bessel Riesz transforms.  We next study the little $\rm bmo$ space in this Bessel setting and obtain the equivalent characterization of this space in terms of commutators.  We further show, again in analogy with the classical setting, that the little $\rm bmo$ space is a proper subspace of the product $\rm BMO$ space.

To be more precise, for every interval $I\subset \R_+$, we denote it by $I:=I(x,t):= (x-t,x+t)\cap \R_+$. The measure of $I$ is defined as
$m_\lz(I(x,t)):=\int_{I(x,\,t)} y^{2\lz} dy$.  And recall that the Riesz transform $\riz(f)$ is defined as follows
\begin{align}\label{riz}
\riz(f)(x):= \int_{\R_+}  -\dfrac{2\lz}{\pi}\dint_0^\pi\dfrac{(x-y\cos\theta)(\sin\theta)^{2\lz-1}}
{(x^2+y^2-2xy\cos\theta)^{\lz+1}}\,d\theta \ f(y) \dmz(y).
\end{align}
In the product setting $\mathbb{R}_+\times \mathbb{R}_+$, we define $d\mu_\lz(x_1, x_2):=dm_\lz(x_1)\times dm_\lz(x_2)$ and
$ \rlz:= (\mathbb{R}_+\times \mathbb{R}_+,d\mu_\lz(x_1, x_2)).$ We denote by $R_{\Delta_\lambda,1}$ the Riesz transform on the first variable and $R_{\Delta_\lambda,2}$ the second.



The first main result of this paper is the upper bound of the iterated Riesz commutators $[ [b, R_{\Delta_\lambda,1}],R_{\Delta_\lambda,2}] $ in terms of product BMO space $\bmo_{\Delta_\lambda} (\rlz)$. For the definition of $\bmo_{\Delta_\lambda} (\rlz)$ we refer to Definition 2.5 in Section 2.

\begin{thm}\label{thm main1}
Let $b\in \bmo_{\Delta_\lambda} (\rlz)$. Then we have
\begin{align}
\|[ [b, R_{\Delta_\lambda,1}],R_{\Delta_\lambda,2}] \|_{L^2(\rlz)\to L^2(\rlz)} \leq C \|b\|_{\bmo_{\Delta_\lambda} (\rlz)}.
\end{align}
\end{thm}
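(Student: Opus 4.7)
The plan is to prove the upper bound by dualization together with product $H^1$-$\bmo$ duality in the Bessel setting. Fix $f, g \in L^2(\rlz)$. Expanding
\[
[[b,\rizo],\rizt]f \;=\; b\,\rizo\rizt f \;-\; \rizo(b\,\rizt f) \;-\; \rizt(b\,\rizo f) \;+\; \rizo\rizt(bf),
\]
pairing against $g$, and transferring each Riesz transform onto $g$ via its $L^2(\rlz)$-adjoint $\wriz$ (the adjoints of $\rizo,\rizt$ are bounded on $L^2(\rlz)$ as is standard in the Bessel setting), one obtains
\[
\bigl\langle [[b,\rizo],\rizt]f,\, g\bigr\rangle \;=\; \bigl\langle b,\, \Psi(f,g)\bigr\rangle,
\]
where $\Psi(f,g)$ is the four-term bilinear expression
\[
\Psi(f,g) \;=\; g\,\rizo\rizt f \;-\; \wrizo g\cdot \rizt f \;-\; \wrizt g\cdot \rizo f \;+\; \wrizo\wrizt g\cdot f.
\]
The key reduction is then the bilinear estimate
\[
\|\Psi(f,g)\|_{H^{1}_{\Delta_\lambda}(\rlz)} \;\lesssim\; \|f\|_{L^2(\rlz)}\,\|g\|_{L^2(\rlz)},
\]
which, combined with the product $H^1$-$\bmo$ pairing in the Bessel setting, will yield the theorem.

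\smallskip

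To establish the bilinear estimate, I would use the area function characterization of $H^{1}_{\Delta_\lambda}(\rlz)$ built from the Bessel Poisson semigroup $\plzo\otimes\plzt$. Applying the product square function to $\Psi(f,g)$ and expanding via a Calder\'on reproducing formula based on $\qlzo\otimes\qlzt$, the four-term structure of $\Psi$ is precisely what is needed for the ``diagonal'' pieces to drop out, leaving bilinear paraproduct-type terms that factor essentially as products of one-parameter Littlewood--Paley objects in each variable. Each such paraproduct can be dominated by a tensor product of one-dimensional Bessel area functions applied separately to $f$ and $g$; Cauchy--Schwarz together with the $L^2$-boundedness of these area functions then gives the claimed $L^2\times L^2\to H^{1}_{\Delta_\lambda}$ bilinear bound.

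\smallskip

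The hard part will be carrying out this cancellation analysis and paraproduct decomposition purely from Calder\'on--Zygmund-type kernel estimates for $\riz$ with respect to $dm_\lambda$, since the Fourier transform and complex analyticity that underlie the classical Ferguson--Lacey and Lacey--Petermichl--Pipher--Wick arguments are unavailable in this setting. A further subtlety is the rectangle geometry in $(\R_+\times\R_+,\,d\mu_\lambda)$: the measure $d\mu_\lambda$ is doubling but not translation-invariant, so a Journ\'e-type covering lemma adapted to $d\mu_\lambda$ is needed in order to pass from rectangle-tested product $\bmo_{\Delta_\lambda}$ quantities to the full $H^{1}_{\Delta_\lambda}$-$\bmo_{\Delta_\lambda}$ pairing. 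Once these ingredients --- the kernel estimates for $\riz$ and the product-Bessel Hardy-space/Journ\'e framework --- are in hand, the final bound follows by combining the bilinear estimate above with the duality pairing.
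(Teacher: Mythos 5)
Your dualization is correct: after expanding $[[b,\rizo],\rizt]f$ and transferring the operators to $g$ via their adjoints, one indeed obtains $\langle [[b,\rizo],\rizt]f, g\rangle = \langle b, \Psi(f,g)\rangle$, and if one could prove the bilinear estimate $\|\Psi(f,g)\|_{H^{1}} \lesssim \|f\|_{L^2(\rlz)}\|g\|_{L^2(\rlz)}$ (with $H^1$ the product Hardy space associated with $\Delta_\lambda$), the theorem would follow by product $H^1$--$\bmo$ duality. This is a genuinely different route from the paper: the paper proves the more general Theorem~\ref{UpperBdThm} for Calder\'on--Zygmund operators on product spaces of homogeneous type by representing each $T_i$ as an average of dyadic Haar shifts (Theorem~\ref{thm repre}), decomposing the commutator with each Haar shift into dyadic paraproducts (Theorem~\ref{one-para decomp} and its iteration), and bounding each paraproduct by $\|b\|_{\bmo}$ (Lemmas~\ref{lem Bk P} and~\ref{lem bipara}). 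The dyadic route never needs Journ\'e's lemma or the continuous product square-function characterization of $H^1$ in the Bessel setting.

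However, your sketch of the bilinear estimate --- which is equivalent in strength to the theorem --- contains a substantive error at the crucial step. You claim that after applying the product area function and expanding via a Calder\'on reproducing formula, the four-term structure makes ``diagonal'' pieces drop out, ``leaving bilinear paraproduct-type terms that factor essentially as products of one-parameter Littlewood--Paley objects in each variable,'' to be controlled by tensored one-dimensional Bessel area functions and Cauchy--Schwarz. This is not the case: the paraproducts that arise from a symbol in the genuine product space $\bmo_{\Delta_\lambda}(\rlz)$ are inherently two-parameter objects that do not factor into tensors of one-parameter pieces --- indeed $\bmo_{\Delta_\lambda}(\rlz)$ strictly contains $\lbmo(\rlz)$ (Corollary~\ref{coro}), so one cannot reduce to separately one-parameter symbols. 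The mixed trilinear paraproducts such as $PP$, $BP_k$, and $PB_l$ in Lemma~\ref{lem bipara} have no tensor factorization, and their $L^2$-boundedness requires the genuinely multi-parameter John--Nirenberg inequality of Lemma~\ref{lem bipara-2}, not one-parameter square-function estimates applied coordinatewise. Tracking the cancellations across the four terms of $\Psi$ and bounding the resulting two-parameter paraproducts is exactly the content of the paper's argument, and the tensored-area-function-plus-Cauchy--Schwarz step you propose would miss the hardest, genuinely multi-parameter, pieces.
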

For simplicity we only state the result for the case of two iterations; though the proof we provide works just as well for any number of parameters.

The proof strategy we employ to show this result is now the standard way to prove upper bounds for commutator estimates, see for example \cite{LPPW, LPPW2} and \cite{DO} in the Euclidean setting.  We express the Riesz transforms as averages of Haar shift type operators and then study the boundedness of the commutator with each Haar shift.  These can be broken into paraproduct operators for which the boundedness follows by the $\rm BMO$ assumption.  The main novelty in this proof is that we actually demonstrate a more general result by showing that a version of the above Theorem holds in product spaces of homogeneous type $X_1\times X_2$ in terms of the product BMO space $\bmo(X_1\times X_2)$ (for the definition, we refer to Section 2, see also Definition 2.6 in \cite{DLWY}).  We provide a statement of the main result in this direction as follows,
which will be proved in Section \ref{sec:representation}.
\begin{thm}\label{UpperBdThm}
Let $(X_i,\rho_i, \mu_i)$ be a space of homogeneous type.  Let $T_i$ be the Calder\'on--Zygmund operator on $X_i$ and let $b\in \bmo(X_1\times X_2)$. Then we have
\begin{align*}
\| [[b, T_1], T_2] \|_{L^2(X_1\times X_2, \mu_1\times\mu_2)\to L^2(X_1\times X_2, \mu_1\times\mu_2)} \leq C \|b\|_{\bmo(X_1\times X_2)}.
\end{align*}
\end{thm}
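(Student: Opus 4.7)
\medskip

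The plan is to follow the dyadic representation strategy pioneered by Hyt\"onen in the Euclidean setting and extended to spaces of homogeneous type by Auscher--Hyt\"onen and Nazarov--Reznikov--Volberg. First, I would fix an adjacent system of randomized dyadic grids $\{\D^i_\omega\}_{\omega\in\Omega_i}$ on each factor $X_i$, in the sense of Hyt\"onen--Kairema, together with a probability measure on the parameter space $\Omega_i$. Then I would invoke the dyadic representation theorem to express each Calder\'on--Zygmund operator as
\begin{equation*}
T_i f \;=\; C_i\,\E_{\omega_i}\sum_{m_i,n_i\ge 0}\tau(m_i,n_i)\,S^{m_i,n_i}_{\omega_i} f,
\end{equation*}
where $S^{m_i,n_i}_{\omega_i}$ is a dyadic Haar shift of complexity $(m_i,n_i)$ adapted to the grid $\D^i_{\omega_i}$, and the coefficients $\tau(m_i,n_i)$ have polynomial decay in $\max(m_i,n_i)$ that is summable after absorbing the expected growth of the shift norms. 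Inserting this representation into $[[b,T_1],T_2]$, the bilinearity in the pair $(T_1,T_2)$ lets me reduce matters to proving, uniformly in the complexity parameters and uniformly in the choice of grids, the estimate
\begin{equation*}
\bigl\|\,[[b,S^{m_1,n_1}_{\omega_1}],S^{m_2,n_2}_{\omega_2}]\,\bigr\|_{L^2(X_1\times X_2)\to L^2(X_1\times X_2)}
\;\lesssim\; P(m_1,n_1,m_2,n_2)\,\|b\|_{\bmo(X_1\times X_2)},
\end{equation*}
with $P$ of at most polynomial growth in the complexities.

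Next I would handle the commutator with a tensor product of Haar shifts by means of a product paraproduct decomposition. On each factor $X_i$, the Haar basis adapted to $\D^i_{\omega_i}$ is used to split the pointwise product $bf$ into the sum of three canonical paraproducts (a ``up/up'', ``up/down'' and ``diagonal'' piece) plus a remainder identity; taking tensor products produces nine product-paraproduct-type terms in the two-parameter setting. Expanding $[[b,S_1],S_2]$ by repeated use of the identity $[b,T]f = bTf - T(bf)$ and substituting the paraproduct expansion on both sides, the commutator becomes a finite linear combination of operators of the form $S_2\circ \Pi^{(1)}_b\circ S_1 - \Pi^{(1)}_b\circ S_1\circ S_2$ and their analogues, where $\Pi^{(j)}_b$ is a one- or two-parameter paraproduct with symbol $b$. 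Each such term can then be estimated by combining: (i) the $L^2$-boundedness of the Haar shifts with operator norm growing at most linearly in complexity; (ii) the fact that one-parameter paraproducts are bounded on $L^2(X_i)$ by $\|b\|_{\bmo(X_i)}$; and (iii) the product paraproduct bound $\|\Pi_b\|_{L^2(X_1\times X_2)\to L^2(X_1\times X_2)}\lesssim \|b\|_{\bmo(X_1\times X_2)}$, which is the genuine product BMO ingredient and is the only step where the full product BMO assumption on $b$ is used.

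The hardest step will be item (iii), the product paraproduct estimate in this bi-parameter space of homogeneous type setting, together with the careful bookkeeping of how polynomial loss in the complexities $(m_i,n_i)$ interacts with the decay of $\tau(m_i,n_i)$ after integrating in $\omega_1,\omega_2$. The product paraproduct bound rests on a Chang--Fefferman/Journ\'e-type $H^1$--$\bmo$ duality in the geometrically doubling setting, for which I would use the definition of $\bmo(X_1\times X_2)$ from Section~2 (cf.\ Definition~2.6 in \cite{DLWY}) and the corresponding $H^1$ atomic/square function theory on product spaces of homogeneous type. Once the paraproduct bounds are in place, one puts the pieces together: fix $\omega_1,\omega_2$, sum the geometric series in $(m_1,n_1,m_2,n_2)$ using the decay of $\tau\cdot\tau$ against the polynomial loss, then take expectations in $\omega_1,\omega_2$ to recover the Calder\'on--Zygmund operators $T_1,T_2$. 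This yields the desired inequality and, specializing $X_i=(\R_+, |\cdot|, dm_\lambda)$ and $T_i=R_{\Delta_\lambda,i}$, immediately gives Theorem~\ref{thm main1}.
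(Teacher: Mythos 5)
Your overall strategy — dyadic representation of $T_i$ as averages of Haar shifts, a Haar/paraproduct decomposition of the pointwise product, uniform $L^2$ bounds for the resulting paraproduct pieces with product BMO symbol, then summation in the complexities and expectation in the random grids — is exactly the route the paper takes, following Dalenc--Ou \cite{DO} adapted to spaces of homogeneous type. However, there is a genuine gap in the middle step that your sketch does not close.

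You propose to substitute the classical three-piece paraproduct decomposition of $bf$ on both sides of $[b,S]f = bSf - S(bf)$ and then bound each resulting piece individually, asserting in item (ii) that ``one-parameter paraproducts are bounded on $L^2(X_i)$ by $\|b\|_{\bmo(X_i)}$''. This is false for one of the three pieces. In the decomposition $bf = \sum_I \langle b,h_I\rangle\langle f\rangle_I h_I + \sum_I \langle b\rangle_I \langle f,h_I\rangle h_I + \sum_I\langle b,h_I\rangle\langle f,h_I\rangle \chi_I/\mu(I)$, the first and third terms are bounded on $L^2$ with $\|b\|_{\bmo}$ (they are a paraproduct and its adjoint), but the middle term, with $b$ in the \emph{averaging} position, requires $b\in L^\infty$ and is not bounded with only $b\in\bmo$. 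So the individual terms $A_2(b,Sf)$ and $S(A_2(b,f))$ that appear after substitution are each unbounded; only their \emph{difference} is controllable. Extracting that cancellation requires a telescoping argument: writing $\langle b\rangle_J - \langle b\rangle_I$ as a telescoping sum of $\langle b, h_{I^{(k)}}\rangle$ over ancestors $I^{(k)}$, $0\leq k\leq \max(m,n)$, which is precisely what produces the generalized paraproducts $B_k(b,f)$ with $b$ tested at scale $I^{(k)}$ in the paper's Theorem~\ref{one-para decomp}, and is also the source of the crucial factor $C(1+\max(m,n))$ counting the number of resulting terms. Without this step, your ``finite linear combination of $S_2\circ\Pi_b^{(1)}\circ S_1 - \Pi_b^{(1)}\circ S_1\circ S_2$'' does not decompose into pieces that are each bounded by $\|b\|_{\bmo}$.

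Two smaller points. First, the dyadic representation theorem in this setting (Theorem~\ref{thm repre}) gives \emph{geometric} decay $\tau(m,n)\lesssim \delta^{m+n}$, not merely polynomial decay as you state; this matters because the telescoping above introduces polynomial growth in $\max(m_i,n_i)$, and geometric decay is what makes the double sum converge cleanly. Second, your sketch never addresses the noncancellative Haar shifts $S^{0,0}_\omega$ (the paraproduct component of the representation); these require a separate treatment involving the trilinear operators $P(b,a,f)$ of \eqref{P}, where $a$ is the symbol of the noncancellative shift, and their $L^2$ bound uses both $\|a\|_{\bmo}$ and $\|b\|_{\bmo}$. The product paraproduct bound you flag as the hardest step — item (iii) — is indeed where product BMO enters, and the paper handles it via the multiparameter John--Nirenberg inequality (Lemma~\ref{lem bipara-2}) rather than a direct Chang--Fefferman $H^1$--$\bmo$ duality, though those are closely related.
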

For precise definitions of the product spaces of homogeneous type, the product BMO space, and Calder\'on--Zygmund operators, we refer to Section 2,  see also \cite{HLW}.  Since we have that $\mathbb{R}_\lambda$ is a space of homogenous type, it is clear that Theorem \ref{thm main1} follows from the above theorem as a corollary.

%
%
%
%

\bigskip

The second main result of this paper is characterization of the little bmo space associated with $\Delta_\lz$, $\lbmo(\rlz)$, which is the space of functions satisfying the following definition.

\begin{defn}
A function $b\in L^1_{loc}(\rlz)$ is in $\lbmo(\rlz)$ if
\begin{align}
\|b\|_{\lbmo(\rlz)}:= \sup_{R\subset \R_+\times\R_+} \frac1{\mu_\lz(R)}\iint_R |b(x_1,x_2)-m_R(b)|d\mu_\lz(x_1,x_2) <\infty,
\end{align}
where
\begin{equation}\label{mean valu rect}
 m_R(b) := \frac1{\mu_\lz(R)}\iint_R b(x_1,x_2) d\mu_\lz(x_1,x_2)
\end{equation}
is the mean value of $b$ over the rectangle $R$.
\end{defn}
One can easily observe that this norm is equivalent to the following norm:
$$
\left\Vert b\right\Vert_{ \lbmo(\mathbb{R}_\lambda)} \approx \max \left\{\sup_{x\in\mathbb{R}_+} \left\Vert b(x,\cdot)\right\Vert_{\bmo_{\Delta_{\lambda}}(\mathbb{R}_{+},\dmz)}, \sup_{y\in\mathbb{R}_+} \left\Vert b(\cdot,y)\right\Vert_{\bmo_{\Delta_{\lambda}}(\mathbb{R}_{+},\dmz)}\right\};
$$
namely these functions are uniformly in $\bmo_{\Delta_{\lambda}}(\mathbb{R}_{+},\dmz)$ in each variable separately.  This leads to the following characterization of $\lbmo(\rlz)$:

\begin{thm}\label{thm main2}
\label{1.1}
Let $b\in  L^2_{loc}(\rlz)$. The following conditions are equivalent:
\begin{enumerate}
\item[(i)] $b\in $ \lbmo$(\rlz)$;
\item[(ii)] The commutators $[b,R_{\Delta_\lambda,1}]$ and $[b,R_{\Delta_\lambda,2}]$ are both bounded on $L^2(\rlz)$;
\item[(iii)] There exist $f_1,f_2,g_1,g_2\in L^\infty(\rlz)$ such that $b=f_1+\rizo g_1=f_2+\rizt g_2$ and moreover,
$\|b\|_{\lbmo(\rlz)}\approx \inf\Big\{ \max_{i=1,2} \big\{\|f_i\|_{L^\infty(\rlz)},\|g_i\|_{L^\infty(\rlz)}\big\} \Big\}$, where the infimum is taken over all possible decompositions of
$b$;
\item[(iv)] The commutator $[b,R_{\Delta_\lambda,1}R_{\Delta_\lambda,2}]$ is bounded on $L^2(\rlz)$.
\end{enumerate}
\end{thm}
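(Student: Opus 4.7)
The plan is to prove the four conditions equivalent by establishing the cycle (i) $\Rightarrow$ (ii) $\Rightarrow$ (i), together with (i) $\Leftrightarrow$ (iii), and then handling (iv) separately via the identity $[b,\rizo\rizt]=[b,\rizo]\rizt+\rizo[b,\rizt]$ for one direction and a testing argument for the other. The first step is to record the equivalent norm for $\lbmo(\rlz)$ already observed after the definition: membership in $\lbmo$ is the same as being uniformly in $\bmo_{\Delta_\lz}(\R_+,\dmz)$ in each variable separately. This reduces (i) $\Rightarrow$ (ii) to a one--parameter estimate: for fixed $x_2$, $b(\cdot,x_2)$ lies in one-parameter Bessel $\bmo$, so by the known Coifman--Rochberg--Weiss type theorem in the Bessel setting (see the references cited after the Bessel introduction, e.g.\ \cite{DLWY}), the commutator $[b(\cdot,x_2),\riz]$ is bounded on $L^2(\R_+,\dmz)$ with norm $\lesssim \|b\|_{\lbmo(\rlz)}$, and Fubini then gives boundedness of $[b,\rizo]$ on $L^2(\rlz)$; symmetrically for $[b,\rizt]$.

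For (ii) $\Rightarrow$ (i) the key idea is a one--parameter slicing argument combined with the lower bound for the one--parameter Bessel commutator. Fix a rectangle $R=I_1\times I_2$; I would test the commutator $[b,\rizo]$ against functions of the form $f(x_1,x_2)=g_{I_1}(x_1)\chi_{I_2}(x_2)$, where $g_{I_1}$ is a properly normalized piece chosen (as in the classical one--parameter Bessel argument for the lower bound) to produce the mean oscillation of $b(\cdot,x_2)$ on $I_1$ after integrating against a companion kernel on a far--away interval of comparable $\mu_\lz$--measure. Integrating in $x_2\in I_2$ and using boundedness of $[b,\rizo]$ yields control of the average over $I_2$ of the one--parameter bmo--type oscillation of $b(\cdot,x_2)$ on $I_1$. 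Doing the same with $[b,\rizt]$ and combining (using the standard inequality $|b(x_1,x_2)-m_R b|\le |b(x_1,x_2)-\langle b(\cdot,x_2)\rangle_{I_1}|+|\langle b(\cdot,x_2)\rangle_{I_1}-m_R b|$, the latter term being bounded by the bmo oscillation in the second variable) yields $\|b\|_{\lbmo(\rlz)}\lesssim \|[b,\rizo]\|_{L^2\to L^2}+\|[b,\rizt]\|_{L^2\to L^2}$.

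For (i) $\Leftrightarrow$ (iii) I would identify the predual of $\lbmo(\rlz)$. The natural candidate is a Hardy space $h^1(\rlz)$ defined via atoms that are sums of products supported on rectangles (the atomic analogue appropriate to little bmo, not the Chang--Fefferman product $H^1$). Once the duality $(h^1(\rlz))^*=\lbmo(\rlz)$ is established, a weak factorization for $h^1$ in terms of pointwise products $f\cdot \riz g$ (via a constructive Uchiyama--type scheme adapted to the Bessel Riesz transforms $\rizo,\rizt$) translates by Hahn--Banach into the decomposition (iii). This is the same strategy as in \cite{DLWY3} in the classical product setting, transplanted to the Bessel Riesz kernels defined in \eqref{riz}; the extra ingredients needed are the Calder\'on--Zygmund kernel bounds and $L^2$--boundedness for $\rizo,\rizt$ on $\rlz$, which are already available in the Bessel literature cited above.

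Finally, (ii) $\Rightarrow$ (iv) follows from $[b,\rizo\rizt]=[b,\rizo]\rizt+\rizo[b,\rizt]$ and the $L^2(\rlz)$--boundedness of $\rizo,\rizt$. For the reverse, (iv) $\Rightarrow$ (i), I would again use a rectangle testing argument: for a rectangle $R=I_1\times I_2$, test $[b,\rizo\rizt]$ against $\chi_{I_1\times I_2}$ and pair with a function supported on a shifted rectangle $\widetilde I_1\times \widetilde I_2$ at controlled distance, chosen so that on the support of the test functions the kernel of $\rizo\rizt$ (which is a product of two one--variable Bessel Riesz kernels) has nearly constant sign and size of order $(\mu_\lz(R))^{-1}$ on the product; this reproduces the mean oscillation $\frac{1}{\mu_\lz(R)}\iint_R |b-m_R b|\,d\mu_\lz$ up to a constant, giving $\|b\|_{\lbmo(\rlz)}\lesssim \|[b,\rizo\rizt]\|_{L^2\to L^2}$. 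The main obstacle I expect is the lower bound in (ii) $\Rightarrow$ (i) and its analogue for (iv), because in the Bessel setting we cannot rely on analyticity or Fourier methods; the correct substitute is a careful analysis of the sign and off--diagonal behavior of the explicit Bessel Riesz kernel in \eqref{riz}, following the one--parameter lower bound technique and promoting it to the product setting rectangle by rectangle.
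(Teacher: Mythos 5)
Your treatment of (i)$\Leftrightarrow$(ii), via slicing in each variable separately and invoking the one--parameter Bessel commutator characterization of \cite{DLWY} together with the equivalent form of the $\lbmo$ norm, matches the paper's proof, as does the derivation of (iv) from (ii) through the identity $[b,\rizo\rizt]=\rizo[b,\rizt]+[b,\rizo]\rizt$. The two remaining pieces, however, diverge from the paper in ways that matter.

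For (i)$\Leftrightarrow$(iii) the mechanism you propose does not produce the decomposition. A weak factorization of $h^1(\rlz)$ in terms of a bilinear form like $\Pi(g,h)=g\,\rizo\rizt h-h\,\wrizo\wrizt g$ yields, upon pairing with $b$, the identity $\langle b,\Pi(g,h)\rangle=\langle [b,\rizo\rizt]h,g\rangle$ -- i.e.\ it proves commutator lower bounds, which is precisely what the paper uses it for (in the proof of (iv)$\Rightarrow$(i)). It does not give the Fefferman--Stein type decomposition $b=f_i+R_{\Delta_\lz,i}g_i$. That decomposition requires a different input: the characterization of the one--parameter $H^1(\R_+,\dmz)$ by the condition $f,\riz f\in L^1$ (from \cite{bdt}), the ensuing embedding of $H^1$ into $L^1\oplus L^1$, and Hahn--Banach, following \cite{fs}; this yields $b=f+\riz g$ in one parameter, which then slices across the second variable using \eqref{bmo norm equiv}. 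Conflating weak factorization with the Riesz--transform characterization of $H^1$ is a genuine gap here.

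For (iv)$\Rightarrow$(i) you propose a direct rectangle testing argument in the style of Coifman--Rochberg--Weiss, pairing $[b,\rizo\rizt]\chi_R$ against a test function on a shifted rectangle $\wz R$ on which the kernel has ``nearly constant sign and size of order $\mu_\lz(R)^{-1}$.'' The paper instead builds the whole predual theory: it introduces $(1,q)$-rectangular atoms with a single cancellation, proves the nontrivial fact that $h^{1,q}(\rlz)$ and $h^{1,\infty}(\rlz)$ coincide (Theorem~\ref{t-atm Hard q indepen}, via a Whitney/Calder\'on--Zygmund induction), establishes the $h^1$--$\lbmo$ duality, proves the weak factorization Theorem~\ref{thm: weak factotrization}, and then deduces the lower bound by duality. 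Your shifted-rectangle intuition does appear inside the paper's proof of Theorem~\ref{thm:recfac}, but not as a direct testing argument: the Bessel Riesz kernel does not have a single uniform lower bound of size $\mu_\lz(R)^{-1}$; it exhibits three distinct regimes (Proposition~\ref{p-estimate of riesz kernel}), and the shift $\wz R$ must be chosen differently depending on whether $x_{0,i}\le 2\wz M r_i$ or $x_{0,i}>2\wz M r_i$, giving the four cases (a)--(d) in the paper. Moreover, your sketch would need to show that pairing $[b,\rizo\rizt]\chi_R$ against a shifted test function actually controls the full mean oscillation $\frac1{\mu_\lz(R)}\iint_R|b-m_Rb|$, which is not immediate from the commutator's kernel expansion and is exactly the step the duality/weak factorization framework is designed to handle cleanly. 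As sketched, the direct route has substantial gaps; it is not the argument the paper uses, and you would need to work out the kernel case analysis and the passage from a single pairing to the oscillation before it could be considered a proof.
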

The proof of the equivalence between (i) and (ii) in this theorem, relies on a recent new result obtained by a subset of authors in \cite{DLWY}, which shows that in the one parameter setting $b\in \bmo(\R_+,\dmz)$ if and only if the commutator
$[b, R_{\Delta_\lz}]$ is a bounded operator on $L^2(\R_+,\dmz)$.

Moreover, the proof of the equivalence between (i) and (iv) extends the result of Ferguson--Sadosky \cite{fs} to the Bessel setting,
where no analyticity or Fourier transform is available.
%
%
We prove this characterization by understanding a certain weak factorization of the predual of $\lbmo(\rlz)$. To obtain this, we first define the little Hardy space $h^{1,\infty}(\rlz)$ in terms of $(1,\infty)$-rectangular atoms with a one-parameter version of cancellation. However, it is less direct to see how the duality works by using only $(1,\infty)$-rectangular atoms. We also introduce the $(1,q)$-rectangular atoms for $1<q<\infty$, and then prove that $h^{1,\infty}(\rlz)$ can be characterised equivalently by $(1,q)$-rectangular atoms. Then, by using the $(1,2)$-rectangular atoms, the duality of $h^{1,\infty}(\rlz)$ with $\lbmo(\rlz)$ follows from the  standard argument, see for example \cite{CW2} (see also \cite[Section II, Chapter 3]{J}). This factorization particularly uses key estimates on the kernel of the Riesz transforms, especially the lower bound conditions, which was studied in \cite{bfbmt} and refined recently by the subset of authors \cite{DLWY}; these estimates are essentially different from the standard Riesz transforms on $\mathbb R^n$. We point out that the characterizations of the little Hardy space in terms of $(1,q)$-rectangular atoms are new even when we refer back to the classical case of Ferguson--Sadosky \cite{fs}.

Finally as a corollary of the characterization of $\lbmo(\rlz)$  in Theorem \ref{thm main2} and the Fefferman--Stein type decomposition of $\bmo(\rlz)$ as proved in \cite{DLWY2}, we show that:

\begin{cor}\label{coro} $\lbmo(\rlz)$ is a proper subspace of $\bmo_{\Delta_\lambda} (\rlz)$, i.e.,
$$\lbmo(\rlz)\subsetneq \bmo_{\Delta_\lambda} (\rlz).$$
\end{cor}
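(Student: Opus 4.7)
The plan is to split the proof into the inclusion $\lbmo(\rlz) \subseteq \bmo_{\Delta_\lambda}(\rlz)$ and the verification that this inclusion is strict.

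\textbf{Inclusion.} Given $b\in \lbmo(\rlz)$, the equivalence (i)$\Leftrightarrow$(iii) of Theorem~\ref{thm main2} lets us write $b = f + \rizo g$ with $f, g \in L^\infty(\rlz)$. The Fefferman--Stein type decomposition of $\bmo_{\Delta_\lambda}(\rlz)$ established in \cite{DLWY2} asserts that every function of the form $f_0 + \rizo f_1 + \rizt f_2 + \rizo \rizt f_{12}$, with $f_j \in L^\infty(\rlz)$, belongs to $\bmo_{\Delta_\lambda}(\rlz)$. Taking $f_0=f$, $f_1=g$, $f_2=f_{12}=0$ therefore yields $b \in \bmo_{\Delta_\lambda}(\rlz)$, with $\|b\|_{\bmo_{\Delta_\lambda}(\rlz)} \ls \|b\|_{\lbmo(\rlz)}$.

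\textbf{Strict containment.} We exhibit an explicit element of $\bmo_{\Delta_\lambda}(\rlz)\setminus \lbmo(\rlz)$ built from the purely bi-parameter part of the Fefferman--Stein decomposition. Choose $\phi\in L^\infty(\R_+, dm_\lz)$ such that $R_{\Delta_\lambda}\phi \notin L^\infty(\R_+, dm_\lz)$; a concrete choice is $\phi = \mathbf{1}_{(0,1)}$, whose Bessel Riesz transform exhibits logarithmic blow-up at the endpoint $x=1$, as can be read off from the kernel formula \eqref{riz} using the sharp pointwise estimates of \cite{bfbmt,DLWY}. Set $\Phi(x_1,x_2) := \phi(x_1)\phi(x_2) \in L^\infty(\rlz)$ and $b := \rizo\rizt \Phi$. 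The Fefferman--Stein decomposition from \cite{DLWY2} immediately gives $b \in \bmo_{\Delta_\lambda}(\rlz)$. Since $\rizo$ and $\rizt$ act on different variables, the tensor structure yields $b(x_1,x_2) = R_{\Delta_\lambda}\phi(x_1)\, R_{\Delta_\lambda}\phi(x_2)$. Using the norm equivalence for $\lbmo(\rlz)$ noted immediately after its definition and the fact that for each fixed $x_2$ the function $b(\cdot, x_2)$ is a constant multiple of $R_{\Delta_\lambda}\phi$, we obtain
\begin{equation*}
\|b\|_{\lbmo(\rlz)} \gs \sup_{x_2 \in \R_+} \|b(\cdot,x_2)\|_{\bmo_{\Delta_\lambda}(\R_+, dm_\lz)} = \|R_{\Delta_\lambda}\phi\|_{L^\infty(\R_+,dm_\lz)}\, \|R_{\Delta_\lambda}\phi\|_{\bmo_{\Delta_\lambda}(\R_+, dm_\lz)} = +\infty,
\end{equation*}
where the last equality uses that $R_{\Delta_\lambda}\phi \notin L^\infty$ while $R_{\Delta_\lambda}\phi$ is not a.e.\ constant (any constant lies in $L^\infty$). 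Hence $b \notin \lbmo(\rlz)$, completing the strictness.

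\textbf{Main obstacle.} The core technical point is producing a bounded $\phi$ with $R_{\Delta_\lambda}\phi \notin L^\infty(\R_+, dm_\lz)$; in the Bessel setting neither Fourier analysis nor analyticity is available, which is precisely the feature that obstructs the Cotlar--Sadosky / Ferguson--Sadosky approach. This is overcome by exploiting the explicit pointwise kernel estimates of the Bessel Riesz transform from \cite{bfbmt}, refined in \cite{DLWY}, from which the logarithmic blow-up of $R_{\Delta_\lambda}\mathbf{1}_{(0,1)}$ near $x=1$ can be extracted directly.
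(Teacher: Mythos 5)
Your overall strategy coincides with the paper's: the inclusion is obtained by combining (iii) of Theorem~\ref{thm main2} with the Fefferman--Stein type decomposition of $\bmo_{\Delta_\lambda}(\rlz)$ from \cite{DLWY2}, and the strictness is obtained by applying $\rizo\rizt$ to a tensor-product indicator, exploiting the factorization $\rizo\rizt(\phi\otimes\phi)=(\riz\phi)\otimes(\riz\phi)$ and the norm equivalence for $\lbmo$ to reduce the matter to showing $\riz\phi\notin L^\infty(\R_+,\dmz)$. The inclusion part is correct and is essentially identical to the paper's.

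There is, however, a genuine gap in the strictness part: you assert that for $\phi=\mathbf{1}_{(0,1)}$ the blow-up of $\riz\phi$ near $x=1$ ``can be read off'' from the lower bound estimates of \cite{bfbmt,DLWY}, but those estimates (Proposition~\ref{p-estimate of riesz kernel}) do not cover the configuration your choice forces. The near-diagonal lower bound (iii) applies only when $y>x$, i.e.\ $0<y/x-1<K_3$; the ranges $K_2x\le y\le x$ and $(1+K_3)x\le y\le K_1x$ are not addressed. For $\phi=\mathbf{1}_{(0,1)}$ and $x>1$ the integrand lives entirely in $y<x$, and for the part of $y$ near $x$ you would need a lower bound with $y$ just below $x$, which is not available from the cited proposition. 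For $x<1$ the integral is a genuine principal value (the singularity $y=x$ is interior to $(0,1)$), and concluding blow-up would require controlling the cancellation between the two sides of the diagonal, which again the one-sided estimate (iii) does not give. The paper deliberately uses $\chi_{(1,2)}$ and lets $x_1\to 1^-$ precisely so that the integration range lies strictly to the right of $x_1$: the integral $\int_1^2\riz(x_1,y)\,\dmz(y)$ is then non-singular, and (iii) applies on $(x_1+\epsilon,(1+K_3)x_1)\subset(1,2)$ to produce the logarithmic divergence $\ln(K_3x_1)-\ln(2\delta)\to+\infty$. Replacing $\mathbf{1}_{(0,1)}$ by $\chi_{(1,2)}$ in your argument closes the gap and makes your proof identical in substance to the paper's.
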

Again, this is in analogy with the corresponding results in the Euclidean setting.  Containment of the spaces follows from property (iii) and a similar characterization of product $\rm BMO$ in this setting.  The fact that it is a proper containment follows from a simple construction.  These results, as well as corollaries about the relevant factorizations, can be found in Section~\ref{sec:factorization}.

A natural question that arises from this work is whether the space $\bmo_{\Delta_\lambda} (\rlz)$ can be characterized by the iterated commutators:
$$
\|[ [b, R_{\Delta_\lambda,1}],R_{\Delta_\lambda,2}] \|_{L^2(\rlz)\to L^2(\rlz)} \approx  \|b\|_{\bmo_{\Delta_\lambda} (\rlz)}.
$$
As evidence for this we point out that in the case of one parameter this result was answered by a subset of the authors in \cite{DLWY}; and it was shown that the space $\bmo_{\Delta_\lambda} (\rlz)$ can indeed be characterized by the commutator.  We also point out that using the methods of Section \ref{sec:factorization} it is possible to obtain a lower bound on the iterated commutator in terms of a ``rectangle $\bmo_{\Delta_\lambda} (\rlz)$''.  While we would like to return to this characterization in subsequent work, we want to point out some challenges with obtaining the lower bound.  The analogous proof in the Euclidean spaces, \cite{fl,LPPW}, uses key properties of the Fourier transform, the Riesz/Hilbert transforms and wavelets.  Some of these tools do not translate well to the setting at hand and instead a new proof seems to be needed.



\section{Upper bound of  iterated commutator $[ [b, T_1],T_2] $}
\label{sec:representation}
\setcounter{equation}{0}

In this section we prove Theorem \ref{UpperBdThm}, which extends the main result of \cite{DO} to spaces of homogeneous type introduced by Coifman and Weiss \cite{CW2}.
We first recall some necessary notation and definitions on spaces of homogeneous type, including the product Calder\'on--Zygmund operators and product BMO space on space of homogeneous type as well as some fundamental tools such as the Haar basis and representation theorem, which will be crucial to the proof of Theorem \ref{UpperBdThm}.

%

\subsection{Preliminaries}
\label{sec:dyadic_systems}

%
By a quasi-metric we mean a mapping $\rho\colon X\times X\to
[0,\infty)$ that satisfies the axioms of a metric except for
the triangle inequality which is assumed in the weaker form
\begin{align}\label{quasimetric}
    \rho(x,y)
    \leq A_0(\rho(x,z)+\rho(z,y))
    \quad \text{for all $x,y,z\in X$}
\end{align}
with a constant $A_0\geq 1$.

We define the quasi-metric ball by $B(x,r) := \{y\in X: \rho(x,y)
< r\}$ for $x\in X$ and $r > 0$.
We say that a nonzero measure $\mu$ satisfies the
\emph{doubling condition} if there is a constant $C_\mu$ such
that for all $x\in\XX$ and $r > 0$,
\begin{equation}\label{eqn:doubling condition}
   \mu(B(x,2r))
   \leq C_\mu \mu(B(x,r))
   < \infty.
\end{equation}


We recall that $(X,d,\mu)$ is a space of homogeneous type in the sense of Coifman and Weiss \cite{CW2} if $d$ is a quasi-metric and $\mu$ is a nonzero measure satisfying the doubling condition.

We also denote the product
space
\begin{align}\label{X}
\X:= (X_1,d_1,\mu_1)\times (X_2,d_2,\mu_2),
\end{align}
where for each $i:=1,2$, the space $(X_i,d_i,\mu_i)$ is a space of homogeneous type, with
the coefficient $A_{0,i}$ for the quasi-metric $d_i$ as in \eqref{quasimetric} and with the
coefficient $C_{\mu_i}$  
for the measure $\mu_i$ as in  \eqref{eqn:doubling condition}, respectively.


We now recall the BMO and product BMO spaces on general spaces of homogeneous type.   The case of one parameter is the following, expected definition.

\begin{defn}
A locally integrable function $f$ is in $\bmo(X)$ if and only if
\begin{align}
\|f\|_{\bmo(X)}:={1\over\mu(B)}\int_B |f(x)-f_B|d\mu(x)<\infty,
\end{align}
where $f_B:= \mu(B)^{-1} \int_B f(y)d\mu(y)$, and $B$ is any quasi-metric ball in $X$.
\end{defn}

For the case of product BMO we need to introduce wavelets on spaces of homogeneous type.  To begin with, recall the set $\{x_\alpha^k\}$ of {\it reference dyadic
points} as follows. Let $\delta$ be a fixed small positive
parameter (for example, as noted in \cite[Section 2.2]{AuHyt},
it suffices to take $\delta\leq 10^{-3} A_0^{-10}$). For $k =
0$, let $\mathscr{X}^0 := \{x_\alpha^0\}_\alpha$ be a maximal
collection of 1-separated points in~$\XX$. Inductively, for
$k\in\mathbb{Z}_+$, let $\mathscr{X}^k := \{x_\alpha^k\}
\supseteq \mathscr{X}^{k-1}$  and $\mathscr{X}^{-k} :=
\{x_\alpha^{-k}\} \subseteq \mathscr{X}^{-(k-1)}$ be
$\delta^k$- and $\delta^{-k}$-separated collections in
$\mathscr{X}^{k-1}$ and $\mathscr{X}^{-(k-1)}$, respectively.

As shown in \cite[Lemma 2.1]{AuHyt}, for all $k\in\mathbb{Z}$ and
$x\in X$, the reference dyadic points satisfy
\begin{eqnarray}\label{delta sparse}
    d(x_\alpha^k,x_\beta^k)\geq\delta^k\ (\alpha\not=\beta),\hskip1cm
        d(x,\mathscr{X}^k)
    = \min_\alpha\,d(x,x_\alpha^k)
    < 2A_0\delta^k.
\end{eqnarray}
Also, taking $c_0 := 1$, $C_0 := 2A_0$ and $\delta \leq 10^{-3}
A_0^{-10}$, we see that $c_0$, $C_0$ and $\delta$ satisfy
$12A_0^3C_0\delta\leq c_0$ in \cite[Theorem 2.2]{HK}. By applying Hyt\"onen and Kairema's
construction (\cite[Theorem 2.2]{HK}).
We conclude
that there exists a set of dyadic cubes
$
    \{Q_\alpha^k\}_{k\in\mathbb{Z},\alpha\in\mathscr{X}^k}
$
associated with the reference dyadic points
$\{x_\alpha^k\}_{k\in\mathbb{Z},\alpha\in\mathscr{X}^k}$. We
call the reference dyadic point~$x_\alpha^k$ the \emph{center}
of the dyadic cube~$Q_\alpha^k$. We also identify with
$\mathscr{X}^k$ the set of indices~$\alpha$ corresponding to
$x_\alpha^k \in \mathscr{X}^{k}$. We now denote the system of dyadic cubes as
$$ \mathscr{D}:=\bigcup_k \mathscr{D}_k,\quad {\rm with}\  \mathscr{D}_k:=\{ Q_\alpha^k:\ \alpha\in \mathscr{X}^k \}. $$

Note that $\mathscr{X}^{k}\subseteq \mathscr{X}^{k+1}$ for
$k\in\mathbb{Z}$, so that every $x_\alpha^k$ is also a point of
the form $x_\beta^{k+1}$. We denote
$\mathscr{Y}^{k}:=\mathscr{X}^{k+1}\backslash \mathscr{X}^{k}$
and relabel the points $\{x_\alpha^k\}_\alpha$ that belong
to~$\mathscr{Y}^k$ as $\{y_\alpha^k\}_\alpha$.

\begin{defn}[\cite{HLW}]\label{def-BMO}
    We define the product BMO space $\bmo(\XX_1\times\XX_2)$ in
    terms of wavelet coefficients by
    $
        \bmo(\XX_1\times\XX_2)
        := \big\{ f \in (\GGp)' : \mathcal{C}(f)< \infty\},
    $
    with the quantity $\mathcal{C}(f)$ defined as follows:
    \begin{eqnarray}\label{Carleson norm}
        \mathcal{C}(f)
        := \sup_{\Omega}
            \Bigg\{ {1\over\mu(\Omega)}
            \sum_{\substack{R = Q_{\alpha_1}^{k_1}\times Q_{\alpha_2}^{k_2}
                \subset \Omega, \\
                k_1,k_2\in\mathbb{Z}, \alpha_1\in\mathscr{Y}^{k_1},
                \alpha_2\in\mathscr{Y}^{k_2} }}
            \big| \langle \psi_{\alpha_1}^{k_1}\psi_{\alpha_2}^{k_2}, f \rangle
            \big|^2 \Bigg\}^{1/2},
    \end{eqnarray}
    where $\Omega$ runs over all open sets in $\XX_1\times\XX_2$ with
    finite measure.
\end{defn}

Here we point out that the notation $(\GGp)' $ in the definition above denotes the space of distributions in the product setting $X_1\times X_2$. We recall the test function and distribution spaces, and the one-parameter version of which was defined by Han, M\"uller and Yang \cite{hmy06,hmy}, and then the product version by Han, Li and Lu \cite{HLL2}, where the extra reverse doubling conditions of the underlying measures are required. Here we cite the definition of test functions and distributions in both the one-parameter setting and product setting in \cite{HLW}, where there is no extra assumptions on the quasi-metric and doubling measure. Moreover, the notation $\psi_{\alpha}^{k}$, $\alpha\in \mathscr{Y}^{k_1}$, denotes the orthonormal basis on general spaces of homogeneous type $(X,d,\mu)$ constructed by Auscher and Hyt\"onen (see \cite{AuHyt} Theorem 7.1).


\smallskip

Next we recall the definition for Calder\'on--Zygmund operators on spaces of homogeneous type and the representation theorems for these Calder\'on--Zygmund operators.

A continuous function $K(x,y)$
defined on  $X \times X \backslash \{ (x, y): x=y\}$ is called a
{\it Calder\'on--Zygmund kernel} if there exist constant $C>0$ and a
regularity exponent $\varepsilon\in (0,1]$ such that
\begin{itemize}
\item[(a)] $|K(x,y)|\leq C V(x,y)^{-1}$;
\item[(b)] $\displaystyle\vert K(x,y)-K(x,y')\vert + \vert K(y,x)-K(y,x')\vert\leq C
\bigg(\frac{d(y,y')}{d(x,y)}\bigg)^{\varepsilon}V(x,y)^{-1}$
             \quad if\ \ $\displaystyle d(y,y')\le \frac{d(x,y)}{2A_0}$.
\end{itemize}
Above $V(x,y):=\mu(B(x,d(x,y))$.  The smallest such constant $C$ is denoted by $|K|_{CZ}.$ We say
that an operator $T$ is a {\it singular integral operator} associated with a Calder\'on--Zygmund kernel
$K$ if the operator $T$ is a continuous linear operator from
$C^\eta_0(X)$ into its dual such that
$$\langle Tf, g \rangle=\int_X\int_X g(x)K(x,y)f(y)d\mu(y)d\mu(x)$$
for all functions $f, g\in C^\eta_0(X)$ with disjoint supports. Here
$C^\eta_0(X)$ is the space of all continuous functions on $X$ with compact support such that
$$ \|f\|_{C_0^\eta(X)}:=\sup_{x\not=y} {|f(x)-f(y)|\over d(x,y)^\eta} <\infty.$$
The operator $T$ is said to be a {\it Calder\'on--Zygmund operator} if it
extends to be a bounded operator on $L^2(X).$ If $T$ is a
Calder\'on--Zygmund operator associated with a kernel $K$, its
operator norm is defined by $\|T\|_{CZ}=\|T\|_{L^2\rightarrow
L^2}+ | K|_{CZ}$.

We now recall the explicit construction in \cite{KLPW} of a Haar basis
$\{h^u_Q\colon Q\in\D, u = 1,\dots,M_Q - 1\}$ for $L^p(X)$,
$1 < p < \infty$, associated to the dyadic cubes
$Q\in\mathscr{D}$ as follows. Here $M_Q := \#\ch(Q) = \# \{R\in
\mathscr{D}_{k+1}\colon R\subseteq Q\}$ denotes the number of
dyadic sub-cubes (``children'') the cube $Q\in \mathscr{D}_k$
has. 

\begin{thm}[\cite{KLPW}]\label{thm:convergence}
    Let $(X,\rho)$ be a geometrically doubling quasi-metric
    space and suppose $\mu$ is a positive Borel measure on $X$
    with the property that $\mu(B) < \infty$ for all balls
    $B\subseteq X$. For $1 < p < \infty$, for each $f\in
    L^p(X)$, we have
    \[
        f(x)
        =  \sum_{Q\in\mathscr{D}}\sum_{u=1}^{M_Q-1}
            \langle f,h^u_Q\rangle_{L^2(X)} h^u_Q(x), 
    \]
    where the sum converges (unconditionally) both in the
    $L^p(X)$-norm and pointwise $\mu$-almost everywhere.
 \end{thm}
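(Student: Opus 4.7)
The plan is to interpret the Haar series as a martingale difference decomposition and then apply martingale convergence theorems tailored to spaces of homogeneous type. First I would verify, from the construction of $\{h^u_Q\}$ in \cite{KLPW}, that for each dyadic cube $Q \in \mathscr{D}_k$ the family $\{h^u_Q\}_{u=1}^{M_Q-1}$ is an orthonormal basis for the subspace of $L^2(Q,\mu|_Q)$ consisting of functions constant on each child of $Q$ and having mean zero on $Q$. Introducing the conditional expectation
\[
E_k f := \sum_{Q \in \mathscr{D}_k} \frac{1}{\mu(Q)}\left(\int_Q f\, d\mu\right)\mathbf{1}_Q,
\]
a direct computation then yields
\[
E_{k+1}f - E_k f = \sum_{Q \in \mathscr{D}_k} \sum_{u=1}^{M_Q-1} \langle f, h^u_Q \rangle_{L^2(X)} h^u_Q,
\]
so the partial sum of the Haar series over levels $k_0 \leq k < k_1$ telescopes to $E_{k_1}f - E_{k_0}f$.

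Next I would invoke Doob's maximal inequality for the dyadic martingale $(E_k f)_k$: the associated maximal operator $f \mapsto \sup_k |E_k f|$ is bounded on $L^p(X)$ for $1 < p < \infty$. Combined with the Lebesgue differentiation theorem on the doubling space $(X,\rho,\mu)$, this gives $E_{k_1}f \to f$ both pointwise $\mu$-a.e.\ and in $L^p$-norm as $k_1 \to \infty$. For the lower end, H\"older's inequality yields $|\langle f \rangle_Q| \leq \mu(Q)^{-1/p}\|f\|_{L^p(X)}$, and I would use the reference dyadic structure of \cite{HK} together with the doubling condition to show that $\mu(Q) \to \infty$ along the ancestors of any fixed point (with a separate, straightforward treatment when $\mu(X)<\infty$), giving $E_{k_0}f \to 0$ pointwise a.e.\ and in $L^p$.

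Unconditional convergence in $L^p$ then follows from a randomized-sign/square-function argument: by the Burkholder--Gundy inequality for the dyadic martingale, the transforms $f \mapsto \sum_Q \sum_u \varepsilon_Q^u \langle f, h^u_Q \rangle h^u_Q$ are bounded on $L^p$ uniformly in the choice of signs $\varepsilon_Q^u \in \{\pm 1\}$, and this uniform bound, together with convergence along a single exhaustion of $\mathscr{D}$, upgrades to unconditional convergence in $L^p$. The main obstacle I anticipate is controlling the lower limit $k_0 \to -\infty$ in the possibly infinite-measure, non-Euclidean setting: showing that ancestors of a fixed point eventually have arbitrarily large measure requires genuine use of the geometric properties of the reference dyadic system and the doubling condition, rather than the simple dilation argument available in $\mathbb{R}^n$. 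The remaining steps are standard once the martingale identification is in place.
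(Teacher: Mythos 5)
Your sketch is essentially the same martingale-based argument used in the cited source \cite{KLPW} (the present paper states the theorem without reproof). The identification of the Haar partial sums with martingale differences $E_{k+1}f - E_k f$, Doob's maximal inequality for the $L^p$-boundedness and a.e.\ convergence of $E_k f$ as $k\to\infty$, the H\"older bound $|\langle f\rangle_Q|\le\mu(Q)^{-1/p}\|f\|_{L^p}$ to kill $E_{k_0}f$ as $k_0\to-\infty$, and the square-function/Burkholder--Gundy argument for unconditionality are exactly the ingredients used there. You are also right to flag the lower limit $k_0\to-\infty$ as the genuinely non-Euclidean part of the argument.

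Two points deserve correction. First, you invoke ``the Lebesgue differentiation theorem on the doubling space $(X,\rho,\mu)$,'' but note that the hypothesis of the theorem is only that $(X,\rho)$ is \emph{geometrically} doubling and $\mu(B)<\infty$; the measure $\mu$ is \emph{not} assumed doubling, so the ball-based Lebesgue differentiation theorem is not available. What saves the argument is that $E_k f\to f$ a.e.\ is a purely martingale statement: the dyadic filtration has cubes of diameter $\lesssim\delta^k\to 0$, so it generates the Borel $\sigma$-algebra, and Doob's a.e.\ convergence theorem applies with no doubling of $\mu$. You should replace the Lebesgue-differentiation reference with the martingale convergence theorem.

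Second, your parenthetical ``a separate, straightforward treatment when $\mu(X)<\infty$'' hides a genuine issue rather than a routine one. The Haar functions $h^u_Q$ with $u\ge 1$ all have mean zero, so the series $\sum_Q\sum_{u=1}^{M_Q-1}\langle f,h^u_Q\rangle h^u_Q$ is orthogonal to constants; if $\mu(X)<\infty$ and $f$ does not have mean zero, this series cannot reproduce $f$ but only $f-\langle f\rangle_X$. In that regime the cancellative Haar system spans only the mean-zero subspace, and one must either adjoin the noncancellative function $h^0_{Q_0}$ on a top-level cube $Q_0=X$ or restrict the statement to $\mu(X)=\infty$. So the case $\mu(X)<\infty$ does not just require ``separate treatment''; it changes what the theorem can assert, and your telescoping proof only closes when $\lim_{k_0\to-\infty}E_{k_0}f=0$, which is precisely the condition $\mu(Q)\to\infty$ along ancestors. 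The rest of your outline is sound.
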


We now recall the decomposition of  a Calder\'on--Zygmund operator $T$ into dyadic Haar shifts, (see for example \cite{Hy,NRV,NV}).

\begin{thm}\label{thm repre}
Let $T$ be a Calder\'on--Zygmund operator associated with a kernel $K$. Then it has a decomposition:
for $f,g \in C_0^\eta(X)$,
\begin{align}
\langle g,Tf\rangle_{L^2(X)} = c(\|T\|_{2\to2} +|K|_{CZ}) E_w \sum_{m,\, n=0}^\infty \tau(m, n) \langle g,S^{m,\,n}_w f\rangle_{L^2(X)},
\end{align}
where $E_w$ is the expectation operator with respect to the random variable $w$, $\mathscr{D}_w$ is the random dyadic system, $S_w^{m,\,n}$ is a dyadic Haar shift with parameters $m,\, n$ on  $\mathscr{D}_w$ defined as follows
$$ S_w^{m,\,n}(f)(x) = \sum_{L\in \mathscr{D}_w}
\sum_{\substack{I\in \mathscr{D}_w,\, I\subset L\\ g(I)=g(L)+m }}\sum_{i=1}^{M_I-1}
\sum_{\substack{J\in \mathscr{D}_w,\, J\subset L\\ g(J)=g(L)+n }} \sum_{j=1}^{M_J-1}
a_{L,\,I,\,J} \langle  h_I^i, f \rangle_{L^2(X)} h_J^j(x) $$
with
$$ |a_{L,\,I,\,J}|\leq {\frac{\sqrt{\mu(I)}\sqrt{\mu(J)}}{\mu(L)}}\quad{\rm and}\quad \tau(m,n) \leq C\delta^{m+n},  $$
where $\delta$ is the small positive number satisfying $\delta\leq 10^{-3} A_0^{-10}$ with $A_0$ the constant in \eqref{quasimetric}.
\end{thm}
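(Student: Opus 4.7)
The plan is to adapt Hyt\"onen's dyadic representation theorem to the general space of homogeneous type $(X,d,\mu)$, following the strategy developed in \cite{Hy,NRV,NV}. The key ingredients are the random dyadic systems from the Hyt\"onen--Kairema construction, the orthonormal Haar basis $\{h_Q^u\}$ provided by Theorem \ref{thm:convergence}, and a good/bad decomposition of cubes that, in expectation, discards the pathological configurations so that only well-separated pairs contribute.

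First I would introduce a probability space whose points $w$ parameterize random dyadic systems $\mathscr{D}_w$, obtained by randomizing the choice of the reference points $\{x_\alpha^k\}$ at each scale. For $f,g\in C_0^\eta(X)$, Theorem \ref{thm:convergence} yields the expansion
\[
\langle g,Tf\rangle_{L^2(X)}=\sum_{I,J\in\mathscr{D}_w}\sum_{u,v}\langle f,h_I^u\rangle\,\langle g,h_J^v\rangle\,\langle Th_I^u,h_J^v\rangle.
\]
I would then call a cube $I$ of generation $k$ \emph{good} (with parameter $r$) if it lies well inside every ancestor cube of generation $k-r$; a probabilistic argument using the Hyt\"onen--Kairema machinery shows that $P(I\text{ is bad})$ becomes arbitrarily small as $r$ grows, so that after taking $E_w$ one may restrict the double sum to pairs in which the smaller cube is good inside the larger.

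For such a good pair $(I,J)$, let $L$ denote the minimal common dyadic ancestor and set $m:=g(I)-g(L)$, $n:=g(J)-g(L)$. The matrix coefficients $\langle Th_I^u,h_J^v\rangle$ are estimated in two regimes. In the \emph{separated} regime, $I$ and $J$ sit in different children of $L$, so their distance is comparable to $\delta^{g(L)}$; combining the cancellation of $h_I^u$ against the H\"older regularity (b) of the kernel, together with the size normalization $\|h_Q^u\|_\infty\lesssim\mu(Q)^{-1/2}$, produces a factor $\delta^{\varepsilon(m+n)}\sqrt{\mu(I)\mu(J)}/\mu(L)$. In the \emph{nested} regime, one inserts a paraproduct decomposition on the smaller cube and invokes the $L^2$-boundedness of $T$ to obtain a bound of the same form. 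Regrouping the terms by $(m,n)$ and by $L$ recognizes the inner sum as a dyadic Haar shift $S_w^{m,n}$ with coefficients satisfying $|a_{L,I,J}|\leq \sqrt{\mu(I)\mu(J)}/\mu(L)$, and the kernel estimates supply the overall decay $\tau(m,n)\leq C\delta^{m+n}$ with a constant proportional to $\|T\|_{L^2\to L^2}+|K|_{CZ}$.

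The main obstacle I expect is the bad-cube argument in the absence of translations or a group structure: badness must be formulated purely in terms of geometric proximity to ancestor boundaries, and the required small-probability estimate has to be extracted from the probabilistic Hyt\"onen--Kairema construction rather than from invariance under shifts. A second technical point is the justification of convergence and the interchange of the sums with the expectation $E_w$, which requires uniform control on the Haar tails together with the compact support and H\"older regularity of $f$ and $g$. Once these two issues are handled, the remaining work is essentially bookkeeping: organizing the nested and separated configurations, summing the geometric series in $m,n$, and verifying the stated normalizations of the shift coefficients.
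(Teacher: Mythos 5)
The paper does not prove Theorem~\ref{thm repre}; it is stated as a recalled result and cited to \cite{Hy,NRV,NV}. Your sketch reproduces the strategy of those references faithfully: random dyadic systems from the Hyt\"onen--Kairema construction, Haar expansion of both $f$ and $g$ via Theorem~\ref{thm:convergence}, a good/bad cube reduction, separated versus nested matrix estimates, and regrouping by relative generation $(m,n)$ to obtain shifts with geometric decay in $\delta^{m+n}$.

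Two points deserve tightening if you were to fill in the details. First, the reduction to good cubes does not follow merely from $P(I\ \text{bad})$ being small: one needs that goodness of a cube, once its scale and (random) position are fixed, depends only on the random choices at \emph{coarser} scales and is therefore independent of the Haar coefficients attached to $I$. It is this independence that lets one rewrite $\langle g, Tf\rangle = \pi_{\mathrm{good}}^{-1}\,E_w\sum_{I\ \mathrm{good}}\cdots$, or, in the absorption variant, bound the bad remainder by $\varepsilon\|T\|$ with $\varepsilon<1$; merely dropping the bad pairs after taking $E_w$ is not justified. Second, the nested configurations do not produce cancellative shifts of the form displayed in the statement: they generate noncancellative paraproducts with symbols $T1$ and $T^*1$, whose dyadic $\bmo$ norms (controlled by $\|T\|_{L^2\to L^2}$ via the $T1$ machinery) make the resulting $S^{0,0}_w$ bounded. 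These are precisely the ``noncancellative dyadic shift $S_\omega^{0,0}$ with symbol $a$'' that the paper's Theorem~\ref{one-para decomp}(ii) later treats separately; the displayed formula for $S^{m,n}_w$ (with $i,j\ge 1$ only) is implicitly supplemented by these paraproduct shifts. Your sketch gestures at the paraproduct decomposition but attributes the bound to $L^2$-boundedness directly rather than passing through $T1, T^*1\in\bmo$, which is where that boundedness actually enters.
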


With these tools at hand, we note that the idea and approach of the proof of Theorem \ref{UpperBdThm} is similar to the main result of \cite{DO}. For the sake of clarity, we provide an outline of the proof in the following two subsections.

\subsection{The one parameter case: $[b,T]$, $b\in {\rm BMO}(X)$}

To begin with, we derive a decomposition of the one-parameter commutator $[b,T]$ into basic paraproduct type operators.
\begin{thm}\label{one-para decomp}
Let $b\in\text{BMO}(X)$, $f\in C_0^\eta(X)$, and $T$ be a Calder\'on--Zygmund operator. Then,

{\bf (i)} for a cancellative dyadic shift $S_\omega^{m,\,n}$, $[b, S^{m,\,n}]$  can be represented as a finite linear combination of operators of the form
\begin{equation}\label{resp parapro-1}
S_\omega^{m,\,n}(B_k(b,f)),\, B_k(b, S_\omega^{m,\,n}f)
\end{equation}
where $k\in\mathbb{Z}, 0\le k\le \max(m, n)$ and the total number of terms
is bounded by $C(1+\max(m,n))$  for some universal constant $C$;

\medskip
{\bf (ii)} for a noncancellative dyadic shift $S_\omega^{0,\,0}$ with symbol $a$, $[b, S_\omega^{0,\,0}]f$ can be represented as a finite linear combination of operators of the form
\begin{equation}\label{resp parapro-2}
S_\omega^{0,\,0}(B_0(b, f)), B_0(b, S_\omega^{0,\,0}f), \tilde{B}_0(b, S_\omega^{0,\,0}f), P(b, a,f), P^\ast(b,a,f)
\end{equation}
and the total number of terms is bounded by a universal constant.
\end{thm}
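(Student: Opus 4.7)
The plan is to mimic the classical paraproduct decomposition strategy of Lacey--Petermichl--Pipher--Wick and Duong--Ou \cite{DO}, adapted to the multi-Haar system on $(X,d,\mu)$ from Theorem~\ref{thm:convergence}. The starting point is the pointwise product $b\cdot g$: expand both $b$ and $g$ in the Haar basis, and organise the resulting double sum over pairs of cubes $(Q,R)$ according to the three possibilities $Q\subsetneq R$, $R\subsetneq Q$, $Q=R$ (disjoint pairs contribute nothing). This yields the standard paraproduct decomposition
\[
b\cdot g = \Pi_b g + \Pi_g b + \Delta(b,g),
\]
where $\Pi_b g$ pairs the martingale average of $g$ with the Haar coefficients of $b$, $\Pi_g b$ is its symmetric counterpart, and $\Delta(b,g)$ is the ``same-cube'' diagonal contribution. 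These will correspond to the model paraproducts $B_0(b,\cdot)$ and $\widetilde{B}_0(b,\cdot)$ in the statement; the shifted versions $B_k(b,\cdot)$ appear when a Haar coefficient of $b$ on a cube $I$ must be paired with the average of the other function on a $k$-fold ancestor $I^{(k)}$.

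For part (i), I would start from
\[
[b,S_\omega^{m,n}]f \;=\; b\cdot S_\omega^{m,n}f \;-\; S_\omega^{m,n}(b\cdot f),
\]
substitute the paraproduct decomposition into both products, and expand $S_\omega^{m,n}f$ in the Haar basis provided by Theorem~\ref{thm repre}. Haar orthogonality forces cancellation between the two pieces of the commutator for every pair $(Q,I)$ or $(Q,J)$ in which the scale of $Q$ lies strictly above the shift's top cube $L$ or strictly below the finer of $I,J$. What survives splits into two families: terms in which $b$ is tested at a scale lying within $\max(m,n)$ above the Haar scale of $f$'s coefficient, producing operators of the form $S_\omega^{m,n}(B_k(b,f))$; and terms in which $b$ is tested at a scale within $\max(m,n)$ above the Haar scale of $S_\omega^{m,n}f$'s coefficient, producing $B_k(b,S_\omega^{m,n}f)$. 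The index $k\in\{0,1,\dots,\max(m,n)\}$ records the scale discrepancy, and since each $k$ contributes a bounded number of paraproduct types, the total count is $O(1+\max(m,n))$ as required.

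For part (ii), the noncancellative shift $S_\omega^{0,0}$ is itself essentially a paraproduct: its symbol $a=\{a_Q\}$ is uniformly bounded and $S_\omega^{0,0}f$ is (up to a Haar-type rewriting) an average of $a_Q\langle f\rangle_Q$ over cubes. Substituting the paraproduct decomposition into
\[
[b, S_\omega^{0,0}]f \;=\; b\cdot S_\omega^{0,0}f \;-\; S_\omega^{0,0}(b\cdot f)
\]
and collecting terms, the loss of Haar cancellation at the shift scale produces residual contributions that cannot be absorbed into compositions with $S_\omega^{0,0}$; regrouping these identifies them as the mixed paraproducts $P(b,a,f)$ and $P^\ast(b,a,f)$ appearing in the statement, which simultaneously couple $b$, $a$ and $f$. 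The remaining, cancellative-type, pieces combine into $S_\omega^{0,0}(B_0(b,f))$, $B_0(b,S_\omega^{0,0}f)$, and $\widetilde{B}_0(b,S_\omega^{0,0}f)$; since $m=n=0$ there is no scale discrepancy to track, so the total number of summands is a universal constant.

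The main technical obstacle will be the scale-and-generator bookkeeping in the space-of-homogeneous-type setting: each $Q\in\mathscr{D}_\omega$ carries $M_Q-1$ distinct Haar generators $h_Q^u$ rather than the single one available on $\R$, so every paraproduct is itself a finite sum over $u$, and one must check that the orthogonality-based cancellations go through at each fixed pair of Haar indices. A secondary nuisance is that $b\in\bmo(X)$ is only locally integrable and its Haar expansion need not converge pointwise; this is handled by testing $[b,S_\omega^{m,n}]f$ against compactly supported $g\in C_0^\eta(X)$, which reduces the entire argument to finite sums in the martingale filtration and legitimises all reorderings.
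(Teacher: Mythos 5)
Your proposal is correct and follows essentially the same strategy that the paper itself adopts: the paper's proof consists of citing \cite[Theorem~3.2]{DO} and noting that the only structural input needed is the martingale identity $\sum_{J:I\subsetneq J}\sum_j\langle f,h_J^j\rangle h_J^j h_I^i=\langle f,h_I^0\rangle h_I^0 h_I^i$ together with proper nesting of the dyadic cubes, which is exactly the ``one must check that the orthogonality-based cancellations go through at each fixed pair of Haar indices'' point you raise. One small inaccuracy worth flagging: in part (ii) the symbol $a$ of the noncancellative shift should be regarded as a BMO function (with Haar coefficients $\langle a,h_J^j\rangle$ entering the definition of $P$), not a uniformly bounded sequence $\{a_Q\}$; this is how Lemma~\ref{lem Bk P} controls $P(b,a,f)$, but it does not affect the soundness of your argument.
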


The paraproduct like operators in the above theorem are defined as the following. The generalized dyadic paraproduct
\begin{equation}\label{B0}
B_k(b, f):=\sum_{I}\sum_{i'=1}^{M_{I^{(k)}}-1}\sum_{i=1}^{M_{I}-1}\langle b, h^{i'}_{I^{(k)}}\rangle_{L^2(X)}\langle f, h_I^i\rangle_{L^2(X)} h_I^i \, h^{i'}_{I^{(k)}},
\end{equation}
where $I^{(k)}$ denotes the $k$-th dyadic ancestor of $I$. Observe that when $k=0$, this is the classical paraproduct
\begin{equation}\label{B0t}
\tilde{B}_0(b, f):=\sum_{I}\sum_{i=1}^{M_{I}-1}\langle b, h^{i}_{I}\rangle_{L^2(X)}\langle f, h_I^0\rangle_{L^2(X)} h_I^0 \, h^{i}_{I}.
\end{equation}
And the trilinear operator
\begin{equation}\label{P}
P(b, a,f):=\sum_{I}\sum_{i=1}^{M_I-1}\langle b, h_I^i\rangle_{L^2(X)}\langle f, h_I^i\rangle_{L^2(X)} h^i_I h^{i}_I \sum_{J,\,J\subsetneq I}\sum_{j=1}^{M_J-1} \langle a, h_J^j\rangle_{L^2(X)} h_J^j,
\end{equation}
with $P^\ast$ being understood as the adjoint of $P$ with $b$ and $a$ fixed. The important property of the above operators is that they are uniformly bounded on $L^2$ with BMO symbols.
\begin{lem}\label{lem Bk P}
Given $a, b\in {\rm BMO}(X)$ and $k\geq0$, we have
$$ \|B_k(b, f)  \|_{L^2(X)} \lesssim \|b\|_{{\rm BMO}(X)} \|f\|_{L^2(X)}, $$
$$ \|\tilde{B}_0(b, f)  \|_{L^2(X)} \lesssim \|b\|_{{\rm BMO}(X)} \|f\|_{L^2(X)}, $$
and
$$ \|P(a, b, f)  \|_{L^2(X)} \lesssim \|a\|_{{\rm BMO}(X)}\|b\|_{{\rm BMO}(X)} \|f\|_{L^2(X)}.$$
\end{lem}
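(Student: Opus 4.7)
The plan is to bound each of the three operators on $L^2(X)$ by reducing to dyadic Carleson-type estimates. The unifying ingredient is the Haar characterization of $\bmo(X)$ on spaces of homogeneous type (see \cite{KLPW,HLW}): for $b\in\bmo(X)$, the sequence $\{\|D_Ib\|_{L^2(X)}^2\}_{I\in\mathscr{D}}$, with $D_Ib:=\sum_i\langle b,h_I^i\rangle_{L^2(X)}\,h_I^i$ the $I$-martingale difference, is a Carleson sequence with constant comparable to $\|b\|_{\bmo(X)}^2$, i.e.\ $\sum_{I\subseteq J_0}\|D_Ib\|_{L^2(X)}^2\lesssim\|b\|_{\bmo(X)}^2\mu(J_0)$ for every $J_0\in\mathscr{D}$. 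Throughout I will combine this with orthogonality of distinct martingale differences and either the Carleson embedding theorem or a pointwise square-function estimate.

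For $\tilde B_0(b,f)$, the father Haar function $h_I^0$ is a scalar multiple of $\mu(I)^{-1/2}\mathbf 1_I$, so $\langle f,h_I^0\rangle h_I^0$ equals a bounded constant times $E_If\cdot\mathbf 1_I$ and each product $h_I^0h_I^i$ is a scalar multiple of $h_I^i$. Hence, up to bounded constants, $\tilde B_0(b,f)=\sum_I(E_If)\,D_Ib$, and orthogonality yields $\|\tilde B_0(b,f)\|_{L^2(X)}^2\approx\sum_I|E_If|^2\|D_Ib\|_{L^2(X)}^2$, controlled by $\|b\|_{\bmo(X)}^2\|f\|_{L^2(X)}^2$ via Carleson embedding applied to $\{\|D_Ib\|_{L^2(X)}^2\}$ and the maximal-function bound $\sup_I|E_If\mathbf 1_I|\leq Mf$.

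For $B_k(b,f)$ with $k\geq 1$, the key observation is that $h^{i'}_{I^{(k)}}$ is constant on each child of $I^{(k)}$ and hence on $I\subseteq I^{(k-1)}$, so $h_I^i\,h^{i'}_{I^{(k)}}$ is a scalar multiple of $h_I^i$. This lets me write
\[
B_k(b,f)=\sum_I\gamma_I\,D_If,\qquad \gamma_I:=(D_{I^{(k)}}b)\big|_I\in\mathbb R,
\]
and by orthogonality of $\{D_If\}_I$,
\[
\|B_k(b,f)\|_{L^2(X)}^2=\sum_I|\gamma_I|^2\|D_If\|_{L^2(X)}^2=\int_X\sum_{I\ni x}|D_{I^{(k)}}b(x)|^2\,|D_If(x)|^2\,d\mu(x).
\]
The pointwise estimate
\[
|D_Jb(x)|=|E_{\mathrm{child}(J,x)}b-E_Jb|\leq\frac{\mu(J)}{\mu(\mathrm{child}(J,x))}\,\|b\|_{\bmo(X)}\lesssim\|b\|_{\bmo(X)},
\]
uniform in $J$ and $x$ thanks to the measure comparability of parent-child pairs in the Hyt\"onen--Kairema cubes, then reduces the integrand to $\|b\|_{\bmo(X)}^2\sum_{I\ni x}|D_If(x)|^2=\|b\|_{\bmo(X)}^2(Sf)^2(x)$, and integrating gives $\|b\|_{\bmo(X)}^2\|f\|_{L^2(X)}^2$ uniformly in $k$.

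For $P(b,a,f)$, the inner sum $A_I:=\sum_{J\subsetneq I}D_Ja$ equals $a-E_Ia-D_Ia$ on $I$, and John--Nirenberg yields $\|A_I\|_{L^2(I,d\mu)}\lesssim\|a\|_{\bmo(X)}\mu(I)^{1/2}$. Using $\|h_I^i\|_\infty^2\lesssim\mu(I)^{-1}$ and testing against $g\in L^2(X)$ I estimate $|\langle(h_I^i)^2A_I,g\rangle|\lesssim\|a\|_{\bmo(X)}\mu(I)^{-1/2}\|g\|_{L^2(I)}$; Cauchy--Schwarz in $i$ produces $\|D_Ib\|_2\|D_If\|_2$, and a further Cauchy--Schwarz in $I$ gives
\[
|\langle P(b,a,f),g\rangle|\lesssim\|a\|_{\bmo(X)}\|f\|_{L^2(X)}\Bigl(\sum_I\|D_Ib\|_{L^2(X)}^2\,E_I(|g|^2)\Bigr)^{1/2},
\]
and the $L^1$-form of Carleson embedding applied to $\{\|D_Ib\|_{L^2(X)}^2\}$ bounds the last sum by $\|b\|_{\bmo(X)}^2\|g\|_{L^2(X)}^2$, closing the estimate by duality. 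The main obstacle is the uniform-in-$k$ control of $B_k$, which hinges on the pointwise bound $\sup_{J\ni x}|D_Jb(x)|\lesssim\|b\|_{\bmo(X)}$; this depends on a uniform upper bound for $\mu(J)/\mu(J')$ over parent-child pairs $(J,J')$, a property guaranteed by the Hyt\"onen--Kairema construction on geometrically doubling spaces with a doubling measure.
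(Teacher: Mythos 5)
Your proof is correct, and it fills in exactly the details that the paper delegates to \cite[Lemma~3.6, 3.7]{DO}. Your treatment of $\tilde B_0$ and $P$ follows the standard route the paper indicates (orthogonality of the Haar system, Carleson embedding/$H^1$--$\bmo$ duality, and John--Nirenberg to control $\sum_{J\subsetneq I}D_Ja=a-E_Ia-D_Ia$ on $I$). For $B_k$, your observation that $h^{i'}_{I^{(k)}}$ is constant on $I\subseteq I^{(k-1)}$, so that $B_k(b,\cdot)$ is a martingale transform with scalar multiplier $\gamma_I=(D_{I^{(k)}}b)|_I$, together with the pointwise bound $|D_Jb|\lesssim\|b\|_{\bmo(X)}$ (which relies on the parent--child measure comparability guaranteed by the Hyt\"onen--Kairema construction and doubling), gives a clean, $k$-uniform bound by pure orthogonality, slightly more elementary than the $H^1$--$\bmo$ duality pairing one might read into the paper's list of ingredients. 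Both routes deliver the same estimates; yours makes the uniformity in $k$ immediate, which is what the representation theorem requires.

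One small presentational note: in the display for $\|B_k(b,f)\|_{L^2(X)}^2$, the middle expression $\sum_I|\gamma_I|^2\|D_If\|_{L^2(X)}^2$ already uses that $D_If$ is supported in $I$ when you rewrite it as $\int_X\sum_{I\ni x}|D_{I^{(k)}}b(x)|^2|D_If(x)|^2\,d\mu(x)$; this is correct, but worth flagging since $\gamma_I$ was defined as a scalar and the rewriting implicitly replaces $\gamma_I$ by the function $D_{I^{(k)}}b$ on the support of $D_If$. Nothing to change, just a clarity point.
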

The lemma is well-known for $\tilde{B}_0(b,f)$, which is the classical paraproduct. For $B_k(b,f)$, $k\geq1$
and for $P(b, a,f)$, the boundedness follows from adaptations and modifications of \cite[Lemma 3.6 and 3.7]{DO} to the spaces of homogeneous type. The relevant properties of spaces of homogeneous type here are the orthogonality of the Haar bases $\{h_J^j\}_{J,j}$, $H^1$-BMO duality, dyadic square function characterization of dyadic $H^1$ and the John-Nirenberg inequality.

The proof of Theorem \ref{one-para decomp} follows essentially the same strategy of \cite[Theorem 3.2]{DO}. Unlike the Euclidean setting, where associated with any $Q\in\mathcal{D}$ are a fixed number of Haar functions that are constant on each child (of the same measure) of $Q$, in spaces of homogeneous type, there are $M_Q$ Haar functions $h_Q^u$ for any $Q\in\mathcal{D}$ and the measure of each child of $Q$ can be different. Fortunately, by closely examining the argument in \cite{DO}, one observes without much difficulty that the only properties of the Haar systems it relies on are the martingale structure: 
\begin{align}
\sum_{J:\ I\subsetneq J} \sum_{j=1}^{M_J-1} \langle f, h_J^j\rangle_{L^2(X,\mu)} h_J^j h_I^i
= \langle f,h_I^0\rangle_{L^2(X,\mu)} h_I^0h_I^i.
\end{align}
 and the fact that the dyadic cubes in $\mathcal{D}$ are properly nested. We omit the details of the proof.

In particular, Theorem \ref{one-para decomp}, together with Lemma \ref{lem Bk P} and the representation of Calder\'on-Zygmund operators by Haar shifts (Theorem \ref{thm repre}), implies almost immediately the upper bound of the commutator $[b,T]$ in spaces of homogeneous type:
\begin{align}\label{upper bound one-para}
\| [b, T] \|_{L^2(X)\to L^2(X)} \leq C \|b\|_{BMO(X)},
\end{align}
which recovers the upper bound result of \cite{crw,kl,bc}. More importantly, Theorem \ref{UpperBdThm} follows from iterating Theorem \ref{one-para decomp} and BMO estimates of certain bi-parameter paraproduct like operators, which we explain in the next subsection.

\subsection{The iterated case: $[[b,T_1],T_2]$}
Applying the representation theorem (Theorem \ref{thm repre}) in both variables, one could obtain Theorem \ref{UpperBdThm} by proving for any $f\in C^\eta_0(X_1\times X_2)$ that
\begin{align}\label{upp bdd commu bi}
&\bigg\|\sum_{m_1,\,m_2,\,n_1,\,n_2=0}^\infty \tau(m_1,n_1)\tau(m_2,n_2) [[b, S_1^{m_1,n_1}], S_2^{m_2,n_2}]f\bigg\|_{L^2(X_1\times X_2)}\\
&\quad\ls \|b\|_{BMO(X_1\times X_2)}\|f\|_{L^2(X_1\times X_2)}.\nonumber
\end{align}
By an iteration of Theorem \ref{one-para decomp}, one can represent $[[b, S_1^{m_1,\,n_1}], S_2^{m_2,\,n_2}]$ as a finite linear combination of basic operators which are essentially tensor products of the operators $B_k$, $\tilde{B}_0$ and $P$ in the one-parameter setting as in \eqref{B0}, \eqref{B0t}  and \eqref{P}, and the total number of terms is no greater than $C(1+\max (m_1,n_1))(1+\max (m_2,n_2))$. Estimate (\ref{upp bdd commu bi}) then follows from the uniform boundedness of these operators which we conclude in Lemma \ref{lem bipara} below. The proof of Theorem \ref{UpperBdThm} is thus complete.

More precisely, we need to consider the following paraproduct like operators in the bi-parameter setting (to condense notation that we omit the subscript $L^2(X_1\times X_2)$ on the inner products). To begin with, we let $a, b\in {\rm BMO}(X_1\times X_2)$, $a^1\in {\rm BMO}(X_1)$ and $a^2\in {\rm BMO}(X_2)$. The generalized bi-parameter dyadic paraproduct
\begin{equation*}
B_{k,l}(b, f):=
\sum_{I}\sum_{i'=1}^{M_{I^{(k)}}-1}\sum_{i=1}^{M_{I}-1}\sum_{J}\sum_{j'=1}^{M_{J^{(l)}}-1}\sum_{j=1}^{M_{J}-1}\langle b, h^{i'}_{I^{(k)}}\otimes h^{j'}_{J^{(l)}}\rangle\langle f, h_I^i\otimes h_J^j\rangle h_I^i \, h^{i'}_{I^{(k)}}\otimes h_J^j\, h^{j'}_{J^{(l)}}.
\end{equation*}
Parallel to \eqref{B0t}, we also have
\begin{align*}
\tilde{B}_{k,l}^{(1)}(b, f)&:=
\sum_{I}\sum_{i'=1}^{M_{I^{(k)}}-1}\sum_{J}\sum_{j'=1}^{M_{J^{(l)}}-1}\sum_{j=1}^{M_{J}-1}\langle b, h^{i'}_{I^{(k)}}\otimes h^{j'}_{J^{(l)}}\rangle\langle f, h_I^0\otimes h_J^j\rangle h_I^0 \, h^{i'}_{I^{(k)}}\otimes h_J^j\, h^{j'}_{J^{(l)}},\\
\tilde{B}_{k,l}^{(2)}(b, f)&:=
\sum_{I}\sum_{i'=1}^{M_{I^{(k)}}-1}\sum_{i=1}^{M_{I}-1}\sum_{J}\sum_{j'=1}^{M_{J^{(l)}}-1}\langle b, h^{i'}_{I^{(k)}}\otimes h^{j'}_{J^{(l)}}\rangle\langle f, h_I^i\otimes h_J^0\rangle h_I^i \, h^{i'}_{I^{(k)}}\otimes h_J^0\, h^{j'}_{J^{(l)}},\\
\tilde{B}_{k,l}^{(3)}(b, f)&:=
\sum_{I}\sum_{i'=1}^{M_{I^{(k)}}-1}\sum_{J}\sum_{j'=1}^{M_{J^{(l)}}-1}\langle b, h^{i'}_{I^{(k)}}\otimes h^{j'}_{J^{(l)}}\rangle\langle f, h_I^0\otimes h_J^0\rangle h_I^0 \, h^{i'}_{I^{(k)}}\otimes h_J^0\, h^{j'}_{J^{(l)}}.
\end{align*}
The trilinear operator
\begin{equation*}
\begin{split}
PP(b, a,f):=&\sum_{I}\sum_{i=1}^{M_I-1}\sum_{J}\sum_{j=1}^{M_J-1}\langle b, h_I^i\otimes h_J^j\rangle\langle f, h_I^i\otimes h_{J}^j\rangle h^i_I h^{i}_I\otimes h^j_J h^j_J \cdot\\
&\sum_{I_1:\,I_1\subsetneq I}\sum_{i'=1}^{M_{I_1}-1}\sum_{J_1:\,J_1\subsetneq J}\sum_{j'=1}^{M_{J_1}-1}\langle a, h_{I_1}^{i'}\otimes h_{J_1}^{j'}\rangle h_{I_1}^{i'}\otimes h_{J_1}^{j'},
\end{split}
\end{equation*}
where all the Haar functions are cancellative. And the new mixed type trilinear operators
\begin{align*}
&\begin{split}
BP_k(b,a^2,f):=&\sum_{I}\sum_{i'=1}^{M_{I^{(k)}}-1}\sum_{i=1}^{M_{I}-1}\sum_{J}\sum_{j=1}^{M_J-1}\langle b, h^{i'}_{I^{(k)}}\otimes h_J^j\rangle \langle f, h_I^i\otimes h_J^j\rangle h_I^i h^{i'}_{I^{(k)}}\otimes h^j_J h^j_J \cdot\\
&\sum_{J_1:\,J_1\subsetneq J}\sum_{j'=1}^{M_{J_1}-1}\langle a^2, h_{J_1}^{j'}\rangle_2 h_{J_1}^{j'},
\end{split}\\
&\begin{split}
\tilde{B}P_k(b,a^2,f):=&\sum_{I}\sum_{i'=1}^{M_{I^{(k)}}-1}\sum_{J}\sum_{j=1}^{M_J-1}\langle b, h^{i'}_{I^{(k)}}\otimes h_J^j\rangle \langle f, h_I^0\otimes h_J^j\rangle h_I^0 h^{i'}_{I^{(k)}}\otimes h^j_J h^j_J \cdot\\
&\sum_{J_1:\,J_1\subsetneq J}\sum_{j'=1}^{M_{J_1}-1}\langle a^2, h_{J_1}^{j'}\rangle_2 h_{J_1}^{j'},
\end{split}\\
&\begin{split}
PB_l(b,a^1,f):=&\sum_{I}\sum_{i=1}^{M_I-1}\sum_{J}\sum_{j'=1}^{M_{J^{(l)}}-1}\sum_{j=1}^{M_{J}-1}\langle b, h_I^i\otimes h^{j'}_{J^{(l)}}\rangle\langle f, h_{I}^i\otimes h_J^j\rangle h_I^i h_I^i\otimes h_J^j\, h^{j'}_{J^{(l)}} \cdot\\
&\sum_{I_1:\,I_1\subsetneq I}\sum_{i'=1}^{M_{I_1}-1} \langle a^1, h_{I_1}^{i'}\rangle_1 h_{I_1}^{i'},
\end{split}\\
&\begin{split}
P\tilde{B}_l(b,a^1,f):=&\sum_{I}\sum_{i=1}^{M_I-1}\sum_{J}\sum_{j'=1}^{M_{J^{(l)}}-1}\langle b, h_I^i\otimes h^{j'}_{J^{(l)}}\rangle\langle f, h_{I}^i\otimes h_J^0\rangle h_I^i h_I^i\otimes h_J^0\, h^{j'}_{J^{(l)}} \cdot\\
&\sum_{I_1:\,I_1\subsetneq I}\sum_{i'=1}^{M_{I_1}-1} \langle a^1, h_{I_1}^{i'}\rangle_1 h_{I_1}^{i'}.
\end{split}
\end{align*}
\begin{lem}\label{lem bipara}
Given $a, b\in {\rm BMO}(X_1\times X_2)$, $a^1\in {\rm BMO}(X_1)$ and $a^2\in {\rm BMO}(X_2)$, we have
\[
\|PP(b,a,f) \|_{L^2(X_1\times X_2)} \lesssim \|b\|_{{\rm BMO}(X_1\times X_2)} \|a\|_{{\rm BMO}(X_1\times X_2)}\|f\|_{L^2(X_1\times X_2)}
\]
and the same for $PP_1(b,a,f)$, which denotes the partial adjoint of $PP$ in the first variable with respect to the third input function;
moreover, for $k, l\geq0$, we have
\[
\|B_{k,l}(b, f)  \|_{L^2(X_1\times X_2)} \lesssim \|b\|_{{\rm BMO}(X_1\times X_2)} \|f\|_{L^2(X_1\times X_2)}
\]
and the same for $\tilde{B}^{(1)}_{k,l}(b, f)$, $\tilde{B}^{(2)}_{k,l}(b, f)$ and $\tilde{B}^{(3)}_{k,l}(b, f)$;
\[
 \|BP_k(b,a^2,f) \|_{L^2(X_1\times X_2)} \lesssim \|b\|_{{\rm BMO}(X_1\times X_2)} \|a^2\|_{{\rm BMO}(X_2)}\|f\|_{L^2(X_1\times X_2)}
\]
and the same for $\tilde{B}P_k(b,a^2,f)$;
\[
\|PB_l(b,a^1,f) \|_{L^2(X_1\times X_2)} \lesssim \|b\|_{{\rm BMO}(X_1\times X_2)} \|a^1\|_{{\rm BMO}(X_1)}\|f\|_{L^2(X_1\times X_2)}
\]
and the same for $P\tilde{B}_l(b,a^1,f)$.
\end{lem}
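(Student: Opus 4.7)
The plan is to prove each of the bi-parameter bounds by dualizing against a test function $g\in L^2(X_1\times X_2)$ of unit norm and reducing to a trilinear (or quadrilinear, in the $PP$ case) sum over dyadic rectangles, which is then controlled via the product $H^1$--$\bmo$ duality together with the Carleson characterization of $\bmo(X_1\times X_2)$ already encoded in \eqref{Carleson norm}. The key inputs are: (a) the orthogonality of the Auscher--Hyt\"onen Haar system on each factor and the resulting orthogonality of the tensor product basis on $X_1\times X_2$; (b) the bi-parameter square-function characterization of dyadic product $H^1$ (which in turn is dual to dyadic product $\bmo$); (c) the one-parameter estimates already collected in Lemma \ref{lem Bk P}; and (d) Journ\'e's covering lemma in the space-of-homogeneous-type form needed to pass from sums over rectangles inside arbitrary open sets to the rectangular Carleson condition defining $\bmo(X_1\times X_2)$.

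For the pure generalized paraproducts $B_{k,l}(b,f)$ and $\tilde B^{(i)}_{k,l}(b,f)$ I would iterate the one-parameter proofs of Lemma \ref{lem Bk P}. After dualizing, the sum factorizes along each parameter after Cauchy--Schwarz in the BMO indices $(i',j')$, isolating the factor
\[
\Bigl(\sum_{I,J}\sum_{i',j'}|\langle b,h_{I^{(k)}}^{i'}\otimes h_{J^{(l)}}^{j'}\rangle|^2\,\mathbf 1_{I\times J}/\mu(I\times J)\Bigr)^{1/2},
\]
whose $L^2$-norm restricted to any open set $\Omega$ is controlled by $\mu(\Omega)^{1/2}\|b\|_{\bmo(X_1\times X_2)}$ via \eqref{Carleson norm}, while the remaining factors (involving $f$ and $g$) collapse to bi-parameter dyadic square functions of $f$ and $g$ by the martingale identity displayed in the previous subsection. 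A standard stopping-time / John--Nirenberg argument over super-level sets of the maximal square function of $g$, combined with Journ\'e's lemma to control the enlarged Carleson region, yields the bound $\|b\|_{\bmo(X_1\times X_2)}\|f\|_{L^2}$ uniformly in $k,l\geq 0$.

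The trilinear $PP(b,a,f)$ is handled by a double application of the one-parameter argument for $P(b,a,f)$ in Lemma \ref{lem Bk P}. After dualizing, the inner double sum indexed by $I_1\subsetneq I$, $J_1\subsetneq J$ is absorbed into an $\bmo(X_1\times X_2)$-Carleson estimate for $a$, leaving a quantity of exactly the same shape as the pairing treated for $B_{k,l}$, with $b$ playing its previous role; the product $H^1$--$\bmo$ duality then closes the argument, with the same strategy working for the partial adjoint $PP_1$. The mixed trilinear operators $BP_k, \tilde BP_k, PB_l, P\tilde B_l$ are treated analogously, except that only one of the variables carries the ``inner'' iterated-paraproduct structure: the one-parameter Carleson condition for $a^1\in\bmo(X_1)$ or $a^2\in\bmo(X_2)$ replaces the bi-parameter Carleson condition for $a$ in that coordinate, and the other coordinate is handled exactly as in $B_{k,l}$.

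The main obstacle I anticipate is the last step in each case: one cannot apply a naive Cauchy--Schwarz over all dyadic rectangles because the Carleson norm in \eqref{Carleson norm} is only a \emph{rectangular} BMO condition, not a true sum over rectangles. This is precisely the classical product-BMO difficulty that forces the use of Journ\'e's covering lemma, and establishing a version of Journ\'e's lemma suitable for the Auscher--Hyt\"onen dyadic system on spaces of homogeneous type (together with the accompanying control of the shadow of an open set) is the nontrivial technical point. Once that is in place, the structural argument from \cite{DO} — which uses only Haar orthogonality, the martingale identity, $H^1$--BMO duality, and the John--Nirenberg inequality — goes through essentially verbatim, with the variable number $M_Q$ of children at each cube absorbed harmlessly into the Cauchy--Schwarz steps since all estimates are performed after squaring.
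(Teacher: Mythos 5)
Your overall strategy --- dualizing against $g\in L^2$, applying Cauchy--Schwarz in the BMO indices to isolate a product-BMO square function of $b$ together with square/maximal functions of $f$ and $g$, and then closing via the Carleson condition --- matches the shape of the argument in \cite{DO} that the paper invokes. However, you have misidentified the decisive technical ingredient. The paper's proof (and the three lemmas of \cite{DO} it points to) does \emph{not} pass through Journ\'e's covering lemma; the tool it singles out as essential is the \emph{multi-parameter John--Nirenberg inequality} in the homogeneous setting, which is exactly Lemma~\ref{lem bipara-2}. That inequality is what bumps the $L^2$-level Carleson estimate encoded in \eqref{Carleson norm} up to $L^p$ for $p>2$, which is precisely what you need after Cauchy--Schwarz in order to pair the square function of $b$ against an $L^{p'}$-bound on the strong maximal or square function of $f$ and $g$.

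You also assert that \eqref{Carleson norm} is ``only a rectangular BMO condition,'' forcing you to invoke Journ\'e's lemma. This is not the situation here: the supremum in Definition~\ref{def-BMO} runs over arbitrary open sets $\Omega$, so it is the full Chang--Fefferman product-BMO norm, not the strictly weaker rectangular-BMO norm. Journ\'e's covering lemma is the tool one reaches for when \emph{starting} from rectangular BMO (or from one-parameter rectangle atoms) and trying to upgrade to genuine product BMO or product $H^1$; since $b$ here is hypothesized to lie in $\bmo(X_1\times X_2)$ from the outset, no such upgrade is required. Tellingly, your own concluding sentence lists the ingredients of \cite{DO} as Haar orthogonality, the martingale identity, $H^1$--BMO duality, and the John--Nirenberg inequality --- and Journ\'e's lemma is correctly absent from that list, which makes your earlier identification of it as ``the nontrivial technical point'' internally inconsistent. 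A Journ\'e-based route may well be possible, but it would be a genuinely different and, in this generality of homogeneous-type spaces, substantially heavier argument than the one the paper actually invokes; what you should emphasize instead is the multi-parameter John--Nirenberg inequality of Lemma~\ref{lem bipara-2}, whose proof in this setting (following the Chang--Fefferman argument with the Auscher--Hyt\"onen Haar system) is the only genuinely new ingredient beyond \cite{DO}.
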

The above result can be derived similarly as in \cite[Lemmas 4.1, 4.2, and 4.5]{DO},
therefore we omit most of the details. We point out that a key fact that is crucial is
the following multi-parameter John-Nirenberg inequality in the homogeneous setting. The multiparameter John-Nirenberg inequality
was first introduced in \cite[Section III]{CF} for the product BMO space defined via the wavelet basis (see also \cite[Proposition 4.1]{Tao} for dyadic product BMO on $\R\times \R$ defined via Haar basis). We note that
this John-Nirenberg inequality also holds with the Haar system in the setting of space of homogeneous type. For the details, we refer to
\cite[pp.199--200]{CF} and omit it here.
\begin{lem}\label{lem bipara-2}
Given $b\in {\rm BMO}(X_1\times X_2)$ and $p\in(1,\infty)$, there holds
\begin{align}
\bigg\| \bigg( \sum_{R=I\times J\subset\Omega} \sum_{i=1}^{M_I-1}\sum_{j=1}^{M_j-1}\big| \langle b, h_I^ih_J^j \rangle\big|^2 {\frac{\chi_R}{\mu(R)} } \bigg)^{1/2}\bigg\|_{L^p(X_1\times X_2)} \leq C\|b\|_{{\rm BMO}(X_1\times X_2)} \mu(\Omega)^{1/p}.
\end{align}
\end{lem}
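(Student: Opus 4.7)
The plan is to adapt the Chang--Fefferman proof of the multi-parameter John--Nirenberg inequality to the present setting of spaces of homogeneous type. Let $F$ denote the square function appearing on the left-hand side of the claimed estimate. I would first dispose of the case $p=2$, which is essentially immediate: by Fubini and orthonormality of the Haar functions,
\[
\|F\|_{L^2(X_1\times X_2)}^2 = \sum_{R=I\times J\subset\Omega} \sum_{i=1}^{M_I-1}\sum_{j=1}^{M_J-1}|\langle b, h_I^i h_J^j\rangle|^2 \leq C\|b\|_{\bmo(X_1\times X_2)}^2\,\mu(\Omega),
\]
where the last inequality is exactly the Carleson-type condition defining $\bmo(X_1\times X_2)$, once one knows that the norm in Definition~\ref{def-BMO} given via the wavelet system $\{\psi_\alpha^k\}$ is equivalent to the analogous norm defined via the Haar system $\{h_Q^u\}$ on the same dyadic grid (both being orthonormal bases compatible with the same dyadic structure).

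For $p\in(2,\infty)$, I would argue by duality. Testing $F^2$ against a nonnegative $g\in L^{(p/2)'}(X_1\times X_2)$ of unit norm reduces the problem to bounding
\[
\int F^2\, g\, d\mu = \sum_{R=I\times J\subset\Omega}\sum_{i,j}|\langle b, h_I^i h_J^j\rangle|^2\,\langle g\rangle_R
\]
by $C\|b\|_{\bmo}^{2}\,\mu(\Omega)^{2/p}$, where $\langle g\rangle_R := \mu(R)^{-1}\int_R g\, d\mu$. The main point is that $\langle g\rangle_R \leq \inf_{x\in R} M_s g(x)$, with $M_s$ denoting the strong (product) maximal operator. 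Following Chang--Fefferman, one then applies Journ\'e's covering lemma to reorganise the sum over $R\subset\Omega$ into a sum localised inside a controlled enlargement $\widetilde{\Omega}$ of $\Omega$ whose measure depends on the eccentricities of the rectangles involved; combined with the $L^{(p/2)'}$-boundedness of $M_s$ and with the BMO Carleson control already used at $p=2$, this yields the desired estimate. The range $p\in(1,2)$ can then be recovered either by interpolation between $p=2$ and some $p>2$, or by an entirely analogous direct duality argument.

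The main obstacle, and the only place where genuine input beyond the classical Euclidean proof is needed, is Journ\'e's covering lemma for product spaces of homogeneous type. This has been established in the doubling setting in the literature; once it is in hand, the entire proof becomes a direct translation of pp.~199--200 of \cite{CF}, with Euclidean dyadic rectangles replaced by products $I\times J$ of the Hyt\"onen--Kairema dyadic cubes in $X_1$ and $X_2$, and with the Euclidean strong maximal function replaced by its analogue in the present homogeneous-type setting. Apart from this geometric input, the only analytic ingredients used are the product-BMO Carleson condition and the $L^q$-boundedness of $M_s$ for $q>1$.
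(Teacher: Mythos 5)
The paper gives essentially no proof here: Lemma~\ref{lem bipara-2} is stated and the reader is referred to \cite[pp.~199--200]{CF} (and \cite[Proposition~4.1]{Tao}) for the details, with only the remark that the argument adapts to the Haar system on spaces of homogeneous type. Your attempt to reconstruct the argument is therefore doing more than the paper itself, and the core ingredients you identify---the Carleson condition, the strong maximal operator, duality for large $p$, and the need to reconcile the Haar basis with the Auscher--Hyt\"onen wavelets of Definition~\ref{def-BMO}---are the right ones. Two points need correction, however.

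First, Journ\'e's covering lemma plays no role here. It does not appear in Chang--Fefferman (their 1980 paper predates Journ\'e's 1986 covering lemma), and the duality argument for $p>2$ closes without it: fix $g\geq 0$ with $\|g\|_{L^{(p/2)'}}=1$, let $U_k:=\{\mathcal{M}_s g > 2^k\}$, and stratify the rectangles $R\subset\Omega$ according to the largest $k$ with $R\subset U_k$. For such $R$ one has $\langle g\rangle_R \leq 2^{k+1}$, and $R\subset\Omega\cap U_k$, which is open; the Carleson condition therefore bounds the corresponding piece of $\sum_R\sum_{i,j}|\langle b,h_I^ih_J^j\rangle|^2\langle g\rangle_R$ by $2^{k+1}\|b\|_{\bmo}^2\min\{\mu(\Omega),\mu(U_k)\}$, and $\mu(U_k)\lesssim 2^{-k(p/2)'}$ by the $L^{(p/2)'}$-bound on $\mathcal{M}_s$. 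Summing the geometric series in $k$ gives exactly $\mu(\Omega)^{2/p}$. Journ\'e's lemma enters product theory elsewhere (atomic decomposition of $H^1$, Carleson's counterexample, rectangle versus product BMO), not in the John--Nirenberg inequality.

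Second, the range $p\in(1,2)$ cannot be obtained by interpolating between $p=2$ and some $p>2$: interpolation only produces exponents lying between its endpoints, and there is no dual formulation when $p/2<1$. The correct observation is much simpler. Writing $F$ for the square function on the left-hand side, note that $F$ is supported in $\Omega$, so by H\"older
\[
\int_\Omega F^p\,d\mu \ \leq\ \Big(\int_\Omega F^2\,d\mu\Big)^{p/2}\mu(\Omega)^{1-p/2} \ \leq\ C\,\|b\|_{\bmo}^p\,\mu(\Omega),
\]
where the $L^2$ bound is just the Carleson condition itself. Taking $p$-th roots gives the claim for $p<2$. With these two replacements your outline is complete.
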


\section{Proof of Theorem \ref{thm main2}}
\label{sec:factorization}
\setcounter{equation}{0}

\subsection{Proof of (i)$\Longleftrightarrow$(ii)}

Suppose that $b\in \rm{bmo}(\rlz)$. Then we know that
for any fixed $x_2\in\R$, $b(x_1,x_2)$ as a function of $x_1$ is in the standard one-parameter BMO$(\R_+,\dmz)$, a symmetric result holds for the roles of $x_1$ and $x_2$ interchanged. Moreover, we further have that
\begin{align}\label{bmo norm equiv}
\|b\|_{\lbmo(\rlz)} \approx \sup_{x_1\in\R_+} \|b(x_1,\cdot)\|_{\bmo(\R_+,\dmz)} +
 \sup_{x_2\in\R_+} \|b(\cdot,x_2)\|_{\bmo(\R_+,\dmz)},
\end{align}
where the implicit constants are independent of the function $b$.

Next, we recall a recent result by a subset of the authors \cite{DLWY}, where they obtained that
\begin{align}\label{BMO norm equiv}
\|b\|_{\bmo(\R_+,\dmz)} \approx \|[b,\riz]\|_{L^2(\R_+,\dmz) \to L^2(\R_+,\dmz)},
\end{align}
where BMO$(\R_+,\dmz)$ is the standard one-parameter BMO space on $(\R_+,\dmz)$.

Combining \eqref{bmo norm equiv} and  \eqref{BMO norm equiv}, we obtain that
\begin{align*}
\|b\|_{\rm bmo(\rlz)} &\approx \sup_{x_1\in\R_+}    \|[b(x_1,\cdot),\rizt]\|_{L^2(\R_+,\dmz) \to L^2(\R_+,\dmz)} \\
&\hskip1cm   +
 \sup_{x_2\in\R_+}  \|[b(\cdot,x_2),\rizo]\|_{L^2(\R_+,\dmz) \to L^2(\R_+,\dmz)},
 \end{align*}
which implies that (i)$\Longleftrightarrow$(ii).

\subsection{Proof of (i)$\Longleftrightarrow$(iii)}

From \cite{bdt}, we know that $H^1(\R_+,\dmz)$ can be characterized via Bessel Riesz transforms, i.e.,
$f\in H^1(\R_+,\dmz)$ if and only if $f, \riz (f) \in L^1(\R_+,\dmz)$, and
$$ \|f\|_{H^1(\R_+,\dmz)}\approx \|f\|_{L^1(\R_+,\dmz)}+\|\riz (f)\|_{L^1(\R_+,\dmz)}.  $$
Then by the duality of $H^1(\R_+,\dmz)$ with $\bmo(\R_+,\dmz)$, and following the same approach as in \cite{fs}, we obtain the following decomposition for
$\bmo(\R_+,\dmz)$:

\smallskip
$b\in \bmo(\R_+,\dmz)$ if and only if there exist $f,g\in L^\infty(\R_+,\dmz)$ such that
\begin{align}\label{BMO decom}
b=f+\riz g.
\end{align}
Moreover,
$$\|b\|_{{\bmo(\R_+,\dmz)}}\approx \inf \{ \|f\|_{L^\infty(\R_+,\dmz)}+\|g\|_{L^\infty(\R_+,\dmz)} \},$$
where the infimum is taken over all possible decompositions of $b$ as in \eqref{BMO decom}. As a consequence, the argument (i)$\Longleftrightarrow$(iii)
follows from \eqref{bmo norm equiv} and \eqref{BMO decom}.

\subsection{Proof of (i)$\Longleftrightarrow$(iv)}

\subsubsection{Proof of (i)$\Longrightarrow$(iv)}

We point out that the proof of the upper bound of $[b,\rizo\rizt]$ follows directly from the property of bmo$(\rlz)$ and the $L^2$ boundedness of the Bessel Riesz transforms $\rizo$ and $\rizt$.

To see this, for $b\in {\rm bmo}(\rlz)$, we remark that
 $$[b, \rizo\rizt]= \rizo[b,\rizt] + [b,\rizo]\rizt.$$
 Then based on \eqref{bmo norm equiv} and the result of  \cite{DLWY}, we know that
\begin{align*}
&\big\|[b,\rizt] \big\|_{L^2(\rlz)\to L^2(\rlz)} + \big\| [b,\rizo]\big\|_{L^2(\rlz)\to L^2(\rlz)} \\
&\quad\ls \sup_{x_1\in\R_+} \|b(x_1,\cdot)\|_{\bmo(\R_+,\dmz)} +
 \sup_{x_2\in\R_+} \|b(\cdot,x_2)\|_{\bmo(\R_+,\dmz)}\\
&\quad\ls \|b\|_{{\rm bmo}(\rlz)}.
\end{align*}

Then, denote by ${\rm Id}_1$ and ${\rm Id}_2$ the identity operator on $L^2(\R_+,\dmz)$ for the first and second variable, respectively. We
further have
$$[b, \rizo\rizt]= (\rizo\otimes {\rm Id}_2) \circ   [b,\rizt] + [b,\rizo] \circ ({\rm Id}_1\otimes \rizt),$$
where we use $T_1 \circ T_2$ to denote the composition of two operators $T_1$ and $T_2$. Thus, we obtain that
\begin{align*}
&\big\| [  b, \rizo\rizt ] \big\|_{L^2(\rlz)\to L^2(\rlz)} \\
&\quad=\big\| (\rizo\otimes {\rm Id}_2) \circ   [b,\rizt] + [b,\rizo] \circ ({\rm Id}_1\otimes \rizt) \big\|_{L^2(\rlz)\to L^2(\rlz)}\\
& \quad\leq \big\| (\rizo\otimes {\rm Id}_2) \circ  [b,\rizt]  \big\|_{L^2(\rlz)\to L^2(\rlz)}  + \big\|[b,\rizo]\circ ({\rm Id}_1\otimes \rizt)  \big\|_{L^2(\rlz)\to L^2(\rlz)}\\
& \quad\leq   \big\| \rizo\|_{L^2(\rlz)\to L^2(\rlz)} \big\|[b,\rizt] \big\|_{L^2(\rlz)\to L^2(\rlz)} \\
&\quad\quad+ \big\| [b,\rizo]\big\|_{L^2(\rlz)\to L^2(\rlz)}\big\|\rizt \big\|_{L^2(\rlz)\to L^2(\rlz)}\\
&\quad\ls \|b\|_{{\rm bmo}(\rlz)},
\end{align*}
which implies (i)$\Longrightarrow$(iv).

\subsubsection{Proof of (i)$\Longleftarrow$(iv)}

We begin with some preliminaries.
\begin{prop}[\cite{DLWY}]
\label{p-estimate of riesz kernel}
The Riesz kernel $\riz(x,y)$ satisfies:
\begin{itemize}
  \item [(i)] There exist $K_1>2$ large enough and a positive constant $C_{K_1,\,\lz}$ such that
  for any $x,\,y\in\mathbb{R}_+$ with $y>K_1x$,
  \begin{equation}\label{lower bound of riesz kernel}
  \riz(x, y)\ge C_{K_1,\,\lz}\frac x{y^{2\lz+2}}.
  \end{equation}
  \item [(ii)]There exist $K_2\in(0, 1)$ small enough and a positive constant $C_{K_2,\,\lz}$ such that
  for any $x,\,y\in\mathbb{R}_+$ with $y<K_2x$,
  \begin{equation}\label{upper bound of riesz kernel}
  \riz(x, y)\le -C_{K_2,\,\lz}\frac1{x^{2\lz+1}}.
  \end{equation}
\item [(iii)] There exist ${K}_3\in(0, 1/2)$  small enough and a positive constant $C_{{K}_3,\lz}$
such that for any $x,\,y\in\mathbb{R}_+$ with $0<y/x-1<{K}_3$,
\begin{eqnarray*}
\riz(x,y)
&\geq& C_{{K}_3,\lz}{1\over x^\lambda y^\lambda}{1\over y-x}.
\end{eqnarray*}
\end{itemize}
\end{prop}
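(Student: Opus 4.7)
The plan is to read off each of the three bounds directly from the explicit integral representation
\[
\riz(x,y) = -\frac{2\lz}{\pi}\int_0^\pi \frac{(x-y\cos\theta)(\sin\theta)^{2\lz-1}}{(x^2+y^2-2xy\cos\theta)^{\lz+1}}\,d\theta,
\]
by isolating the dominant behaviour of the integrand in each of the three asymptotic regimes; this is essentially a collection of asymptotic exercises that go back to the harmonic analysis of the Bessel operator in \cite{ms,bdt,bfbmt}.

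For part (i) I would set $u:=x/y$, which is small by hypothesis, and factor $y^{2\lz+2}$ out of the denominator and $y$ out of $(x-y\cos\theta)$ to get
\[
\riz(x,y) = -\frac{2\lz}{\pi\, y^{2\lz+1}}\int_0^\pi \frac{(u-\cos\theta)(\sin\theta)^{2\lz-1}}{(1-2u\cos\theta+u^2)^{\lz+1}}\,d\theta.
\]
At $u=0$ the integrand reduces to $-\cos\theta(\sin\theta)^{2\lz-1}$, which is odd about $\theta=\pi/2$ and integrates to $0$. A first-order Taylor expansion in $u$ of the integrand, combined with the Beta-function identities for $\int_0^\pi(\sin\theta)^{2\lz-1}\,d\theta$ and $\int_0^\pi\cos^2\theta(\sin\theta)^{2\lz-1}\,d\theta$, extracts a leading term of the form $C_\lz\,u/y^{2\lz+1}=C_\lz\,x/y^{2\lz+2}$ with a positive constant; the $O(u^2)$ remainder can be absorbed by choosing $K_1$ sufficiently large, yielding \eqref{lower bound of riesz kernel}. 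Part (ii) is analogous but simpler: setting $v:=y/x$ small, we have $(x-y\cos\theta)=x(1-v\cos\theta)\approx x$ uniformly and $(x^2+y^2-2xy\cos\theta)\approx x^2$, so the leading contribution is
\[
-\frac{2\lz}{\pi\, x^{2\lz+1}}\int_0^\pi(\sin\theta)^{2\lz-1}\,d\theta,
\]
which is strictly negative, and the error from the $v$-expansion is controlled uniformly in $\theta\in[0,\pi]$ provided $K_2$ is small enough.

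Part (iii) is the genuinely delicate regime. Here $y/x$ is close to $1$, and the denominator
\[
x^2+y^2-2xy\cos\theta = (y-x)^2+2xy(1-\cos\theta)
\]
is small for $\theta$ near $0$, so the integrand is almost singular at the diagonal. I would split the integral at a threshold $\theta_0 \sim (y-x)/\sqrt{xy}$. On $[0,\theta_0]$ one uses $1-\cos\theta\asymp\theta^2/2$ and $\sin\theta\asymp\theta$, which reduces the local piece to an elementary integral of the form
\[
\int_0^{\theta_0}\frac{\theta^{2\lz-1}}{\bigl[(y-x)^2+xy\,\theta^2\bigr]^{\lz+1}}\,d\theta
\]
whose scaling gives exactly a term of order $1/(x^\lz y^\lz(y-x))$; note $(x-y\cos\theta)\approx x-y<0$ here, so the overall sign of the contribution is positive. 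A routine estimate shows that the outer piece $[\theta_0,\pi]$ is of strictly smaller order for $y/x-1<K_3$ small, giving the stated lower bound.

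The main obstacle is tracking signs and cancellations. In (i), the zeroth-order term vanishes identically by parity, so one must compute the first-order Beta-function coefficient carefully enough to confirm that it is \emph{positive} (this is the point where the precise constant $C_{K_1,\lz}$ is determined). In (iii), the antisymmetry of $(x-y\cos\theta)$ competes with the near-singularity at $\theta=0$, and one must verify that the local contribution near $\theta=0$ both dominates and has the correct sign. Once these sign issues are resolved and the thresholds $K_1$, $K_2$, $K_3$ are chosen to absorb the subleading terms, the three inequalities follow.
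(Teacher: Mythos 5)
This proposition is quoted in the paper with the attribution $[\cite{DLWY}]$; the paper itself gives no proof, so there is no in-text argument to compare against. Your attack on the explicit integral formula \eqref{riz} is, however, the natural one and it is essentially sound. The sign bookkeeping in part (i) does work out: at $u=x/y=0$ the integrand $-\cos\theta(\sin\theta)^{2\lz-1}$ integrates to zero by the antisymmetry about $\pi/2$, and the first-order coefficient is
\[
\int_0^\pi\bigl[1-2(\lz+1)\cos^2\theta\bigr](\sin\theta)^{2\lz-1}\,d\theta
= A - 2(\lz+1)\,\frac{A}{2\lz+1} = -\frac{A}{2\lz+1}<0,
\]
where $A:=\int_0^\pi(\sin\theta)^{2\lz-1}\,d\theta$ and I used $\int_0^\pi\cos^2\theta(\sin\theta)^{2\lz-1}d\theta=\tfrac{1}{2\lz+1}A$; this negative coefficient, against the overall prefactor $-\tfrac{2\lz}{\pi}u/y^{2\lz+1}$, produces the required strictly positive constant in front of $x/y^{2\lz+2}$. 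The Taylor remainder is uniformly $O(u^2)$ for $u$ bounded away from $1$, so $K_1$ can be chosen to absorb it. Part (ii) is as routine as you say.

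The one place where your sketch is imprecise, and where a referee would push back, is the claim in (iii) that the outer piece $[\theta_0,\pi]$ is ``of strictly smaller order.'' That is not quite right: writing $\theta^*:=\arccos(x/y)\asymp\sqrt{(y-x)/x}$ for the zero of the numerator, a direct estimate shows that the contribution from $[\theta_0,\theta^*]$ (where the numerator is still negative, hence the contribution to $R_{\Delta_\lz}(x,y)$ is still positive) is of the \emph{same} order $(x^\lz y^\lz(y-x))^{-1}$ as the contribution from $[0,\theta_0]$. What actually needs to be small relative to the main term is the piece $[\theta^*,\pi]$, on which the integrand has the unfavorable sign. That piece is bounded by a constant times $x^{-2\lz-1}\log\frac{x}{y-x}$, and the ratio to the main term is $\frac{y-x}{x}\log\frac{x}{y-x}\to 0$ as $y/x-1\to 0$, which is the real reason $K_3$ can be chosen small enough. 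So the conclusion stands, but the correct statement is that only the wrong-sign piece $[\theta^*,\pi]$ is subordinate, not all of $[\theta_0,\pi]$. Spelling out that distinction (and the sign change at $\theta^*$) is the content of what is delicate in (iii), and as written your remark glosses over exactly the point you identified as the ``genuinely delicate regime.''
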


\begin{defn}\label{def atom}
Suppose $q\in(1,\infty]$. A $q$-atom on $\rlz$ is a function
$a\in L^q(\rlz)$ supported on a rectangle $R \subset \rlz$ with $\|a\|_{L^q(\rlz)} \leq \mu_\lz(R)^{{1\over q}-1}$ and satisfying the cancellation property
$$  \int_{\R_+\times\R_+} a(x_1,x_2)d\mu_\lz(x_1,x_2)=0. $$
Let $Atom_q(\rlz)$ denote the collection of all such atoms.
\end{defn}
\begin{defn}\label{def little h1}
Suppose $q\in(1,\infty]$. The atomic Hardy space $h^{1,q}(\rlz)$ is defined as the set of functions of the form
\begin{align}\label{atomic decomposition}
f=\sum_i\alpha_i a_i
\end{align}
with $\{a_i\}_i\subset Atom_q(\rlz)$, $\{\alpha_i\}_i \subset \mathbb{C}$ and $\sum_i|\alpha_i| <\infty$.
Moreover, $h^{1,q}(\rlz)$  is equipped with the norm $\|f\|_{h^{1,q}(\rlz)}:= \inf \sum_i|\alpha_i|$ where
the infimum is taken over all possible decompositions of $f$ in the form \eqref{atomic decomposition}.
\end{defn}

For these little Hardy spaces, we first have
 the following conclusion.
\begin{thm}\label{t-atm Hard q indepen}
Let $q\in(1, \fz)$.
Then the spaces $\nhoq$ and $h^{1,\,\fz}(\rlz)$
coincide with equivalent norms.
\end{thm}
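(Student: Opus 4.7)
The plan is to prove the two inclusions separately.

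The direction $h^{1,\fz}(\rlz)\subseteq h^{1,q}(\rlz)$ will follow from a single application of H\"older's inequality: any $\fz$-atom $a$ on a rectangle $R$ satisfies
\begin{equation*}
\|a\|_{L^q(\rlz)}\leq\|a\|_{L^\fz(\rlz)}\,\mu_\lz(R)^{1/q}\leq\mu_\lz(R)^{1/q-1},
\end{equation*}
and has the same mean-zero property, hence is automatically a $q$-atom. This yields the inclusion with $\|\cdot\|_{h^{1,q}}\leq\|\cdot\|_{h^{1,\fz}}$.

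For the reverse direction $h^{1,q}(\rlz)\subseteq h^{1,\fz}(\rlz)$, it suffices to decompose an arbitrary $q$-atom into an $\ell^1$-convergent sum of multiples of $\fz$-atoms on sub-rectangles. Fix a $q$-atom $a$ supported on a rectangle $R$, and introduce the level sets
\begin{equation*}
\Omega_k:=\{(x_1,x_2)\in R:|a(x_1,x_2)|>2^k\mu_\lz(R)^{-1}\},\qquad k\geq 0,
\end{equation*}
with $E_k:=\Omega_k\setminus\Omega_{k+1}$ and $E_{-1}:=R\setminus\Omega_0$. Chebyshev's inequality combined with the $L^q$ size condition will give the key measure estimate $\mu_\lz(\Omega_k)\leq 2^{-kq}\mu_\lz(R)$. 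Using the doubling property of the product measure $\mu_\lz$, the next step is to carry out a Whitney-type decomposition of each $\Omega_k$ into countably many pairwise disjoint sub-rectangles $\{R_{k,j}\}_j\subseteq R$ covering $\Omega_k$ with $\sum_j\mu_\lz(R_{k,j})\ls\mu_\lz(\Omega_k)$.

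On each $R_{k,j}$ I will form the mean-zero piece
\begin{equation*}
\phi_{k,j}:=a\,\chi_{E_k\cap R_{k,j}}-c_{k,j}\,\chi_{R_{k,j}},\qquad c_{k,j}:=\frac{1}{\mu_\lz(R_{k,j})}\int_{E_k\cap R_{k,j}}a\,d\mu_\lz,
\end{equation*}
so that $\iint\phi_{k,j}\,d\mu_\lz=0$, $\supp\phi_{k,j}\subseteq R_{k,j}$, and $\|\phi_{k,j}\|_{L^\fz}\ls 2^k\mu_\lz(R)^{-1}$. Writing $\phi_{k,j}=\lambda_{k,j}A_{k,j}$ for a normalized $\fz$-atom $A_{k,j}$ on $R_{k,j}$ gives $|\lambda_{k,j}|\ls 2^k\mu_\lz(R_{k,j})/\mu_\lz(R)$, whence
\begin{equation*}
\sum_{k\geq 0}\sum_j|\lambda_{k,j}|\ls\sum_{k\geq 0}2^k\frac{\mu_\lz(\Omega_k)}{\mu_\lz(R)}\ls\sum_{k\geq 0}2^{k(1-q)}<\fz,
\end{equation*}
which is precisely where the hypothesis $q>1$ enters. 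The low-level remainder $a\chi_{E_{-1}}$, together with the accumulated correction constants, will then be recombined into a single additional $\fz$-atom on $R$ at cost $O(1)$ by invoking the global cancellation $\iint a\,d\mu_\lz=0$.

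The main obstacle I expect is twofold. First, carrying out the Whitney-type decomposition of the open level sets $\Omega_k\subseteq R$ into pairwise disjoint sub-rectangles with total $\mu_\lz$-measure comparable to $\mu_\lz(\Omega_k)$ is less immediate in the product setting than in one parameter; however, $(\rlz,d\mu_\lz)$ is doubling under the natural product metric, so a Coifman--Weiss style covering argument based on a product dyadic system adapted to the Bessel measure $dm_\lz$ in each variable should deliver the required rectangles. Second, verifying that the accumulated correction constants telescope into a single bounded remainder (rather than an unbounded overlap of $c_{k,j}\chi_{R_{k,j}}$'s over nested scales) requires a stopping-time/martingale style organization exploiting the monotonicity $\Omega_{k+1}\subseteq\Omega_k$; this is the technical heart of the argument but is standard within the Coifman--Weiss atomic theory on spaces of homogeneous type.
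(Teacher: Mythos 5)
The easy inclusion $h^{1,\infty}(\rlz)\subseteq h^{1,q}(\rlz)$ via H\"older is correct and is handled the same way in the paper (in one line). The gap is in the converse, and it is exactly where you flag the ``technical heart.'' Your one-shot decomposition writes $a=\sum_{k,j}\phi_{k,j}+g$, where $g=a\chi_{E_{-1}}+\sum_{k,j}c_{k,j}\chi_{R_{k,j}}$ is the accumulated correction. For a point $x\in E_{k_0}$ with $k_0$ large, $x$ lies in one Whitney piece $R_{k,j(k,x)}$ for \emph{every} $k\le k_0$ (the $\Omega_k$ nest, so the Whitney families at different levels overlap in $k$ even if each level is internally disjoint), and $|c_{k,j}|\lesssim 2^{k}\mu_\lz(R)^{-1}$, so $|g(x)|\lesssim\sum_{k\le k_0}2^k\mu_\lz(R)^{-1}\approx 2^{k_0}\mu_\lz(R)^{-1}\approx|a(x)|$. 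Thus $g$ is not $L^\infty$; it is only as good as $a$ itself, so it cannot be ``recombined into a single additional $\fz$-atom.'' The cancellation $\int a\,d\mu_\lz=0$ does not help: $g$ inherits mean zero and $L^q$-size control, but that merely reproduces (a constant multiple of) a $q$-atom, with no contraction, so the argument does not close. Two smaller issues: the sets $\Omega_k=\{|a|>2^k\mu_\lz(R)^{-1}\}$ are not open for $a\in L^q$, so a Whitney covering is not available as stated; and a Whitney-type covering of an arbitrary open set by \emph{rectangles} of all aspect ratios with bounded overlap is precisely the obstruction that makes multiparameter theory hard, so ``a product dyadic system adapted to $dm_\lz$ in each variable'' will not deliver it.

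The paper's proof resolves all three points at once by running an \emph{iterative} Calder\'on--Zygmund decomposition driven by the strong maximal function $\cm_{s,p}$ with $p\in(1,q)$, rather than a one-shot decomposition driven by $|a|$. At each stage $n$, the superlevel set $U^\alpha_{i_n}=\{\cm_{s,p}h_{i_n}>\alpha^{n+1}\}$ is automatically open, stays inside $3R_{i_n}$ by the localization lemma (Lemma~\ref{l7.5}), and is covered by Coifman--Weiss Whitney cubes (squares in the product quasi-metric, which are a subclass of rectangles) from Lemma~\ref{l-Whitney type covering lemma}. The ``good'' function $g_{i_n}$ equals $h_{i_n}$ off $U^\alpha_{i_n}$ — where $|h_{i_n}|\le\cm_{s,p}h_{i_n}\le\alpha^{n+1}$ a.e. — and equals local averages on $U^\alpha_{i_n}$, so it is genuinely $L^\infty$ with bound $MC_\lz\alpha^{n+1}$ and is immediately an $\fz$-atom; the ``bad'' parts $h_{i_n,k}$ are recursively decomposed, with the geometric decay $\sum_{i_n}\mu_\lz(3R_{i_n})\lesssim (M\alpha^{1-q})^n\mu_\lz(R_0)$ (using the $L^q\to L^{q,\infty}$ bound for $\cm_{s,p}$ and $q>1$) giving $\ell^1$-summability of the coefficients for $\alpha$ large. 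The iterative peeling is not cosmetic: it is exactly what prevents the correction constants from piling up into an unbounded function, which is the step your plan leaves unresolved. If you want to salvage your route, the honest fix is to replace level sets of $|a|$ by level sets of $\cm_{s,p}a$ and to convert the one-shot decomposition into the inductive one — at which point you have the paper's proof.
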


We first recall the following  Whitney type covering lemma from \cite{CW2}.

\begin{lemma}\label{l-Whitney type covering lemma} Suppose
$U\subsetneqq \R_+\times\R_+$ is an open bounded set and $\wz C\in[1,\fz)$.
Then there exists a sequence of cubes $\{Q_j\}_j$ satisfying
\begin{itemize}
  \item [{\rm (i)}] $U=\cup_j Q_j=\cup_j{\wz C}Q_j$;
  \item [{\rm (ii)}] there exists a positive constant $M$
  such that no point of $\R_+\times\R_+$ belongs to
  more than $M$ of the balls ${\wz C}Q_j$,
which is called as the \emph{$M$-disjointness} of $\{{\wz C}Q_j\}_j$;
  \item [{\rm (iii)}] $3{\wz C}Q_j\cap(\R_+\times\R_+\setminus U)
  \not=\emptyset$ for each $j$.
\end{itemize}
\end{lemma}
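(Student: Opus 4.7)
The plan is to realize $(\R_+\times\R_+,d\mu_\lz)$ as a space of homogeneous type and then invoke the classical Whitney-type covering lemma of Coifman--Weiss. Equip $\R_+\times\R_+$ with the product quasi-metric $d\big((x_1,x_2),(y_1,y_2)\big):=\max\{|x_1-y_1|,|x_2-y_2|\}$; the associated balls are precisely the axis-parallel cubes $I(x_1,r)\times I(x_2,r)$ that appear in the statement. Since $\mu_\lz = dm_\lz\otimes dm_\lz$ factors and $dm_\lz(x)=x^{2\lz}\,dx$ is doubling on $\R_+$ (standard in the Bessel setting, cf.\ \cite{ms, DLWY}), it follows immediately that $\mu_\lz$ is doubling with respect to $d$.

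Once this framework is in place, I would apply \cite[Chapter III]{CW2} essentially verbatim. For $x\in U$ set $r(x):=\dist(x,(\R_+\times\R_+)\setminus U)$, fix a small constant $\eta>0$ depending only on $\wz C$, and select a maximal family $\{B(x_j,\eta r(x_j))\}_j$ of pairwise disjoint balls with centers in $U$; define $Q_j$ to be the concentric cube of radius chosen so that $\wz C Q_j\subset U$. Property (i) follows from the maximality of the disjoint family together with the quasi-triangle inequality applied to $d$; property (ii) is a standard packing estimate obtained by comparing the $\mu_\lz$-measures of $B(x_j,\eta r(x_j))$ and $\wz C Q_j$ via the doubling constant of $\mu_\lz$; and property (iii) is automatic, since $3\wz C Q_j$ has radius at least $r(x_j)$, which is by definition the distance from $x_j$ to the complement of $U$.

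The main (and essentially the only) point requiring verification specific to our setting is the doubling of $\mu_\lz$ with respect to the product quasi-metric $d$, which reduces at once to the well-known doubling of the Bessel measure on $\R_+$. In particular, no new obstacle specific to the product Bessel setting arises, and the remaining packing arguments are a direct transcription of the Coifman--Weiss construction, with the overlap constant $M$ and the dilate factor $3$ depending only on $\wz C$ and on the doubling constant of $\mu_\lz$ (and not on $U$).
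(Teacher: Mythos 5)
The paper does not actually prove this lemma; it simply cites \cite{CW2}, implicitly relying on the fact that $(\R_+\times\R_+,d,\mu_\lz)$ with the max-quasi-metric is a space of homogeneous type. Your proposal makes this explicit and then rehearses the standard Coifman--Weiss Whitney construction, which is precisely the route the paper intends, so it is correct and essentially the same approach.
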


Now we establish a useful lemma which is a variant of \cite[Lemma (3.9)]{CW2}.
To this end, we recall the strong maximal function defined by setting,
for all $(x_1,x_2)\in \R_+\times\R_+$,
$$
\cm_sf(x_1,x_2):=\displaystyle\sup_{R\ni (x_1,x_2)}
\frac1{\mu_\lambda(R)}\int_{R}|f(y_1,y_2)|\,d\mu_\lambda(y_1,y_2).
$$
It is already known that $\cm_s$ is bounded on $\lpzd$, with $p\in(1,\fz)$.

\begin{lemma}\label{l7.5}
If $f\in L^1_{\loc}(\mu_\lambda)$ has support in $R_0:=I_0\times J_0$ centered at $(x^1_0, x_0^2)$, then there exists a positive constant $C_1$ such that
$$
U^\alpha:=\{(x_1, x_2)\in\rlz:\ \cm_sf(x_1, x_2)>\az\}\subset 3R_0
$$
whenever $\az\in(C_1m_{R_0}(|f|),\fz)$,
where $m_R(f)$ is as in \eqref{mean valu rect}.
\end{lemma}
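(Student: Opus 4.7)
The plan is to extend the one-parameter Coifman--Weiss argument of \cite[Lemma~(3.9)]{CW2} by iterating it in the two variables. Fix $(x_1, x_2) \notin 3R_0$; by the symmetry between $I_0$ and $J_0$, I may assume $x_1 \notin 3I_0$. The goal is to show $\cm_s f(x_1, x_2) \le C_1 m_{R_0}(|f|)$ for a constant $C_1$ depending only on the doubling constants of $m_\lz$.

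The key reduction is a Fubini-type splitting. For any rectangle $R = I \times J \ni (x_1, x_2)$ with $R \cap R_0 \ne \emptyset$ (otherwise the integral vanishes since $\supp f \subset R_0$), I write
\[
\frac{1}{\mu_\lz(R)} \int_R |f|\, d\mu_\lz = \frac{1}{m_\lz(I)} \int_I F_J(y_1)\, dm_\lz(y_1), \quad F_J(y_1) := \frac{1}{m_\lz(J)} \int_J |f(y_1, y_2)|\, dm_\lz(y_2),
\]
and observe that $F_J$ is supported in $I_0$. Applying the one-parameter Coifman--Weiss lemma on $(\R_+, dm_\lz)$ to $F_J$ --- which is valid since $x_1 \notin 3I_0$ --- yields
\[
\frac{1}{m_\lz(I)} \int_I F_J\, dm_\lz \le \frac{C}{m_\lz(I_0)\, m_\lz(J)} \int_{I_0 \times J} |f|\, d\mu_\lz.
\]
Taking the supremum over $J \ni x_2$ produces $\cm_s f(x_1, x_2) \le C M^{(2)} G(x_2)$, where $G(y_2) := \frac{1}{m_\lz(I_0)} \int_{I_0} |f(y_1, y_2)|\, dm_\lz(y_1)$ is supported in $J_0$ and satisfies $m_{J_0}(G) = m_{R_0}(|f|)$.

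If additionally $x_2 \notin 3J_0$, a second application of the one-parameter Coifman--Weiss lemma to $G$ in the $y_2$-variable gives $M^{(2)} G(x_2) \le C m_{J_0}(G) = C m_{R_0}(|f|)$, yielding the desired bound with $C_1 = C^2$. I expect the main technical point to be the mixed case $x_1 \notin 3I_0$ with $x_2 \in 3J_0$ (and its symmetric counterpart), where the second iteration is not directly available. For that case the plan is to return to a direct estimate on $\frac{1}{\mu_\lz(R)} \int_R |f|\, d\mu_\lz$: the constraints $x_1 \in I$, $x_1 \notin 3I_0$, and $I \cap I_0 \ne \emptyset$ force the centered interval $I$ to have radius at least that of $I_0$ and to satisfy $I_0 \subset 3I$, so $m_\lz(I) \gtrsim m_\lz(I_0)$ by doubling; a parallel geometric analysis of $J$ versus $J_0$, using $x_2 \in 3J_0$ together with the doubling of $m_\lz$ on $\R_+$ and the fact that a centered interval $J \ni x_2$ intersecting $J_0$ must sit in a bounded dilation of $J_0$ whenever $m_\lz(J) < m_\lz(J_0)$, then allows one to conclude the pointwise estimate.
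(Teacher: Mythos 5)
Your iterated Coifman--Weiss argument settles the case where both $x_1\notin 3I_0$ and $x_2\notin 3J_0$, and you have correctly put your finger on the mixed case as the real difficulty; but the direct geometric estimate you sketch for that case cannot close, because the desired pointwise bound actually fails there. Suppose $x_1\notin 3I_0$ and $x_2\in 3J_0$. Your analysis only forces $m_\lz(I)\gtrsim m_\lz(I_0)$, and nothing stops a competing rectangle $R=I\times J$ from having $J$ much shorter than $J_0$; for such $R$, the average $\mu_\lz(R)^{-1}\int_R|f|\,d\mu_\lz$ is, up to constants, an average over $I_0$ of one-parameter maximal averages of $|f(y_1,\cdot)|$ at $x_2$, and this is not controlled by $m_{R_0}(|f|)$. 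A concrete witness: take $f=\chi_{I_0\times J'}$ with $J'\subset J_0$ a very short interval centered at $x_0^2$, and the point $(x_1,x_0^2)$ with $x_1$ just outside $3I_0$. Taking $R=I\times J'$ with $I$ an interval containing both $x_1$ and $I_0$ of length comparable to $|x_1-x_0^1|$, one gets $\mu_\lz(R)^{-1}\int_R|f|=m_\lz(I_0)/m_\lz(I)$, a fixed positive quantity, while $m_{R_0}(|f|)=m_\lz(J')/m_\lz(J_0)\to 0$ as $|J'|\to 0$. So no $C_1$ can give the inclusion $U^\alpha\subset 3R_0$ for all $\alpha>C_1 m_{R_0}(|f|)$ when $\cm_s$ ranges over arbitrary rectangles, and the observation that $J$ lies in a bounded dilation of $J_0$ has no way to recover.

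For comparison, the paper's own proof elides the same point: it asserts that $(x_1,x_2)\notin 3R_0$ yields both $|x_1-x_0^1|\ge|I_0|$ and $|x_2-x_0^2|\ge|J_0|$, which is a logical slip, since the complement of $3I_0\times 3J_0$ is the union, not the intersection, of the two far conditions. After that assertion only the both-far case remains, where the paper restricts the supremum to $|I|\ge|I_0|$, $|J|\ge|J_0|$, uses $R_0\subset 3R$ and doubling --- essentially what your Fubini iteration recovers, so in the both-far regime the two arguments are really the same. The mixed regime is where the statement is genuinely problematic, and rather than trying to patch your argument there you should flag that the lemma as written needs to be modified: either the class of rectangles in $\cm_s$ must be restricted (e.g.\ bounded eccentricity relative to $R_0$), or the target set $3R_0$ must be enlarged to absorb the strips $3I_0\times\R_+$ and $\R_+\times 3J_0$, or extra structure on $f$ beyond $L^1_{\loc}$ must be invoked.
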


\begin{proof}
We only need to prove that, if $\az\in(C_1 m_{R_0}(|f|),\fz)$, then
$\rlz\setminus(3R_0)\subset\rlz\setminus U^\alpha$.

For any $x:=(x_1, x_2)\notin 3R_0$, we have $|x_1-x^1_0|\ge|I_0|$ and $|x_2-x^2_0|\ge |J_0|$. Then it is easy
to show that, for any rectangle $R\ni (x_1, x_2)$ satisfying $|I|\le |I_0|$ or $|J|\le |J_0|$,
$R\cap R_0=\emptyset$.
Then
$$
\cm_sf(x)=\sup_{I\ni x_1,\,|I|\ge |I_0|}\sup_{J\ni x_2,\,|J|\ge |J_0|}
\frac1{m_\lz(I)m_\lz(J)}\int_{I}\int_J|f(y_1,y_2)|\,d\mu_\lambda(y_1,\,y_2).
$$
For any rectangle $R:=I\times J\ni (x_1, x_2)$ such that
$|I|\ge |I_0|,  |J|\ge |J_0|$ and $R\cap R_0\neq\emptyset$,
it is easy to see that $R_0\subset  3R$.
This, together with $\supp (f)\subset R_0$ implies that
\begin{eqnarray*}
\frac1{\mu_\lambda(R)}\int_R|f(y_1,y_2)|\,d\mu_\lambda(y_1, y_2)
&&\le\frac{\mu_\lambda (R_0)}{\mu_\lambda(R)}\frac1{\mu_\lambda(R_0)}\int_{R_0}|f(y_1, y_2)|\,d\mu_\lambda(y_1, y_2)\\
&&\le\frac{\mu_\lambda(3R)}{\mu_\lambda(R)}m_{R_0}(|f|)
\le C_1m_{R_0}(|f|).
\end{eqnarray*}
Thus, we have $\cm_sf(x)\le C_1 m_{R_0}(|f|)$. Moreover,
if $\az> C_1 m_{R_0}(|f|)$, then $\az>\cm_sf(x_1, x_2)$,
that is, $(x_1, x_2)\notin U^\alpha$, which completes the proof of
Lemma \ref{l7.5}.
\end{proof}

\begin{proof}[Proof of Theorem \ref{t-atm Hard q indepen}]
We have observed that $h^{1,\,\fz}(\rlz)\subset \nhoq$ for
 $q\in(1, \fz)$. Thus, we only need to establish the
converse. We do so by showing that for
any $(1,q)$-atom $a$ with $\supp(a)\subset R_0$,
$b:=\mu_\lz(R_0)\cdot a$  has an atomic decomposition
$b=\sum_{i=0}^\fz \az_ib_i$, where each $b_i$, $i\in\zz_+$,
is a $(1,\fz)$-atom and
\begin{equation*}
\sum_{i=0}^\fz|\az_i|\ls 1.
\end{equation*}
We show this by induction. In order to state the inductive
hypothesis we first introduce some necessary notation.

For each positive integer $n$, let $\nn^n$ denote the $n$-fold
Cartesian product of the natural numbers $\nn$,
$\nn^0:=\{0\}$. We write $i_n$
to represent a general element of $\nn^n$.  The inductive hypothesis we establish is the following one:

{\it There exists a collection of rectangles
$\{R_{i_l}\},  i_l\in \nn^l$
for $l\in\{1, 2, \ldots\}, $ such that, for each $n\in\nn$,
\begin{eqnarray}\label{7.10}
b&=&\sum_{l=1}^{n-1}\sum_{i_l\in \nn^l}M  C_\lz \alpha^{l+1}
\mu_\lambda(3 R_{i_l}) a_{i_l}+  \sum_{i_n\in \nn^n}h_{i_n}
=:{\rm G}_n+{\rm H}_n,
\end{eqnarray}
where $p\in(1, q)$,
$\alpha\in(1, \fz)$ is large enough which depends on $p$, $q$ and
is to be fixed later,  $C_\lz$ satisfies for any rectangle $R\subset \rlz$, $\mu_\lz(9R)\le C_\lz\mu_\lz(R)$,
and
\begin{itemize}
  \item [{\rm (I)}]
 $a_{i_l}$ is a $(1,\fz)$-atom supported in
$3R_{i_l}$,  $l\in\{1,\,2,\,\ldots,\,n-1\}$, $i_l\in \nn^l$;
  \item [{\rm (II)}] $\cup_{i_n\in \nn^n}R_{i_n}\subset\{x\in\rlz:\,\,
  \cm_{s,\,p}b(x)>\alpha^n/2\}$, where $p\in(1, q)$ and $ \cm_{s,\,p}(f):=\lf[\cm_s(|f|^p)\r]^{1/p};$
  \item [{\rm (III)}]$\{3R_{i_l}\}$ is an $M^l$-disjoint collection;
  \item [{\rm (IV)}] the function $h_{i_n}$ is supported in $R_{i_n}$
  for each $i_n\in \nn^n$;
  \item [{\rm(V)}]$\int_{\rlz} h_{i_n}(x)\,d\mu_\lz(x)=0$ for each $i_n\in \nn^n$;
  \item [{\rm (VI)}]$|h_{i_n}(x)|\le |b(x)|+2 C_\lz ^{1/p}
  \alpha^n\chi_{R_{i_n}}(x)$ for each $i_n\in \nn^n$, where $\chi_{R_{i_n}}$ is the characteristic function of $R_{i_n}$;
  \item [{\rm (VII)}]$[m_{R_{i_n}}(|h_{i_n}|^p)]^{1/p}\le 2 C_\lz ^{1/p}\alpha^n$
  for each $i_n\in \nn^n$.
\end{itemize}}

We begin with proving that
\begin{eqnarray}\label{7.11}
{\rm I}_p&:=&\frac1{\mu_\lz(R_0)}\sum_{n=1}^\fz\sum_{i_n\in \nn^n}M  C_\lz
\alpha^{n+1}\mu_\lz \lf(3R_{i_n}\r)
\ls1.
\end{eqnarray}
Indeed, from (III), (II), $b=\mu_\lz(R_0)a$ and the boundedness of $\cm_{s,\,p}$ from $L^q(\rlz)$
to $L^{q,\,\fz}(\rlz)$, we deduce that
\begin{eqnarray}\label{7.12}
\sum_{i_n\in \nn^n}\mu_\lz \lf(3 R_{i_n}\r)&\le &
C_\lz  M^n\mu_\lz \lf(\bigcup_{i_n\in \nn^n}R_{i_n}\r)\\
&\le& C_\lz  M^n\mu_\lz \lf(\{x\in\rlz:\,\, \cm_{s,\,p} b(x)>\alpha^n/2\}\r)\nonumber\\
&\ls& C_\lz  M^n 2^q \alpha^{-nq}\|b\|_{L^q(\rlz)}^q\noz\\
&\ls &  C_\lz  M^n 2^q \alpha^{-nq}\mu_\lz(R_0).\nonumber
\end{eqnarray}
This fact implies that
\begin{eqnarray*}
{\rm I}_p&\ls &
M  C_\lz \sum_{n=1}^\fz\alpha^{n+1} C_\lz  M^n 2^q \alpha^{-nq}
\approx M  C^2_\lz \alpha 2^q\sum_{n=1}^\fz(\alpha^{1-q}M)^n\ls1,
\end{eqnarray*}
if $\alpha$ is large enough such that $\alpha^{1-q}M<1$, which gives \eqref{7.11}.

By (IV), (VII), H\"older's inequality and \eqref{7.12}, we obtain
\begin{eqnarray*}
\dint_{\rlz}|{\rm H}_n(x)|\,d\mu_\lz(x)
&&\le
 \sum_{i_n\in \nn^n}\dint_{\rlz} \lf|h_{i_n}(x)\r|\,d\mu_\lz(x)\nonumber\\
&&\le
  2 C_\lz ^{1/p}\alpha^n\sum_{i_n\in \nn^n}\mu_\lz \lf(R_{i_n}\r)\nonumber\\
&&\ls
  2 C_\lz ^{1/p}\alpha^n C_\lz  M^n 2^q \alpha^{-nq}\|b\|_{L^q(\rlz)}^q\nonumber\\
&&\ls\lf(M\alpha^{1-q}\r)^n\|b\|_{L^q(\rlz)}^q.\nonumber
\end{eqnarray*}
This, together with $q>1$, shows that
${\rm G}_n$ converges to $b$ in $L^1(\mu)$.
Then the representation \eqref{7.10} holds true in $\lozd$.

Let us show that the hypothesis is valid for $n=1$. Let
$$U^\alpha:=\lf\{(x_1, x_2)\in\rlz:\,\, \cm_{s,\,p} b(x_1, x_2)>\alpha\r\}.$$
Observe that
$m_{R_0}(|b|)\le1.$
By this and Lemma \ref{l7.5}, we find that
$U^\alpha \subset 3R_0$ provided $\alpha^p>C_1$ therein.
Moreover, $U^\alpha$
is a bounded open set.  By the boundedness of $\cm_{s,\,p}$ from
$L^q(\rlz)$ to $L^{q,\,\fz}(\rlz)$, we conclude that
there exists a positive constant $C_{p,\,q}$ such that,
\begin{eqnarray*}
\mu_\lz \lf(U^\alpha\r)\le C_{p,\,q}\alpha^{-q}\lf\|b\r\|_{L^q(\rlz)}^q
&\le&C_{p,\,q}\alpha^{-q}\mu_\lz(R_0).
\end{eqnarray*}
If  $\alpha^q> C_{p,\,q}$, then $\mu_\lz(U^\alpha)<\mu_\lz(R_0)<\fz$.
We see that,  $\rlz\setminus U^\alpha$ can not be empty.
Applying Lemma \ref{l-Whitney type covering lemma} 
with ${\wz C}=3$ therein,
we obtain
a
sequence of rectangles (cubes actually) $\{R_i\}_i$
satisfying (i) through (iii) therein. Let
$\chi_i:=\chi_{R_i}$,
$$\eta_i(x):=\left\{
  \begin{array}{ll}
    \dfrac{\chi_i(x)}{\sum_k\chi_k (x)}, \quad& \hbox{if $x\in U^\alpha$;}\\[12pt]
    0, \quad & \hbox{otherwise,}
  \end{array}
\right.
$$
$$g_0 (x):=\left\{
            \begin{array}{ll}
               b (x), \quad & \hbox{if $x\notin U^\alpha $;} \\[9pt]
              \dsum_im_{R_i}(\eta_i b )\chi_i(x),
              \quad & \hbox{if $x\in U^\alpha $}
            \end{array}
          \right.
$$
and
$$h_i(x):=\eta_i(x) b (x)-m_{R_i}(\eta_i b )\chi_i(x)$$
 for all $x\in\R_+\times\R_+$.  It follows that $ b =g_0 +\sum_ih_i$.
For almost every $x\notin U^\alpha $, we see that
$$|g_0 (x)|=\lf| b (x)\r|\le \cm_{s,\,p} b (x)\le \alpha.$$
If $x\in U^\alpha $, by the H\"older inequality,
(ii) and (iii) of Lemma \ref{l-Whitney type covering lemma} and the definition of $U^\alpha $, we obtain
\begin{eqnarray}\label{7.14}
|g_0 (x)|&\le& \sum_i\dfrac1{\mu_\lz(R_i)}\dint_{R_i}
\lf|\eta_i(y) b (y)\r|\,d\mu_\lz(y)\chi_i(x)\\
&\le&\sum_i\dfrac{\mu_\lz(9R_i)}{\mu_\lz(R_i)}\lf[\dfrac1{\mu_\lz(9R_i)}
\dint_{9R_i}\lf| b (y)\r|^p\,d\mu_\lz(y)\r]^{1/p}\chi_i(x)\nonumber\\
&\le&\sum_i  C_\lz \alpha\chi_i(x)\nonumber\\
&\le& M  C_\lz \alpha.\nonumber
\end{eqnarray}
Combining these two estimates, we conclude that, for almost every $x\in\R_+\times\R_+$,
\begin{equation}\label{7.15}
|g_0 (x)|\le M  C_\lz  \alpha.
\end{equation}
We have seen that $U^\alpha \subset 3R_0 $ and that,
for $x\notin U^\alpha $, $g (x)= b (x)$.
By $\supp( b )\subset3R_0 $, we conclude
that $\supp(g )\subset 3R_0 $. Also,
\begin{equation}\label{7.16}
\supp(h_i)\subset R_i
\end{equation}
 and
\begin{equation}\label{7.17}
\int_{\rlz} h_i(x)\,d\mu_\lz(x)=0
\end{equation}
 for any $i$. Since $\{R_i\}_i$ are $M$-disjoint,
  we  have
\begin{eqnarray}\label{7.18}
\sum_i\lf\|h_i\r\|_{L^1(\rlz)}&\le&2\sum_i\lf\|\eta_i b \r\|_{L^1(\rlz)}
\le2\sum_i\dint_{R_i}\lf| b (x)\r|\,d\mu_\lz(x)\\
&\le& 2M\dint_{U^\alpha }\lf| b (x)\r|\,d\mu_\lz(x)\nonumber\\
&\le&2M\mu_\lz (R_0).\nonumber
\end{eqnarray}

Observe that $\int_{\rlz} g_0(x)\,d m_\lz(x)=0.$ Thus,
\begin{equation}\label{7.19}
a_0:=g_0 /(M C_\lz \alpha\mu_\lz(3R_0 ))
\end{equation}
is a $(1, \fz)$-atom supported in $3R_0$, and we have
\begin{eqnarray*}
b&=& M C_\lz \alpha\mu_\lz(3R_0 ) a_0
+  \sum_ih_i.
\end{eqnarray*}
This shows (I).

Now observe that
$$\bigcup_i R_i=U^\alpha =\lf\{x\in\R_+\times\R_+:\,\,
\cm_{s,\,p} b (x)>\alpha\r\}\subset\lf\{x\in\R_+\times\R_+:\,\, \cm_{s,\,p} b (x)>\alpha/2\r\}.$$
This shows (II).

Since $0\le\eta_i\le 1$, arguing as in \eqref{7.14}, we obtain
\begin{eqnarray*}
\lf|h_i(x)\r|&\le& \lf|\eta_i(x) b (x)\r|
+\lf|m_{R_i}(\eta_i b )\r|\chi_i(x)\\
&\le&\lf| b (x)\r|+\lf[m_{R_i}(| b |^p)\r]^{1/p}\chi_i(x)\\
&\le&\lf| b (x)\r|+ C_\lz ^{1/p}\alpha\chi_i(x).
\end{eqnarray*}
Thus, (VI) holds true. From this together with
the definition of $U^\alpha$ and Lemma \ref{l-Whitney type covering lemma} (iii),
we further deduce that
\begin{eqnarray*}
\lf[m_{R_i }\lf(|h_{i} |^p\r)\r]^{1/p}&\le&
\lf[m_{R_i }\lf(| b |^p\r)\r]^{1/p}+ C_\lz ^{1/p}\alpha\\
&\le& \lf[\dfrac{\mu_\lz(9R_i)}{\mu_\lz(R_i)}
m_{9R_i }\lf(| b |^p\r)\r]^{1/p}+ C_\lz ^{1/p}\alpha\\
&\le& 2 C_\lz ^{1/p}\alpha,
\end{eqnarray*}
which implies (VII). Moreover, (III) is a consequence of Lemma \ref{l-Whitney type covering lemma}(ii),
and (IV) holds true by \eqref{7.16} and
(V) holds true by \eqref{7.17}. This shows that the induction holds true for $n=1$.

We now assume that the hypothesis holds true for $n$
and show that it is also valid for $n+1$.
Let
$$U^\az_{i_n} :=\lf\{x\in\rlz:\,\, \cm_{s,\,p}h_{i_n}(x)>\alpha^{n+1}\r\}.$$
By (IV) for $n$, we have
$\supp(h_{i_n})\subset R_{i_n}.$
Moreover, it follows, from (VII) for $n$,
provided $\alpha^p>2^pC_1C_\lz$, that
$$C_1m_{R_{i_n}}\lf(\lf|h_{i_n}\r|^p\r)
\le C_1C_\lz (2\alpha^n)^p<\alpha^{(n+1)p}.$$
By Lemma \ref{l7.5}, we see that
\begin{equation}\label{7.20}
U^\az_{i_n} =\lf\{x\in\rlz:\,\,
\cm_s\lf(\lf|h_{i_n}\r|^p\r)(x)>\alpha^{(n+1)p}\r\}\subset 3R_{i_n}.
\end{equation}
Let rectangles $\{R_{i_n,\,k}\}_k$
be a Whitney covering of $U^\az_{i_n} $.
From (i) and (ii) of Lemma \ref{l-Whitney type covering lemma} and
\eqref{7.20}, it follows that
$$\bigcup_k3R_{i_n,\,k}=U^\az_{i_n} \subset {3R_{i_n}}$$
and $\{{3R_{i_n,\,k}}\}_k$ is $M$-disjoint. Since, from (III) for $n$, we know
$\{{3R_{i_n}}\}_{i_n}$ are $M^n$-disjoint,
it follows that the totality of rectangles (cubes) in the family
$\{3R_{i_n,\,k}\}_{k,\,i_n}$ are $M^{n+1}$-disjoint.
This establishes (III) for $n+1$.

We now put
$$g_{i_n} (x):=\left\{
            \begin{array}{ll}
              h_{i_n}(x), \quad & \hbox{if $x\notin U^\az_{i_n} $;} \\
              \dsum_km_{R_{i_n,\,k}}(\eta^{i_n}_kh_{i_n})
              \chi_{R_{i_n,\,k}}(x), \quad & \hbox{if $x\in U^\az_{i_n} $}
            \end{array}
          \right.
$$
and
$$h_{i_n,\,k}:=\eta^{i_n}_kh_{i_n}-m_{R_{i_n,\,k}}
(\eta^{i_n}_kh_{i_n})\chi_{R_{i_n,\,k}},$$
where
$$\eta^{i_n}_k(x):=\chi_{R_{i_n,\,k}}(x)\Big/\sum_k\chi_{R_{i_n,\,k}}(x)$$
for $x\in U^\az_{i_n} $, and is $0$ if $x\notin U^\az_{i_n} $. If $x\in U^\az_{i_n} $, then
\begin{eqnarray*}
\lf|g_{i_n} (x)\r|&\le &\sum_{k}\lf|m_{R_{i_n,\,k}}
(\eta^{i_n}_kh_{i_n})\chi_{R_{i_n,\,k}}(x)\r|\\
&\le&\sum_{k}\dfrac{\mu_\lz(9R_{i_n,\,k} )}{\mu_\lz(R_{i_n,\,k})}
\dfrac1{\mu_\lz(9R_{i_n,\,k} )}
\dint_{9R_{i_n,\,k} }\lf|h_{i_n}(y)\r|\,d\mu_\lz(y)
\chi_{R_{i_n,\,k}}(x)\\
&\le& M C_\lz \alpha^{n+1},
\end{eqnarray*}
while if $x\notin U^\az_{i_n} $, then
$$\lf|g_{i_n} (x)\r|=\lf|h_{i_n}(x)\r|\le \cm_{s,\,p}h_{i_n}(x)\le \alpha^{n+1}.$$
In any case, we have
$$\lf\|g_{i_n} \r\|_{L^\fz(\mu)}\le M C_\lz \alpha^{n+1}.$$
Since the support of $h_{i_n}$ is within
$R_{i_n}\subset 3R_{i_n}$ and $U^\az_{i_n} \subset 3R_{i_n}$,
it follows that the support of $g_{i_n} $ is included in  $3R_{i_n}$. Moreover,
$\int_{\rlz} h_{i_n,\,k}(x)\,d\mu_\lz(x)=0$
(which shows that property (V) is valid for $n+1$).
By an argument used in the estimate for \eqref{7.18},
it is easy to see that
$$\dsum_k \lf\|h_{i_n,\,k}\r\|_{L^1(\rlz)}\le 2M\lf\|h_{i_n}\r\|_{L^1(\rlz)}.$$
It then follows from this that
$$h_{i_n}=g_{i_n} +\dsum_k h_{i_n,\,k}$$
is valid in $L^1(\mu)$ and $\int_{\rlz} g_{i_n} (x)\,d\mu_\lz(x)=0$.

Let
$$a_{i_n} :=g_{i_n} /\lf\{M  C_\lz \alpha^{n+1}
\lf[\mu_\lz \lf(3R_{i_n}\r)\r]\r\}.$$
Then $a_{i_n}$
is a $(1, \fz)$-atom supported in the rectangle $3R_{i_n}$.
From this, we deduce that \eqref{7.10} holds true for $n+1$ and so does  (I).
Property (IV) is trivially true.
Moreover, by the definition of $h_{i_n,\,k}$, (VI) for $n$
and Lemma \ref{l-Whitney type covering lemma}(iii), we conclude that
\begin{eqnarray*}
\lf|h_{i_n,\,k}(x)\r|&\le&\lf\{\lf|h_{i_n}(x)\r|
+\lf[ C_\lz \dfrac1{\mu_\lz(9R_{i_n,\,k} )}
\dint_{9R_{i_n,\,k} }\lf|h_{i_n}(x)\r|^p\,d\mu_\lz(x)\r]^{1/p}\r\}
\chi_{R_{i_n,\,k}}(x)\\
&\le&\lf\{\lf| b (x)\r|+2 C_\lz ^{1/p}\alpha^n+ C_\lz ^{1/p}
\alpha^{n+1}\r\}\chi_{R_{i_n,\,k}}(x)\\
&\le&\lf\{\lf| b (x)\r|+2 C_\lz ^{1/p}\alpha^{n+1}\r\}\chi_{R_{i_n,\,k}}(x)
\end{eqnarray*}
if $\alpha>2$. This establishes (VI) for $n+1$.

On the other hand, by the definitions of $h_{i_n,\,k}$
and $U^\az_{i_n} $, we have
\begin{eqnarray*}
\lf[m_{R_{i_n,\,k}}\lf(\lf|h_{i_n,\,k}\r|^p\r)\r]^{1/p}&\le& 2
\lf[m_{R_{i_n,\,k}}\lf(\lf|h_{i_n}\r|^p\r)\r]^{1/p}\\
&\le& 2\lf[ C_\lz m_{9R_{i_n,\,k}}
\lf(\lf|h_{i_n}\r|^p\r)\r]^{1/p}\\
&\le&2 C_\lz ^{1/p}\alpha^{n+1},
\end{eqnarray*}
which shows (VII).

Finally, from (VI) for $n$, we deduce that
$$\cm_{s,\,p}(h_{i_n})(x)\le \cm_{s,\,p}( b )(x)+2 C_\lz ^{1/p}\alpha^{n}.$$
Thus, if $x\in U^\az_{i_n} $, then
$$\alpha^{n+1}<\cm_{s,\,p}(h_{i_n})(x)\le \cm_{s,\,p}( b )(x)
+2 C_\lz ^{1/p}\alpha^{n}.$$
It follows that, if $\alpha> 4 C_\lz ^{1/p}$, then
$\alpha^{n+1}/2<\cm_{s,\,p}( b )(x).$
Thus,
$$
\bigcup_{i_n,\,k}R_{i_n,\,k}=\bigcup_{i_n}\bigcup_kR_{i_n,\,k}
\subset \bigcup_{i_n}U^\az_{i_n}
\subset\lf\{x\in\rlz:\,\, \cm_{s,\,p}(b)(x)>\alpha^{n+1}/2\r\}
$$
and (II) is valid for $n+1$.
This finishes the proof of Theorem \ref{t-atm Hard q indepen}.
\end{proof}

Based on Theorem \ref{t-atm Hard q indepen}, we now denote by
$h^1(\rlz)$ the little Hardy space, and  we have the following result on the duality of $h^1(\rlz)$ with $\lbmo(\rlz)$.
\begin{thm} \label{thm dual}
The predual of bmo$(\rlz)$ is $h^{1}(\rlz)$.
\end{thm}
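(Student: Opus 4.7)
The plan is to establish the duality $(h^1(\rlz))^* = \lbmo(\rlz)$ by adapting the classical Coifman--Weiss argument from \cite{CW2}, exploiting the flexibility provided by Theorem \ref{t-atm Hard q indepen}: I will use $(1,2)$-atoms to bound the pairing against $\lbmo$ symbols from above, and $(1,\infty)$-atoms to extract the $\lbmo$ norm from a given functional. As a preliminary, I would first establish the rectangle $L^q$-oscillation estimate
\begin{align*}
\left(\frac{1}{\mu_\lz(R)}\int_R |b(x_1,x_2) - m_R(b)|^q \, d\mu_\lz(x_1,x_2)\right)^{1/q} \ls \|b\|_{\lbmo(\rlz)}
\end{align*}
for every $b \in \lbmo(\rlz)$, every rectangle $R = I \times J$, and every $q \in [1,\infty)$. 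The proof decomposes $b - m_R(b) = (b(x_1,x_2) - m_I(b(\cdot,x_2))) + (m_I(b(\cdot,x_2)) - m_R(b))$; the first summand is controlled, for each fixed $x_2$, by the one-parameter John--Nirenberg inequality applied to $b(\cdot, x_2) \in \bmo(\R_+, dm_\lz)$, and the second summand, viewed as a function of $x_2$ alone, also lies in $\bmo(\R_+, dm_\lz)$ with norm controlled by $\|b\|_{\lbmo}$ via \eqref{bmo norm equiv} and is estimated the same way.

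For the inclusion $\lbmo(\rlz) \hookrightarrow (h^1(\rlz))^*$, given $b \in \lbmo(\rlz)$ I would define $\ell_b(f) := \int_\rlz fb \, d\mu_\lz$ on finite linear combinations of atoms. For any $(1,2)$-atom $a$ supported on a rectangle $R$, the cancellation of $a$ combined with Cauchy--Schwarz and the preliminary bound give
\begin{align*}
|\ell_b(a)| = \left|\int_R a\,(b - m_R(b))\, d\mu_\lz\right| \leq \|a\|_{L^2(\rlz)} \left(\int_R |b-m_R(b)|^2 \, d\mu_\lz\right)^{1/2} \ls \|b\|_{\lbmo(\rlz)}.
\end{align*}
Since Theorem \ref{t-atm Hard q indepen} identifies $h^{1,2}(\rlz)$ with $h^1(\rlz)$ and finite sums of atoms are dense, $\ell_b$ extends to a bounded functional on $h^1(\rlz)$ with $\|\ell_b\|_{(h^1)^*} \ls \|b\|_{\lbmo(\rlz)}$.

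For the reverse inclusion, given $\ell \in (h^1(\rlz))^*$, I would apply the Riesz representation theorem on $L^2_0(R) := \{g \in L^2(R): \int_R g\, d\mu_\lz = 0\}$ for each rectangle $R$. Every $g \in L^2_0(R)$, once normalized, is a $(1,2)$-atom, so $\|g\|_{h^1(\rlz)} \ls \mu_\lz(R)^{1/2}\|g\|_{L^2(R)}$ and the restriction $\ell|_{L^2_0(R)}$ is bounded on $L^2(R)$. This produces $b_R \in L^2(R)$, unique modulo additive constants, satisfying $\ell(g) = \int_R g\, b_R\, d\mu_\lz$ for every $g \in L^2_0(R)$. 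Exhausting $\R_+ \times \R_+$ by an increasing sequence of rectangles and using the uniqueness-modulo-constants to match choices on overlaps, the $b_R$'s patch into a function $b \in L^1_{\loc}(\rlz)$ representing $\ell$. To recover $\|b\|_{\lbmo}$ I would test $\ell$ against the atoms
\begin{align*}
a_R := \frac{1}{2\mu_\lz(R)}\bigl(\sigma_R - m_R(\sigma_R)\bigr)\chi_R, \qquad \sigma_R := \operatorname{sgn}(b - m_R(b)),
\end{align*}
which are (constant multiples of) $(1,\infty)$-atoms with $\|a_R\|_{h^1} \ls 1$, and for which a direct computation yields $\ell(a_R) = \frac{1}{2\mu_\lz(R)}\int_R |b - m_R(b)| \, d\mu_\lz$. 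This gives $\|b\|_{\lbmo(\rlz)} \ls \|\ell\|_{(h^1)^*}$, completing the duality.

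The main obstacle is the preliminary $L^q$-oscillation estimate: although morally a routine consequence of the one-parameter John--Nirenberg inequality applied in each variable, its rigorous proof requires the Fubini-type identity $m_R(b) = m_J(m_I(b(\cdot, x_2)))$ together with the observation that the slice averages $x_2 \mapsto m_I(b(\cdot, x_2))$ inherit a $\bmo(\R_+, dm_\lz)$ estimate from the $\lbmo$ norm of $b$. Once this estimate is in hand, the rest is standard Coifman--Weiss machinery transplanted to the Bessel setting, with the interchange of atoms of different integrabilities supplied by Theorem \ref{t-atm Hard q indepen}.
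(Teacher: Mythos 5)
Your proposal is correct and is essentially the approach the paper has in mind: the paper simply cites \cite{CW2} and \cite{J} for the ``standard argument'' establishing $(h^{1,2}(\rlz))^{*}=\lbmo(\rlz)$ and then invokes Theorem~\ref{t-atm Hard q indepen} to identify $h^{1,2}(\rlz)$ with $h^{1,\infty}(\rlz)$, whereas you have written out the details of that standard Coifman--Weiss machinery (the John--Nirenberg-type $L^q$-oscillation estimate for $\lbmo$, the $(1,2)$-atom pairing for the embedding $\lbmo\hookrightarrow(h^1)^{*}$, and the Riesz representation plus sign-atom testing for the reverse). The two proofs are the same argument at different levels of explicitness, with Theorem~\ref{t-atm Hard q indepen} serving as the crucial ingredient in both.
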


\begin{proof}

The duality of $h^{1,2}(\rlz)$ with bmo$(\rlz)$ follows from a standard argument, see for example \cite{CW2} (see also \cite[Section II, Chapter 3]{J}). Hence, by Theorem \ref{t-atm Hard q indepen}, the  predual of bmo$(\rlz)$ is $h^{1,\infty}(\rlz)$.
\end{proof}

Our main result of this section is the following.

\begin{thm}\label{thm: weak factotrization}
For every $f\in h^{1}(\rlz)$, there exist sequences $\{\alpha_j^k\}_{j}\in\ell^1$ and functions
$g_j^k,h^k_j\in L^\infty(\rlz)$ with compact support, such that
\begin{align}\label{represent of H1}
f=\sum_{k=1}^\fz\sum_{j=1}^\fz\alpha^k_j\, \Pi\lf(g^k_j,h^k_j\r)
\end{align}
in the sense of $h^1(\rlz)$, where $\Pi(g,h )$ is the bilinear form defined as
\begin{align}\label{bilinear form}
\Pi(g,h) :=  g\cdot \rizo\rizt (h) - h\cdot \wrizo\wrizt (g),
\end{align}
where $\wrizo$ and $\wrizt$ are the adjoints of $\rizo$ and $\rizt$, respectively.

Moreover, we have that 
\begin{eqnarray*}
\|f\|_{h^1(\rlz)}\approx\inf\Big\{\sum_{k=1}^\fz\sum_{j=1}^\fz\lf|\az^k_j\,\r|\lf\|g^k_j\r\|_{L^2(\rlz)}\lf\|h^k_j\r\|_{L^2(\rlz)}\Big\},
\end{eqnarray*}
where the infimum is taken over all representations of $f$ in the form \eqref{represent of H1} and the implicit constants
are independent of $f$.
\end{thm}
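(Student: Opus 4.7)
The plan is to combine an $h^1$–upper bound on the bilinear form $\Pi$ with a constructive, atomic-level factorization, glued together by geometric iteration.

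First I would establish the upper bound $\|\Pi(g,h)\|_{h^1(\rlz)} \lesssim \|g\|_{L^2(\rlz)}\|h\|_{L^2(\rlz)}$. Since $\rizo$ and $\rizt$ act on independent variables, they commute and each admits a formal $L^2$–adjoint. A direct pairing computation then yields, for $b\in\lbmo(\rlz)$ and $g,h\in L^\infty$ of compact support, the key identity
$$
\langle b,\Pi(g,h)\rangle \;=\; \langle [b,\rizo\rizt]\,h,\; g\rangle.
$$
The direction (i)$\Longrightarrow$(iv) of Theorem~\ref{thm main2} has already been proved above without any use of factorization, yielding $\|[b,\rizo\rizt]\|_{L^2\to L^2}\lesssim \|b\|_{\lbmo(\rlz)}$. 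Taking a supremum over $b$ and invoking the $h^1$–$\lbmo$ duality (Theorem~\ref{thm dual}) gives the desired upper bound, and in particular the inequality $\|f\|_{h^1(\rlz)}\lesssim \sum_{k,j}|\az^k_j|\,\|g^k_j\|_{L^2}\|h^k_j\|_{L^2}$ for any representation \eqref{represent of H1}.

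For the actual factorization, by Theorem~\ref{t-atm Hard q indepen} we have $h^1(\rlz)=h^{1,2}(\rlz)$ with equivalent norms, so it suffices to decompose an arbitrary $(1,2)$–rectangular atom $a$ supported on a rectangle $R=I\times J$. I would follow the Coifman--Rochberg--Weiss/Uchiyama strategy and produce $g_0,h_0\in L^\infty$ of compact support with $\|g_0\|_{L^2}\|h_0\|_{L^2}\lesssim 1$ and
$$
a \;=\; \Pi(g_0,h_0) \;+\; \tau,
$$
where $\tau$ admits a $(1,2)$–atomic decomposition $\tau=\sum_i \beta_i a_i$ with $\sum_i|\beta_i|\le \eta$ for some fixed $\eta<1$. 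Iterating the same step on each new atom $a_i$ and summing the resulting geometric series produces the desired representation with absolutely convergent coefficients, and the norm equivalence then follows by combining this with Step~1. Concretely, I would take $g_0:=a$ and pick a distant rectangle $R'=I'\times J'$ with $\mu_\lz(R')\approx \mu_\lz(R)$ positioned so that, for all $(x_1,x_2)\in R$ and $(u_1,u_2)\in R'$, both $u_1>K_1 x_1$ and $u_2>K_1 x_2$, where $K_1$ is the constant from Proposition~\ref{p-estimate of riesz kernel}(i); then $h_0$ is a properly normalised multiple of $\chi_{R'}$, corrected if necessary to impose the needed cancellation. Proposition~\ref{p-estimate of riesz kernel}(i) applied in each variable separately makes $\rizo\rizt(h_0)$ essentially a nonzero constant $c$ on $R$ plus a controllable perturbation $\delta_R$, so $g_0\cdot\rizo\rizt(h_0) = c\,a + a\cdot \delta_R$ on $R$; meanwhile, $h_0\cdot\wrizo\wrizt(g_0)$ is supported on $R'$, and the cancellation $\int a\,d\mu_\lz=0$ combined with kernel smoothness lets one write it as a small combination of $(1,2)$–atoms.

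The main obstacle is the quantitative bookkeeping that makes the iteration converge: one must show that the two error terms, $a\cdot \delta_R$ on $R$ and $h_0\cdot\wrizo\wrizt(g_0)$ on $R'$, together form a $(1,2)$–atomic block with total $\ell^1$-mass $\le \eta<1$. This is where the sharp kernel information is used in an essential way: the lower bound of Proposition~\ref{p-estimate of riesz kernel}(i) is needed to keep $c$ uniformly bounded below (so that rescaling by $c^{-1}$ does not blow up the norms), the smoothness and size estimates of the Bessel Riesz kernel are used to bound $\delta_R$ on $R$ and the tail on $R'$, and the cancellation of $a$ is what generates the extra decay on $R'$. Once this per-atom estimate is secured with $\eta<1$, the iteration converges and delivers the series representation \eqref{represent of H1} together with the reverse inequality to the one from Step~1, completing the proof.
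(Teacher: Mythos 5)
Your overall strategy is the same as the paper's: Proposition~\ref{t-H1 estimate of pi} supplies the upper bound $\|\Pi(g,h)\|_{h^1(\rlz)}\lesssim\|g\|_{L^2}\|h\|_{L^2}$ via the $h^1$--$\lbmo$ duality and the already-proved boundedness of $[b,\rizo\rizt]$; then one approximates a single rectangular atom by a single bilinear form up to a small error, and iterates the geometric series. That matches Theorem~\ref{thm:recfac} and the proof of Theorem~\ref{thm: weak factotrization}. (Incidentally, the ``correction to impose cancellation'' on $h_0$ is unnecessary: $\int\Pi(g,h)\,d\mu_\lambda=0$ automatically since $(\rizo\rizt)^*=\wrizo\wrizt$.)

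The genuine gap is in the per-atom construction: relying exclusively on Proposition~\ref{p-estimate of riesz kernel}(i) with $R'$ chosen so that $u_i>K_1 x_i$ and $\mu_\lambda(R')\approx\mu_\lambda(R)$ does not produce a lower bound that is uniformly bounded away from zero. With this choice $y_{0,i}\approx x_{0,i}$, and the estimate $\riz(x,u)\gtrsim x/u^{2\lambda+2}$ gives, after integrating against $dm_\lambda$ over $I(y_{0,i},r_i)$, only $c_i\approx x_{0,i}r_i/y_{0,i}^{2}\approx r_i/x_{0,i}$ in each variable. Hence $c\approx(r_1/x_{0,1})(r_2/x_{0,2})\to 0$ as $x_{0,i}/r_i\to\infty$. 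Since $\|g_0\|_{L^2}\|h_0\|_{L^2}\approx(\mu_\lambda(R')/\mu_\lambda(R))^{1/2}/c\approx c^{-1}$ in your setup, the product of $L^2$ norms is \emph{not} bounded by a universal constant, and the claimed per-step bound $\|g_0\|_{L^2}\|h_0\|_{L^2}\lesssim 1$ fails exactly for atoms supported far from the origin. Conversely, when $x_{0,i}\approx r_i$, a fixed-multiple shift $y_{0,i}=K_1x_{0,i}$ does not make $|y_{0,i}-x_{0,i}|/r_i$ large, so the error terms $w_1,w_2$ cannot be driven below $\eta$. One cannot simultaneously arrange $\mu_\lambda(R')\approx\mu_\lambda(R)$, a universal lower bound on $\wrizo\wrizt(g)$ at the center, and a large separation in units of $r_i$, using part~(i) alone.

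This is precisely why the paper's proof of Theorem~\ref{thm:recfac} runs a four-case analysis according to whether $x_{0,i}\le 2\wz M r_i$ or $x_{0,i}>2\wz M r_i$, with $\wz M=\wz M(\epsilon)$ large. When $x_{0,i}\le 2\wz M r_i$, the rectangle $\wz R$ is moved \emph{outward} by $2\wz M K_0 r_i$ and Proposition~\ref{p-estimate of riesz kernel}(ii) is used; when $x_{0,i}>2\wz M r_i$, $\wz R$ is moved \emph{inward} by $\wz M r_i/K_0$, where the near-diagonal lower bound of Proposition~\ref{p-estimate of riesz kernel}(iii), $\riz(x,y)\gtrsim (xy)^{-\lambda}(y-x)^{-1}$, is the only available tool. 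In either case one gets $|\wrizo\wrizt(g)(x_{0,1},x_{0,2})|\gtrsim \wz M^{-2}$, not $\gtrsim 1$; correspondingly $\|g\|_{L^2}\|h\|_{L^2}\lesssim \wz M^{2+2\lambda}$, and the error is bounded by $C\log_2\wz M/\wz M<\epsilon$. The cost $C(\epsilon)=\wz M^{2+2\lambda}$ is then absorbed in the iteration because $\epsilon$ is fixed. To repair your argument you need to give up the normalization $\|g_0\|_{L^2}\|h_0\|_{L^2}\lesssim 1$ in favor of $\lesssim C(\eta)$, replace the single use of Proposition~(i) by the scale-dependent choice of $\wz R$ using parts (ii) and (iii), and verify the error bound of Lemma~\ref{lem:H1Est} in each regime.
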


To prove Theorem \ref{thm: weak factotrization}, we study the property of the bilinear form $\Pi(f,g)$ as defined in \eqref{bilinear form}, which connects to the commutator $\left[  b, \rizo\rizt \right]$.
%
%
%

\begin{prop}\label{t-H1 estimate of pi}
For every $g,h \in L^\infty(\rlz)$ with compact support, the bilinear form $\Pi(g,h)$
is in $h^1(\rlz)$
with the norm satisfying
\begin{align}\label{boundedness of bilinear form}
\| \Pi(g,h) \|_{h^1(\rlz)} \le C \|g\|_{L^2(\rlz)} \| h \|_{L^2(\rlz)}.
\end{align}

\end{prop}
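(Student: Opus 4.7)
The plan is to prove this proposition via duality with $\lbmo(\rlz)$, exploiting that the dual pairing $\langle \Pi(g,h), b\rangle$ can be recast in terms of the iterated commutator $[b,\rizo\rizt]$, whose $L^2$-boundedness has already been established in the (i)$\Longrightarrow$(iv) direction above.

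First I would verify that $\Pi(g,h)$ belongs to $h^1(\rlz)$ a priori, so that $h^1$--$\lbmo$ duality can be applied quantitatively (with equality rather than only a one-sided bound). Since $g,h \in L^\infty(\rlz)$ have compact support, the $L^2$-boundedness of the Bessel Riesz transforms places $\rizo\rizt h$ and $\wrizo\wrizt g$ in $L^2(\rlz)$, so each of the products $g \cdot \rizo\rizt h$ and $h \cdot \wrizo\wrizt g$ is a compactly supported $L^2$ function. A direct manipulation of adjoints gives $\int_{\rlz}\Pi(g,h)\,d\mu_\lz = 0$, so $\Pi(g,h)$ is a constant multiple of a $(1,2)$-rectangle atom supported in any rectangle containing $\supp g \cup \supp h$; in particular, $\Pi(g,h) \in h^{1,2}(\rlz) = h^1(\rlz)$ by Theorem \ref{t-atm Hard q indepen}.

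Next, for arbitrary $b \in \lbmo(\rlz)$, I would compute the dual pairing by pushing $\rizo\rizt$ back onto its adjoint in the first term:
\begin{align*}
\langle \Pi(g,h), b\rangle
 &= \int_{\rlz} gb \cdot \rizo\rizt h \, d\mu_\lz - \int_{\rlz} hb \cdot \wrizo\wrizt g \, d\mu_\lz \\
 &= \int_{\rlz} gb \cdot \rizo\rizt h \, d\mu_\lz - \int_{\rlz} g \cdot \rizo\rizt(bh) \, d\mu_\lz \\
 &= \int_{\rlz} g \cdot [b,\rizo\rizt]h \, d\mu_\lz.
\end{align*}
Applying the Cauchy--Schwarz inequality together with the commutator bound $\|[b,\rizo\rizt]\|_{L^2(\rlz)\to L^2(\rlz)} \ls \|b\|_{\lbmo(\rlz)}$ already proved in (i)$\Longrightarrow$(iv) then yields
$$|\langle \Pi(g,h), b\rangle| \ls \|b\|_{\lbmo(\rlz)}\,\|g\|_{L^2(\rlz)}\,\|h\|_{L^2(\rlz)}.$$

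Finally, by the duality $(h^1(\rlz))^* = \lbmo(\rlz)$ established in Theorem \ref{thm dual}, taking the supremum over $\|b\|_{\lbmo(\rlz)}\le 1$ delivers exactly the desired estimate \eqref{boundedness of bilinear form}. The main technical point, modest but essential, is the a priori step: without knowing that $\Pi(g,h)\in h^1(\rlz)$, one only obtains a lower bound from the pairing. This is precisely why the compact support hypothesis on $g$ and $h$ is imposed, and why the proof routes through the observation that $\Pi(g,h)$ is automatically a multiple of a rectangle atom.
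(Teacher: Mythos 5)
Your proof is correct and follows essentially the same route as the paper: both compute the dual pairing $\langle b, \Pi(g,h)\rangle = \langle g, [b,\rizo\rizt]h\rangle$, invoke the commutator bound from (i)$\Longrightarrow$(iv), and close via the $h^1$--$\lbmo$ duality of Theorem~\ref{thm dual}. The only difference is that you make the a priori step explicit (observing that $\Pi(g,h)$ is a compactly supported $L^2$ function with mean zero, hence a scalar multiple of a $(1,2)$-rectangle atom and therefore already in $h^{1,2}(\rlz)=h^1(\rlz)$), whereas the paper states the membership more tersely; your version is a useful clarification, since the duality $\|\cdot\|_{h^1}\approx\sup_{\|b\|_{\lbmo}\le 1}|\langle b,\cdot\rangle|$ is only guaranteed once membership in $h^1$ is known.
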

\begin{proof}
First, it is clear that for every $g,h \in L^\infty(\rlz)$ with compact support, the bilinear form $\Pi(f,g)$
is in $L^1(\rlz)$ with compact support and satisfies
$$ \int_{\R_+\times\R_+} \Pi(g,h) (x_1,x_2)\dmz(x_1)\dmz(x_2)=0.$$

Moreover, for $b\in {\rm bmo}(\rlz)$ and for every $g,h \in L^\infty(\rlz)$ with compact support, we have
\begin{align}\label{PiNorm}
\left\vert \left \langle b, \Pi(g,h)\right \rangle_{L^2(\rlz)} \right\vert
& =\left\vert \left \langle b, g \rizo\rizt h - h \wrizo\wrizt g \right \rangle_{L^2(\rlz)} \right\vert\\
& = \left\vert \left \langle  \left[  b, \rizo\rizt \right] f ,g \right \rangle_{L^2(\rlz)}\right\vert\nonumber\\
& \lesssim \|b \|_{{\rm bmo}(\rlz)} \|f \|_{L^2(\rlz)} \|g \|_{L^2(\rlz)},\nonumber
\end{align}

This, together with the duality result as in Theorem \ref{thm dual}, implies that for every $g,h \in L^\infty(\rlz)$ with compact supports, the bilinear form $\Pi(f,g)$ is in
$h^1(\rlz)$. Moreover, the $h^1(\rlz)$ norm of  $\Pi(f,g)$ satisfies \eqref{boundedness of bilinear form}. In fact, we point out that from the fundamental fact as in \cite[Exercise 1.4.12 (b)]{Gra}, we have
$$ \|\Pi(g,h)\|_{ h^1(\rlz)} \approx \sup_{b:\ \|b\|_{ {\rm bmo}(\rlz)}\leq1  }
\big| \langle  b, \Pi(g,h) \rangle_{\ltp} \big|, $$
which, together with \eqref{PiNorm}, immediately implies that
\eqref{boundedness of bilinear form} holds.
\end{proof}

Next, we provide the following approximation to each $h^{1,\infty}(\rlz)$ atom via the bilinear form defined as in \eqref{bilinear form}.
\begin{thm} \label{thm:recfac} Let $\epsilon$ be an arbitrary positive number.  Let $a(x_1,x_2)$ be
an $\fz$-atom as defined in Definition \ref{def atom}.
Then there exist two functions $f,g \in L^\infty(\rlz)$ with compact supports and a constant $C(\epsilon)$ depending only on $\epsilon$ such that
\[ \| a  - \Pi(f,g) \|_{h^1(\rlz)} < \epsilon, \]
where $\| f \|_{\ltp} \|g \|_{\ltp} \le C(\epsilon)$.
\end{thm}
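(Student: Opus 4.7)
The plan is to adapt the Coifman--Rochberg--Weiss weak factorization argument to the Bessel bi-parameter setting, with the pointwise lower bound on the Riesz kernel of Proposition \ref{p-estimate of riesz kernel} playing the role of the analyticity and Fourier tools used in \cite{fs}.

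Given an $\fz$-atom $a$ supported on $R=I\times J$, I would use Proposition \ref{p-estimate of riesz kernel}(i) to select a far rectangle $\wz R=\wz I\times \wz J$ with centers $(\wz x_0,\wz y_0)$ satisfying $\wz x_0>K_1 x$ and $\wz y_0>K_1 y$ for all $(x,y)\in R$, chosen with side lengths so short that the product kernel $\riz(x_1,y_1)\riz(x_2,y_2)$ varies by at most a multiplicative factor $\eps$ over $R\times\wz R$. Setting $g:=\az\chi_{\wz R}$ with $\az$ calibrated so that $\rizo\rizt g\approx 1$ uniformly on $R$, and $f:=a$, the error decomposes as
\[
a-\Pi(f,g)=a\,(1-\rizo\rizt g)+g\,\wrizo\wrizt f.
\]
The first piece is supported on $R$ with pointwise size $O(\eps/\mu_\lz(R))$ by construction. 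The second piece is supported on $\wz R$, and because $\int f\,d\mu_\lz=0$ the approximately constant leading contribution of $\wrizo\wrizt f$ on $\wz R$ vanishes, giving a pointwise size $O(\eps/\mu_\lz(\wz R))$.

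The main obstacle is promoting these pointwise $\eps$-bounds to the $h^1(\rlz)$ bound: each error piece has integral $O(\eps)$ over its supporting rectangle rather than exactly $0$, and although the two integrals must sum to $0$, the naive mean correction produces a residual ``dipole'' $c\big(\chi_R/\mu_\lz(R)-\chi_{\wz R}/\mu_\lz(\wz R)\big)$ with $|c|=O(\eps)$ whose $h^1(\rlz)$ norm is not automatically small when $R$ and $\wz R$ are widely separated. To kill this obstruction I would refine $g$ to a two-term combination $\az_1\chi_{\wz R_1}+\az_2\chi_{\wz R_2}$ over two disjoint far rectangles, and tune $(\az_1,\az_2)$ to impose simultaneously $\rizo\rizt g\approx 1$ on $R$ and the orthogonality $\int g\,\wrizo\wrizt f\,d\mu_\lz=0$. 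This extra condition forces each error piece to be mean zero on its own rectangle, so that the error is a genuine $C\eps$ multiple of a sum of $(1,\fz)$-rectangular atoms. Combined with the equivalence of atomic norms in Theorem \ref{t-atm Hard q indepen} and the duality of Theorem \ref{thm dual}, this yields $\|a-\Pi(f,g)\|_{h^1(\rlz)}<\eps$, while the explicit formulas for $f$ and $g$ give $\|f\|_{\ltp}\|g\|_{\ltp}\le C(\eps)$.
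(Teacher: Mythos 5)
Your overall scheme (select a distant rectangle $\wz R$, take $g$ to be a scaled indicator of $\wz R$ calibrated by the kernel lower bound, and estimate the two error pieces pointwise) matches the skeleton of the paper's argument, and the observation that the error decomposes into a piece on $R$ controlled by kernel smoothness and a piece on $\wz R$ controlled by the cancellation of $a$ is exactly right. However, there are two points worth flagging, one cosmetic and one a genuine gap.

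The cosmetic point is the ``dipole'' worry and the two-rectangle tuning you introduce to force each error piece to be individually mean zero. The paper takes a simpler route: it proves a quantitative lemma (Lemma~\ref{lem:H1Est}) showing that a function supported on $R\cup\wz R$, bounded pointwise by $\wz C_1\chi_R+\wz C_2\chi_{\wz R}$, and with only a \emph{global} mean-zero property has $h^1$ norm at most
$C\bigl(\log_2\frac{|x_{0,1}-y_{0,1}|}{r_1}+\log_2\frac{|x_{0,2}-y_{0,2}|}{r_2}\bigr)(\wz C_1\mu_\lz(R)+\wz C_2\mu_\lz(\wz R))$. The logarithmic penalty you feared is real, but it is beaten by the polynomial decay $r_i/|x_{0,i}-y_{0,i}|\sim 1/\wz M$ of $\wz C_1\mu_\lz(R)+\wz C_2\mu_\lz(\wz R)$, giving $\log_2\wz M/\wz M<\eps$. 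Your two-rectangle construction with the linear constraint $\int a\,\rizo\rizt g\,d\mu_\lz=0$ would also work (note both error pieces have the same integral up to sign, so one scalar constraint suffices), but it requires showing the resulting $2\times2$ system is uniformly solvable, which is extra work the paper avoids.

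The genuine gap concerns the claim that ``Proposition~\ref{p-estimate of riesz kernel}(i) plays the role of analyticity.'' Using only the far-field lower bound forces a \emph{multiplicative} separation $\wz x_{0,i}\gtrsim K_1 x_{0,i}$, and in the Bessel geometry this fails to give a uniform bound on $\|f\|_{\ltp}\|g\|_{\ltp}$ when the atom is far from the origin relative to its side length. Concretely, if $x_{0,i}/r_i=N$ is large and you place $\wz R$ at $\sim K_1 x_{0,i}$, the kernel value at the atom's center is only $\sim r_i/x_{0,i}$, so the calibration constant must be $\sim x_{0,1}x_{0,2}/(r_1r_2)$, and one computes $\|f\|_{\ltp}\|g\|_{\ltp}\gtrsim x_{0,1}x_{0,2}/(r_1r_2)\to\infty$ as $N\to\infty$. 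This is precisely why the paper's proof splits into four cases ($x_{0,i}\lessgtr 2\wz M r_i$) and, in the ``far from origin'' cases, places $\wz R$ at an \emph{additive} distance $\sim\wz M r_i$ using the near-diagonal lower bound of Proposition~\ref{p-estimate of riesz kernel}(iii), keeping $\mu_\lz(\wz R)\sim\mu_\lz(R)$ and $\|f\|_{\ltp}\|g\|_{\ltp}\lesssim\wz M^{2+2\lz}$ uniformly in the atom. Your proposal, relying on (i) alone, would not yield the required uniform bound $\|f\|_{\ltp}\|g\|_{\ltp}\le C(\eps)$ over all atoms; you need the near-diagonal estimate (iii) to handle atoms with large $x_{0,i}/r_i$.
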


To prove Theorem \ref{thm:recfac}, we first provide a technical lemma as follows.

\begin{lem} \label{lem:H1Est}
Let $R:=I(x_{0,1},r_1)\times I(x_{0,2},r_2)$ and $\wz R:=I(y_{0,1},r_1)\times I(y_{0,2},r_2)$ be two rectangles in $\R_+\times\R_+$ with $r_1\leq \min\{x_{0,1}, y_{0,1}\}$ and $r_2\leq \min\{x_{0,2}, y_{0,2}\}$. Moreover, assume that
$ |x_{0,1}-y_{0,1}|\geq 4r_1 $ and $ |x_{0,2}-y_{0,2}|\geq 4r_2 $.

Let $f: \mathbb{R}^2 \rightarrow \mathbb{C}$ with
 $\textnormal{supp}\, f \subseteq R\cup \widetilde{R}.$
  Further, assume that
  $$ |f(x_1,x_2)| \ls  \wz C_1\chi_R(x_1,x_2)+\wz C_2\chi_{\widetilde{R}}(x_1,x_2) $$
  and that $f$ has a mean value zero property:
\begin{align}\label{mean zero var fir}
\int_{\R_+\times\R_+} f(x_1,x_2) \,\dmz(x_1)\dmz(x_2) = 0.
\end{align}
Then there exists a positive constant $C$ independent of  $x_{0,1}$, $x_{0,2}$,  $y_{0,1}$, $y_{0,2}$, $r_1$, $r_2$, $\wz C_1$ and $\wz C_2$ such that
$$\| f \|_{h^1(\rlz)} \leq C\bigg( \log_2 {|x_{0,1}-y_{0,1}| \over r_1}+\log_2 {|x_{0,2}-y_{0,2}| \over r_2} \bigg)\Big(\wz C_1\mu_\lz(R)+\wz C_2\mu_\lz(\widetilde{R}) \Big). $$
\end{lem}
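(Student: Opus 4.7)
The plan is to decompose $f$ into pieces each controlled by a $(1,\infty)$-rectangular atom. Set $f_1 := f\chi_R$, $f_2 := f\chi_{\widetilde R}$, $c_1 := m_R(f)$, $c_2 := m_{\widetilde R}(f)$, and write
$$
f = (f_1 - c_1\chi_R) + (f_2 - c_2\chi_{\widetilde R}) + (c_1\chi_R + c_2\chi_{\widetilde R}).
$$
Each of the first two summands is supported on a single rectangle, has mean zero by construction, and is bounded by $2\widetilde{C}_i$, so it is a scalar multiple of a $(1,\infty)$-atom with $h^1(\rlz)$-norm at most $2\widetilde{C}_i\mu_\lz(R_i)$. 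Hypothesis \eqref{mean zero var fir} gives $c_1\mu_\lz(R)+c_2\mu_\lz(\widetilde R)=0$, so with $\az := c_1\mu_\lz(R) = -c_2\mu_\lz(\widetilde R)$ one has
$$
c_1\chi_R + c_2\chi_{\widetilde R} = \az\Big(\tfrac{\chi_R}{\mu_\lz(R)} - \tfrac{\chi_{\widetilde R}}{\mu_\lz(\widetilde R)}\Big), \qquad |\az| \le \min\{\widetilde{C}_1\mu_\lz(R),\, \widetilde{C}_2\mu_\lz(\widetilde R)\}.
$$

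The core task is thus to bound the normalised characteristic difference by the sum of the two logarithms. I would introduce the hybrid rectangle $R':=I(y_{0,1},r_1)\times I(x_{0,2},r_2)$ and split
$$
\tfrac{\chi_R}{\mu_\lz(R)} - \tfrac{\chi_{\widetilde R}}{\mu_\lz(\widetilde R)} = (\vz_1\otimes\eta_2) + (\eta_1'\otimes\vz_2),
$$
where, writing $\phi_{x,r}:=\chi_{I(x,r)}/m_\lz(I(x,r))$, one sets $\vz_i := \phi_{x_{0,i},r_i}-\phi_{y_{0,i},r_i}$, $\eta_2 := \phi_{x_{0,2},r_2}$, and $\eta_1' := \phi_{y_{0,1},r_1}$. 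Each summand factors as a mean-zero function on $(\rr_+,dm_\lz)$ tensored against a normalised characteristic function. Writing $\vz_i = \sum_k\lz_k\az_k$ as a one-parameter atomic decomposition of $\vz_i$ in $H^1(\rr_+,\,dm_\lz)$, each $\az_k\otimes\phi_{\cdot,\cdot}$ is a $(1,\infty)$-rectangular atom (support, cancellation, and $L^\infty$-bound all factor correctly through the product measure $d\mu_\lz = dm_\lz\otimes dm_\lz$), so $\|\vz_i\otimes\eta\|_{h^1(\rlz)}\ls\|\vz_i\|_{H^1(\rr_+,\,dm_\lz)}$.

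The remaining ingredient is the weighted one-parameter estimate
$$
\big\|\phi_{x_0,r}-\phi_{y_0,r}\big\|_{H^1(\rr_+,\,dm_\lz)} \ls \log_2\tfrac{|x_0-y_0|}{r},
$$
under $|x_0-y_0|\ge 4r$ and $r\le\min\{x_0,y_0\}$. I would construct a chain $J_0\subset J_1\subset\cdots\subset J_N$ of intervals with $J_0\supset I(x_0,r)$, $|J_{k+1}|=2|J_k|$, and $I(y_0,r)\subset J_N$, giving $N\asymp\log_2(|x_0-y_0|/r)$. Telescoping,
$$
\phi_{x_0,r} - \tfrac{\chi_{J_N}}{m_\lz(J_N)} = \sum_{k=0}^{N-1}\Big(\tfrac{\chi_{J_k}}{m_\lz(J_k)} - \tfrac{\chi_{J_{k+1}}}{m_\lz(J_{k+1})}\Big),
$$
and each summand is mean-zero on $J_{k+1}$ with $L^\infty$-norm at most $C/m_\lz(J_{k+1})$ via doubling of $m_\lz$, hence a uniformly bounded multiple of an $H^1$-atom; running the parallel chain from $I(y_0,r)$ to $J_N$ and subtracting yields the logarithmic bound. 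Combining the three layers, using $|\az|\le\widetilde C_1\mu_\lz(R)+\widetilde C_2\mu_\lz(\widetilde R)$ and $\log_2(|x_{0,i}-y_{0,i}|/r_i)\ge 2$ to absorb the atomic remainder, produces the claimed inequality. The main obstacle is the weighted one-parameter logarithmic estimate, whose rigour rests on the doubling of $m_\lz$ and standard one-parameter $H^1$ atomic theory; once that is in hand, the tensor-lifting and mean-removal steps are organisational, with the caveat that $m_\lz$ does not factor as a product of lengths and so all atomic bounds must be phrased in terms of $\mu_\lz$ and $m_\lz$ directly.
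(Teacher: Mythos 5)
Your argument is correct, but it follows a genuinely different route from the paper's. The paper runs a two-variable Calder\'on--Zygmund expansion directly: from each of $f_1 = f\chi_R$ and $f_2 = f\chi_{\widetilde R}$ it iteratively peels off the average on a growing sequence of dilated rectangles $2^i R$ (resp.~$2^i\widetilde R$), producing $i_0\asymp\log_2\frac{|x_{0,1}-y_{0,1}|}{r_1}+\log_2\frac{|x_{0,2}-y_{0,2}|}{r_2}$ atoms of size $\lesssim\widetilde C_j\mu_\lz(R_j)$, and then invokes the global cancellation \eqref{mean zero var fir} only at the final stage, passing to the enveloping rectangle $\overline R$ so that the two leftover tails $g_1^{i_0},g_2^{i_0}$ combine into two more atoms. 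You instead subtract the rectangle means $c_1,c_2$ immediately (which already furnishes two atoms), invoke \eqref{mean zero var fir} at the outset to rewrite the leftover as $\az\big(\chi_R/\mu_\lz(R)-\chi_{\widetilde R}/\mu_\lz(\widetilde R)\big)$, tensor-split the indicator difference through a hybrid rectangle, and then fall back on a one-parameter telescoping chain in $H^1(\R_+,\dmz)$. The paper's approach stays entirely inside the two-variable atomic machinery and never needs to discuss $H^1(\R_+,\dmz)$ or verify that tensoring a one-variable atom with a normalised indicator produces an $h^1(\rlz)$ atom; your approach is more modular and cleanly separates the two sources of loss (mean removal vs.\ displacement), and your observation $|\az|\le\min\{\widetilde C_1\mu_\lz(R),\widetilde C_2\mu_\lz(\widetilde R)\}$ is a slight refinement the paper does not record. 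Two details you should flesh out if you write this up: the tensor-lift step works precisely because the atoms of $h^1(\rlz)$ in Definition~\ref{def atom} carry only the single cancellation $\iint a\,d\mu_\lz=0$ (not Chang--Fefferman cancellation in each variable), so $a_k\otimes\phi_{\cdot,\cdot}$ really is a rectangle atom; and the one-parameter chain needs a little care to keep the intervals nested while reaching a common endpoint containing both $I(x_0,r)$ and $I(y_0,r)$ (e.g.\ grow $[c,d]\mapsto[\max(a,c-2^k r),\min(b,d+2^k r)]$ inside a fixed envelope $[a,b]\supset I(x_0,r)\cup I(y_0,r)$), with the doubling of $m_\lz$ supplying the uniform $L^\infty$ bound for each telescoped difference.
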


\begin{proof}
Suppose $f$ satisfies the conditions as stated above. We will show that
$f$ has an atomic decomposition as the form in Definition \ref{def little h1}.  To see this, we first define two functions $f_1(x_1,x_2) $ and $f_2(x_1,x_2)$ by
\begin{align*}
&f_1(x_1,x_2)=f(x_1,x_2), (x_1,x_2)\in R; \quad f_1(x_1,x_2)=0, (x_1,x_2)\in \R^2\setminus R, \quad{\rm and}\\
& f_2(x_1,x_2)=f(x_1,x_2), (x_1,x_2)\in \widetilde{R}; \quad f_2(x_1,x_2)=0, (x_1,x_2)\in \R^2\setminus \widetilde{R}.
\end{align*}
Then we have that $f=f_1+f_2$ and that
$$ |f_1(x_1,x_2)| \ls  \wz C_1 \chi_R(x_1,x_2)\quad {\rm and}\quad |f_2(x_1,x_2)| \ls  \wz C_2  \chi_{\widetilde{R}}(x_1,x_2). $$

Define
\begin{align*}
g_1^1(x_1,x_2)&:=\frac{\chi_{2R}(x_1,x_2)}{\mu_\lz(2R)}\iint_{R}f_1(y_1,y_2)\dmz(y_1)\dmz(y_2),\\
f_1^1(x_1,x_2)&:= f_1(x_1,x_2)- g_1^1(x_1,x_2),\\
\alpha_1^1&:=\|f_1^1\|_{L^\infty(\rlz)} \mu_\lz(2R).
\end{align*}
Then we claim that $a_1^1:= (\alpha_1^1)^{-1} f_1^1$  is a rectangle atom as in Definition \ref{def atom}.
First, it is direct that $a_1^1$ is supported in $2R$. Moreover, we have that
\begin{align*}
  &\int_{\R_+\times\R_+} a_1^1(x_1,x_2) \,\dmz(x_1)\dmz(x_2)\\
  &\quad =(\alpha_1^1)^{-1}\int_{\R_+\times\R_+}  \left(f_1(x_1,x_2)- g_1^1(x_1,x_2) \right)\dmz(x_1)\dmz(x_2)\\
  &\quad= (\alpha_1^1)^{-1}\bigg(\int_{\R_+\times\R_+}  f_1(x_1,x_2)\,\dmz(x_1)\dmz(x_2)- \int_{\R_+\times\R_+}  f_1(x_1,x_2)\dmz(x_1)\dmz(x_2)\bigg)\\
  &\quad=0
\end{align*}
and that
\begin{align*}
\|a_1^1\|_{L^\infty(\rlz)} \leq |(\alpha_1^1)^{-1}| \|f_1^1\|_{L^\infty(\rlz)} = \frac1{\mu_\lz(2R)}.
\end{align*}
Thus, $a_1^1$  is an $\fz$-atom as in Definition \ref{def atom}. Moreover, we have
\begin{align*}
\alpha_1^1=\|f_1^1\|_{L^\infty(\rlz)}  \mu_\lz(2R) \le  \|f_1\|_{L^\infty(\rlz)} \mu_\lz(2R) + \|g_1^1\|_{L^\infty(\rlz)} \mu_\lz(2R) \ls \wz C_1\mu_\lz(R),
\end{align*}
where the implicit constant depends only on $\lz$. We now have
\begin{align*}
f_1(x_1,x_2)= f_1^1(x_1,x_2)+ g_1^1(x_1,x_2)= \alpha_1^1 a_1^1+ g_1^1(x_1,x_2).
\end{align*}
For $g_1^1(x_1,x_2)$, we further write it as
\begin{align*}
g_1^1(x_1,x_2)= g_1^1(x_1,x_2)- g_1^2(x_1,x_2)+ g_1^2(x_1,x_2)=:f_1^2(x_1,x_2)+g_1^2(x_1,x_2)
\end{align*}
with
$$g_1^2(x_1,x_2):=\frac{\chi_{4R}(x_1,x_2)}{\mu_\lz(4R)}\iint_{R}f_1(y_1,y_2)\dmz(y_1)\dmz(y_2). $$
Again, we define
\begin{align*}
\alpha_1^2&:=\|f_1^2\|_{L^\infty(\rlz)}  \mu_\lz(4R) \quad{\rm and}\quad a_1^2:= (\alpha_1^2)^{-1} f_1^2,
\end{align*}
and following similar estimates as for $a_1^1$, we see that
$a_1^2$  is an $\fz$-atom as in Definition \ref{def atom} with
$$ \|a_1^2\|_{L^\infty(\rlz)} \leq \frac{1}{\mu_\lz(4R)}\quad \textnormal{ and } \quad  \alpha_1^2 \lesssim \wz C_1\mu_\lz(R), $$
where the implicit constant depends only on $\lz$.

Then we have
\begin{align*}
f_1(x_1,x_2)=\sum_{i=1}^2 \alpha_1^i a_1^i+ g_1^2(x_1,x_2).
\end{align*}
Continuing in this fashion we see that for $i \in \{1, 2, . . . , i_0\}$,
\begin{align*}
f_1(x_1,x_2)=\sum_{i=1}^{i_0} \alpha_1^i a_1^i+ g_1^{i_0}(x_1,x_2),
\end{align*}
where for $i \in \{2, . . . , i_0\}$,
\begin{align*}
g_1^{i}(x_1,x_2)&:=\frac{\chi_{2^{i}R}(x_1,x_2)}{\mu_\lz(2^i R)}\iint_{R}f_1(y_1,y_2)\dmz(y_1)\dmz(y_2),\\
f_1^{i}(x_1,x_2)&:= g_1^{i-1}(x_1,x_2)-g_1^{i}(x_1,x_2),\\
\alpha_1^i&:=\|f_1^i\|_{L^\infty(\rlz)} \mu_\lz(2^iR) \quad{\rm and}\\
a_1^i&:= (\alpha_1^i)^{-1} f_1^i.
\end{align*}
Here we choose $i_0$ to be the smallest positive integer such that
$$ i_0\geq  \log_2 {|x_{0,1}-y_{0,1}| \over r_1}+\log_2 {|x_{0,2}-y_{0,2}| \over r_2} .$$

Moreover, for $i \in \{1, 2, . . . , i_0\}$, we have
\begin{align*}
\alpha_1^i&\leq \|f_1^i\|_{L^\infty(\rlz)} \mu_\lz(2^iR) \leq  \big(\|g_1^{i-1}\|_{L^\infty(\rlz)} +\|g_1^{i}\|_{L^\infty(\rlz)}\big)\ \mu_\lz(2^iR)\\
&\leq \mu_\lz(2^iR)  \bigg(\frac{1}{\mu_\lz(2^{i-1} R)}\iint_{R}|f_1(y_1,y_2)|\dmz(y_1)\dmz(y_2) \\
&\hskip3cm+{1\over \mu_\lz(2^iR)}\iint_{R}|f_1(y_1,y_2)|\dmz(y_1)\dmz(y_2)\bigg)\\
&\ls \mu_\lz(2^iR) \frac{1}{\mu_\lz(2^{i-1} R)} \|f_1\|_{L^\infty(\rlz)} \mu_\lz(R) \\
&\ls \wz C_1 \mu_\lz(R),
\end{align*}
where the implicit constant depends only on $\lz$.

Following the same steps, we also obtain that  for $i \in \{1, 2, . . . , i_0\}$,
\begin{align*}
f_2(x_1,x_2)=\sum_{i=1}^{i_0} \alpha_2^i a_2^i+ g_2^{i_0}(x_1,x_2),
\end{align*}
where for $i \in \{2, . . . , i_0\}$,
\begin{align*}
g_2^{i}(x_1,x_2)&:={\chi_{2^{i}\wz R}(x_1,x_2)\over \mu_\lz(2^i \wz R)}\iint_{\widetilde R}f_2(y_1,y_2)\dmz(y_1)\dmz(y_2),\\
f_2^{i}(x_1,x_2)&:= g_2^{i-1}(x_1,x_2)-g_2^{i}(x_1,x_2),\\
\alpha_2^i&:=\|f_2^i\|_{L^\infty(\rlz)} \mu_\lz(2^i\wz R) \quad{\rm and}\\
a_2^i&:= (\alpha_2^i)^{-1} f_2^i.
\end{align*}
Similarly, for $i \in \{1, 2, . . . , i_0\}$, we have
\begin{align*}
\alpha_2^i\ls  \wz C_2 \mu_\lz(\wz R).
\end{align*}

Combining the decompositions above, we obtain that
\begin{align*}
f(x_1,x_2)=\sum_{j=1}^2\sum_{i=1}^{i_0} \alpha_j^i a_j^i+ g_j^{i_0}(x_1,x_2).
\end{align*}
We now consider the tail $g_1^{i_0}(x_1,x_2) + g_2^{i_0}(x_1,x_2)$. To handle that, consider
the rectangle $\overline R$  defined as
$$\overline R:=  I\Big( {x_{0,1}+y_{0,1}\over2}, (2^{i_0}+1)r_1\Big) \times  I\Big( {x_{0,2}+y_{0,2}\over2}, (2^{i_0}+1)r_2\Big). $$
 Then, it is clear that $R\cup \widetilde{R} \subset \overline R$,
and that $2^{i_0}R, 2^{i_0}\widetilde{R} \subset \overline R$.
Thus, we get that
\begin{align*}
{\chi_{\overline{R}}(x_1,x_2)\over \mu_\lz(\overline{R})}\iint_{\overline{R}}f_1(y_1,y_2)\dmz(y_1)\dmz(y_2)+
\ {\chi_{\overline{R}}(x_1,x_2)\over \mu_\lz(\overline{R})}\iint_{\overline{R}}f_2(y_1,y_2)\dmz(y_1)\dmz(y_2)=0.
\end{align*}
Hence, we write
\begin{align*}
g_1^{i_0}(x_1,x_2) + g_2^{i_0}(x_1,x_2)&=\bigg(g_1^{i_0}(x_1,x_2) -
{\chi_{\overline{R}}(x_1,x_2)\over \mu_\lz(\overline{R})}\iint_{\overline{R}}f_1(y_1,y_2)\dmz(y_1)\dmz(y_2)\bigg) \\
&\quad+ \bigg( g_2^{i_0}(x_1,x_2) -
 {\chi_{\overline{R}}(x_1,x_2)\over \mu_\lz(\overline{R})}\iint_{\overline{R}}f_2(y_1,y_2)\dmz(y_1)\dmz(y_2)\bigg)\\
 &=: f_1^{i_0+1}+f_2^{i_0+1}.
\end{align*}
We now define
\begin{align*}
\alpha_1^{i_0+1}&:=\|f_1^{i_0+1}\|_{L^\infty(\rlz)} \mu_\lz(2^{i_0+1}R), \quad \alpha_2^{i_0+1}:=\|f_2^{i_0+1}\|_{L^\infty(\rlz)} \mu_\lz(2^{i_0+1}\wz R) \\
a_1^{i_0+1}&:= (\alpha_1^{i_0+1})^{-1} f_1^{i_0+1}   \quad{\rm and}\quad a_2^{i_0+1}:= (\alpha_2^{i_0+1})^{-1} f_2^{i_0+1}.
\end{align*}
Again we can verify that  $a_1^{i_0+1}$ is an $\fz$-atom as in Definition \ref{def atom} with
$$\|a_1^{i_0+1}\|_{L^\infty(\rlz)} = {1\over  \mu_\lz(2^{i_0+1}R) }.$$
Moreover, we also have
$$\alpha_1^{i_0+1} \ls \wz C_1  \mu_\lz(R).$$
Similarly, $a_2^{i_0+1}$ is an $\fz$-atom as in Definition \ref{def atom} with
$$\|a_2^{i_0+1}\|_{L^\infty(\rlz)} = {1\over  \mu_\lz(2^{i_0+1}\wz R) },$$
and we also have
$$\alpha_2^{i_0+1} \ls \wz C_1  \mu_\lz(\wz R).$$

Thus, we obtain that
\begin{align*}
f(x_1,x_2)=\sum_{j=1}^2\sum_{i=1}^{i_0+1} \alpha_j^i a_j^i,
\end{align*}
which implies that $f\in h^1(\rlz)$ and
\begin{align*}
\|f\|_{h^1(\rlz)} &\leq \sum_{j=1}^2\sum_{i=1}^{i_0+1}  \alpha_j^i
\\
&\leq C\bigg( \log_2 {|x_{0,1}-y_{0,1}| \over r_1}+\log_2 {|x_{0,2}-y_{0,2}| \over r_2} \bigg)\Big(\wz C_1\mu_\lz(R)+\wz C_2\mu_\lz(\widetilde{R}) \Big).
\end{align*}
Therefore, we finish the proof of Lemma \ref{lem:H1Est}.
\end{proof}

\bigskip
\begin{proof}[\bf Proof of Theorem \ref{thm:recfac}]  Suppose $a$ is an atom of $h^1(\rlz)$ supported in a rectangle
$$R:=I(x_{0,1}, r_1)\times I(x_{0,2}, r_2),$$ as in Definition \ref{def atom}. Observe that if $r_1>x_{0,1}$, then
$I(x_{0,1}, r_1)=(x_{0,1}-r_1,x_{0,1}+r_1)\cap \mathbb{R}_+=I(\frac{x_{0,1}+r_1}2, \frac{x_{0,1}+r_1}2)$. Therefore, without loss of generality, we may assume
that $r_1\le x_{0,1}$, and similarly assume that $r_2\le x_{0,2}$. Let $K_2$ and ${K}_3$ be the constants appeared in  (ii) and (iii) of Proposition \ref{p-estimate of riesz kernel} respectively, and
$K_0>\max\{{1\over K_2},\,\frac1{{K}_3}\}+1$ large enough. For any $\ez>0$, let $\wz M$ be a positive constant large enough such that $\wz M\ge100K_0$
and $\frac{\log_2 \wz M}{\wz M}<\ez$.


We now consider the following four cases.

\smallskip

Case (a): $x_{0,1}\le 2\wz Mr_1$,  $x_{0,2}\le 2\wz Mr_2$.

\smallskip
In this case, let $y_{0,1}:=x_{0,1}+2\wz MK_0r_1$ and $y_{0,2}:=x_{0,2}+2\wz MK_0r_2$ and
$$\wz R:=I(y_{0,1}, r_1)\times I(y_{0,2}, r_2).$$
Then for $i=1,2$,
$$(1+K_0)x_{0,i}\le y_{0,i}\le (1+2\wz MK_0)x_{0,i}.$$
Define
\begin{align}\label{g}
 g(x_1,x_2):=\chi_{\wz R}(x_1,x_2)
\end{align}
and
\begin{align}\label{h}
h(x_1,x_2):=-\frac{a(x_1,x_2)}{\wrizo \wrizt (g)(x_{0,1},x_{0,2})}.
\end{align}

We first point out that by the fact that $y_i/x_{0,i}>K_2^{-1}$ for any $y_i\in I(y_{0,i},r_i)$, $i=1,2$, and Proposition \ref{p-estimate of riesz kernel} (ii), we see that
\begin{align}\label{lower bound of riesz}
&\lf| \wrizo \wrizt (g)(x_{0,1},x_{0,2}) \r|\\
&\quad=\Big|\int_{y_{0,1}-r_1}^{y_{0,1}+r_1}  \wrizo(y_1, x_{0,1}) \dmz(y_1)\int_{y_{0,2}-r_2}^{y_{0,2}+r_2}  \wrizo(y_2, x_{0,2}) \dmz(y_2)\Big|\noz\\
&\quad\gs\int_{y_{0,1}-r_1}^{y_{0,1}+r_1}\frac1{y_1}dy_1\int_{y_{0,2}-r_2}^{y_{0,2}+r_2}\frac1{y_2}dy_2\sim\frac {r_1}{y_{0,1}} \frac {r_2}{y_{0,2}} \sim\frac{1}{\wz M^2}.\noz
\end{align}

Then from the definitions of $g$ and $h$ above, we have
\[ \|g \|_{\ltp} = \mu_\lz(\wz R)^{\frac{1}{2}} \]
and
\[ \| h \|_{\ltp} = \frac{1}{\big|\wrizo \wrizt (g)(x_{0,1},x_{0,2})\big|} \| a \|_{\ltp} \le \frac{\mu_\lz(R)^{-\frac{1}{2}}}{\big|\wrizo \wrizt (g)(x_{0,1},x_{0,2})\big|} .\]
Thus, from \eqref{lower bound of riesz},  we have that
\begin{align*}
 \|g \|_{\ltp}  \| h \|_{\ltp} &\ls \wz M^2  \mu_\lz(\wz R)^{\frac{1}{2}} \mu_\lz(R)^{-\frac{1}{2}}
 \ls \wz M^2 \bigg({  y_{0,1}^{2\lz}\, r_1\ y_{0,2}^{2\lz}\, r_2 \over   x_{0,1}^{2\lz}\, r_1\ x_{0,2}^{2\lz}\, r_2  } \bigg)^{1\over2} \ls \wz M^{2+2\lz}.
\end{align*}


Now, write
\begin{align*}
 &a(x_1,x_2) - \Pi(g,h)(x_1,x_2)\\
 & \quad= \left( a(x_1,x_2) + h(x_1,x_2) \wrizo \wrizt (g)(x_1,x_2) \right)  - g(x_1,x_2) \rizo\rizt (h)(x_1,x_2)\\
 & \quad=:w_1(x_1,x_2) +w_2(x_1,x_2).
 \end{align*}

Moreover, we define
$$D_1:=\frac{ m_\lz(I(y_{0,1},r_1))}
{ m_\lz(I(x_{0,1},r_1)) m_\lz(I(x_{0,1},|y_{0,1}-x_{0,1}|)) m_\lz(I(x_{0,2},|y_{0,2}-x_{0,2}|)) }$$
and
$$D_2:=\frac{ 1}
{  m_\lz(I(x_{0,1},|y_{0,1}-x_{0,1}|)) m_\lz(I(x_{0,2},|y_{0,2}-x_{0,2}|)) }.$$

First, consider $w_1.$ Observe that $\text{supp } w_1 \subseteq R$ and
\[ |w_1(x_1,x_2) |  = |a(x_1,x_2)| \frac{\left| \wrizo \wrizt (g)(x_{0,1},x_{0,2}) - \wrizo \wrizt (g)(x_{1},x_{2}) \right|}{\big|\wrizo \wrizt (g)(x_{0,1},x_{0,2})\big|}.\]
Then as $(x_1,x_2) \in R$,  we can estimate
\begin{align*}
 &\left| \wrizo \wrizt (g)(x_{0,1},x_{0,2}) - \wrizo \wrizt (g)(x_{1},x_{2}) \right|\\
  &\quad= \left|
 \int_{\widetilde{R}} \Big[\wrizo(x_{0,1},y_{1})\wrizt(x_{0,2},y_{2}) - \wrizo(x_{1},y_{1})\wrizt(x_{2},y_{2}) \Big]\dmz(y_1)\dmz(y_2) \right| \\
&\quad \ls   \int_{\widetilde{R}} \bigg[\frac{|x_1-x_{0,1}|}
{|y_1-x_{0,1}| m_\lz(I(x_{0,1},|y_1-x_{0,1}|)) m_\lz(I(x_{0,2},|y_2-x_{0,2}|)) }  \\
&\quad\quad\quad\quad+ \frac{|x_2-x_{0,2}|}
{|y_2-x_{0,2}| m_\lz(I(x_{0,2},|y_2-x_{0,2}|)) m_\lz(I(x_{1},|y_1-x_{1}|)) } \bigg]\dmz(y_1)\dmz(y_2)   \\
&\quad \ls   \mu_\lz(\widetilde{R}) \bigg[\frac{r_1}
{|y_{0,1}-x_{0,1}| m_\lz(I(x_{0,1},|y_{0,1}-x_{0,1}|)) m_\lz(I(x_{0,2},|y_{0,2}-x_{0,2}|)) }  \\
&\quad\quad\quad\quad\quad\quad+ \frac{r_2}
{|y_{0,2}-x_{0,2}| m_\lz(I(x_{0,2},|y_{0,2}-x_{0,2}|)) m_\lz(I(x_{1},|y_{0,1}-x_{1}|)) } \bigg].
\end{align*}
Combining the above estimates, \eqref{lower bound of riesz}, and the definition of $w_1$ immediately gives:
\begin{align*}
|w_1(x_1,x_2)|&\ls \wz M^2  \|a\|_{L^\infty(R)} \mu_\lz(\widetilde{R})\bigg[\frac{r_1\ m_\lz(I(y_{0,1},r_1))}
{|y_{0,1}-x_{0,1}| m_\lz(I(x_{0,1},|y_{0,1}-x_{0,1}|)) m_\lz(I(x_{0,2},|y_{0,2}-x_{0,2}|)) }  \\
&\quad\quad+ \frac{r_2\ m_\lz(I(y_{0,2},r_2))}
{|y_{0,2}-x_{0,2}| m_\lz(I(x_{0,2},|y_{0,2}-x_{0,2}|)) m_\lz(I(x_{1},|y_{0,1}-x_{1}|)) } \bigg]\chi_{R}(x_1,x_2)\\
&\ls \bigg[\frac{ m_\lz(I(y_{0,1},r_1))}
{ m_\lz(I(x_{0,1},r_1)) m_\lz(I(x_{0,1},|y_{0,1}-x_{0,1}|)) m_\lz(I(x_{0,2},|y_{0,2}-x_{0,2}|)) }  \\
&\quad\quad+ \frac{ m_\lz(I(y_{0,2},r_2))}
{ m_\lz(I(x_{0,2},r_2)) m_\lz(I(x_{0,2},|y_{0,2}-x_{0,2}|)) m_\lz(I(x_{1},|y_{0,1}-x_{1}|)) } \bigg]\chi_{R}(x_1,x_2)\\
&\ls D_1\chi_{R}(x_1,x_2).
\end{align*}

Now, consider $w_2(x_1,x_2)$.
Note that
\[ w_2(x_1,x_2) = \frac{1}{ \wrizo \wrizt (g)(x_{0,1},x_{0,2})} \chi_{\widetilde{R}} (x_1,x_2) \rizo\rizt (a)(x_1,x_2) . \]
Clearly, $\text{supp } w_2 \subseteq \widetilde{R}.$ Furthermore, using the mean value zero property of $a(x_1,x_2)$, we have:
\begin{align*}
\rizo\rizt (a)(x_1,x_2)
&=\int_{R} \bigg( \rizo(x_1,y_1)\rizt(x_2,y_2) - \rizo(x_1,x_{0,1})\rizt(x_2,x_{0,2}) \bigg) \\
&\hskip1cm\times a(y_1,y_2) \dmz(y_1)\dmz(y_2).
\end{align*}
Then following similar estimates as in $w_1$ above, we have
\begin{align*}
|w_2(x_1,x_2)|
&\ls \wz M^2  \|a\|_{L^\infty(R)}\mu_\lz(\widetilde{R}) \bigg[\frac{r_1\ m_\lz(I(x_{0,1},r_1))}
{|x_{1}-x_{0,1}| m_\lz(I(x_{0,1},|x_{1}-x_{0,1}|)) m_\lz(I(x_{0,2},|x_{2}-x_{0,2}|)) }  \\
&\quad\quad+ \frac{r_2\ m_\lz(I(x_{0,2},r_2))}
{|x_{2}-x_{0,2}| m_\lz(I(x_{0,2},|x_{2}-x_{0,2}|)) m_\lz(I(x_{0,1},|x_{1}-x_{0,1}|)) } \bigg]\chi_{\wz R}(x_1,x_2)\\
&\ls \bigg[\frac{ 1}
{  m_\lz(I(x_{0,1},|x_{1}-x_{0,1}|)) m_\lz(I(x_{0,2},|x_{2}-x_{0,2}|)) }  \\
&\quad\quad+ \frac{ 1}
{  m_\lz(I(x_{0,2},|x_{2}-x_{0,2}|)) m_\lz(I(x_{0,1},|x_{1}-x_{0,1}|))  } \bigg]\chi_{\wz R}(x_1,x_2)\\
&\ls D_2\chi_{\wz R}(x_1,x_2).
 \end{align*}

Combining the estimates of $w_1$ and $w_2$, we can conclude that $a - \Pi(f,g)$ has support contained in
$$R\cup \widetilde{R}$$ and satisfies
$$ \int_{\R_+\times\R_+}\left( a(x_1,x_2) -  \Pi(f,g) (x_1,x_2) \right) \dmz(x_1)\dmz(x_2)=0.$$
Then, from Lemma \ref{lem:H1Est}, we have
\begin{align*}
\| a - \Pi(f,g) \|_\hop& \ls \bigg( \log_2 {|x_{0,1}-y_{0,1}| \over r_1}+\log_2 {|x_{0,2}-y_{0,2}| \over r_2} \bigg)\Big(D_1\mu_\lz(R)+D_2\mu_\lz(\widetilde{R}) \Big)\\
& \ls \bigg( \log_2 {|x_{0,1}-y_{0,1}| \over r_1}+\log_2 {|x_{0,2}-y_{0,2}| \over r_2} \bigg)\Big( {r_1\over | x_{0,1}-y_{0,1} | }+{r_2\over | x_{0,2}-y_{0,2} | } \Big)\\
&\ls {\log_2\wz M\over \wz M}\\
&\ls \epsilon.
\end{align*}

\medskip

Case (b): $x_{0,1}> 2\wz Mr_1$,  $x_{0,2}\le 2\wz Mr_2$.

\smallskip
In this case, let $y_{0,1}:=x_{0,1}- {\wz Mr_1\over K_0}$ and $y_{0,2}:=x_{0,2}+2\wz MK_0r_2$ and
$$\wz R:=I(y_{0,1}, r_1)\times I(y_{0,2}, r_2).$$ We also let $g$ and $h$ be the same as in \eqref{g} and \eqref{h}, respectively.

Then ${2K_0-1\over 2K_0}x_{0,1}<y_{0,1}<x_{0,1}$.
For every $y_1\in I(y_{0,1}, r_1)$, from the facts that $K_0>\max\{{1\over K_2},\,\frac1{{K}_3}\}+1$ and $M\ge100K_0$, we have
$$0<{x_{0,1}\over y_1} -1<{K}_3.$$

To continue, for the first variable, we use Proposition \ref{p-estimate of riesz kernel} (iii)
and the fact that $y_1\sim y_{0,1}\sim x_{0,1}$ for any $y_1\in I(y_{0,1}, r_1)$; and for the second variable, we use Proposition \ref{p-estimate of riesz kernel} (ii)
the fact that $y_2/x_{0,2}>K_2^{-1}$ for any $y_2\in I(y_{0,2},r_2)$. Then we see that
\begin{align}\label{lower bound of riesz 2}
&\lf| \wrizo \wrizt (g)(x_{0,1},x_{0,2}) \r|\\
&\quad=\Big|\int_{y_{0,1}-r_1}^{y_{0,1}+r_1}  \wrizo(y_1, x_{0,1}) \dmz(y_1)\int_{y_{0,2}-r_2}^{y_{0,2}+r_2}  \wrizo(y_2, x_{0,2}) \dmz(y_2)\Big|\noz\\
&\quad\gs\int_{y_{0,1}-r_1}^{y_{0,1}+r_1}\frac1{x_{0,1}^\lz y_{0,1}^\lz} {1\over x_{0,1}-y_1}\dmz(y_1)\int_{y_{0,2}-r_2}^{y_{0,2}+r_2}\frac1{y_2}dy_2\noz\\
&\quad  \sim \int_{y_{0,1}-r_1}^{y_{0,1}+r_1} {1\over x_{0,1}-y_{0,1}}dy_1\ \frac {r_2}{y_{0,2}} \sim\frac{1}{\wz M^2}.\noz
\end{align}

Thus, from \eqref{lower bound of riesz 2},  we have that
\begin{align*}
 \|g \|_{\ltp}  \| h \|_{\ltp}
 \ls \wz M^2 \bigg({  y_{0,1}^{2\lz}\, r_1\ y_{0,2}^{2\lz}\, r_2 \over   x_{0,1}^{2\lz}\, r_1\ x_{0,2}^{2\lz}\, r_2  } \bigg)^{1\over2} \ls \wz M^{2+\lz}.
\end{align*}

Then to estimate $a(x_1,x_2) - \Pi(g,h)(x_1,x_2)$,  we define $w_1$ and $w_2$ to be the same as in Case (a). And following the same estimates as in Case (a),
we obtain that
$$ w_1(x_1,x_2)\ls  D_1\chi_{R}(x_1,x_2)\quad {\rm and}\quad w_2(x_1,x_2)\ls  D_2\chi_{\wz R}(x_1,x_2).  $$
Then, the fact that  $\| a - \Pi(f,g) \|_\hop \ls \epsilon$ now immediately follows from Lemma \ref{lem:H1Est} and the argument in Case (a).

\medskip

Case (c): $x_{0,1}\leq 2\wz Mr_1$,  $x_{0,2}> 2\wz Mr_2$.

\smallskip
In this case, let $y_{0,1}:=x_{0,1}+2\wz MK_0r_1$  and $y_{0,2}:=x_{0,2}- {\wz Mr_2\over K_0}$ and
$$\wz R:=I(y_{0,1}, r_1)\times I(y_{0,2}, r_2).$$ We also let $g$ and $h$ be the same as in \eqref{g} and \eqref{h}, respectively. Then, by handling the estimates symmetrically to Case (b), we obtain that
\begin{align}\label{lower bound of riesz 3}
&\lf| \wrizo \wrizt (g)(x_{0,1},x_{0,2}) \r| \gs\frac{1}{\wz M^2},
\end{align}
which gives
\begin{align*}
 \|g \|_{\ltp}  \| h \|_{\ltp}
 \ls \wz M^{2+\lz}.
\end{align*}
Again we obtain that   $\| a - \Pi(f,g) \|_\hop \ls \epsilon$.

\medskip

Case (d): $x_{0,1}> 2\wz Mr_1$,  $x_{0,2}> 2\wz Mr_2$.

\smallskip
In this case, let $y_{0,1}:=x_{0,1}- {\wz Mr_1\over K_0}$   and $y_{0,2}:=x_{0,2}- {\wz Mr_2\over K_0}$ and
$$\wz R:=I(y_{0,1}, r_1)\times I(y_{0,2}, r_2).$$ We also let $g$ and $h$ be the same as in \eqref{g} and \eqref{h}, respectively.
Then for $i=1,2$,  ${2K_0-1\over 2K_0}x_{0,i}<y_{0,i}<x_{0,i}$.
For every $y_i\in I(y_{0,i}, r_i)$, from the facts that $K_0>\max\{{1\over K_2},\,\frac1{{K}_3}\}+1$ and $\wz M\ge100K_0$, we have
$$0<{x_{0,i}\over y_i} -1<{K}_3.$$

To continue,  we use Proposition \ref{p-estimate of riesz kernel} (iii)
and the fact that $y_i\sim y_{0,i}\sim x_{0,i}$ for any $y_i\in I(y_{0,i}, r_i)$ for $i=1,2$. Then we see that
\begin{align}\label{lower bound of riesz 4}
&\lf| \wrizo \wrizt (g)(x_{0,1},x_{0,2}) \r|\\
&\quad\gs\int_{y_{0,1}-r_1}^{y_{0,1}+r_1}\frac1{x_{0,1}^\lz y_{0,1}^\lz} {1\over x_{0,1}-y_1}\dmz(y_1)\int_{y_{0,2}-r_2}^{y_{0,2}+r_2}\frac1{x_{0,2}^\lz y_{0,2}^\lz} {1\over x_{0,2}-y_2}\dmz(y_2)\noz\\
&\quad  \sim \int_{y_{0,1}-r_1}^{y_{0,1}+r_1} {1\over x_{0,1}-y_{0,1}}dy_1\ \int_{y_{0,2}-r_2}^{y_{0,2}+r_2} {1\over x_{0,2}-y_{0,2}}dy_2 \sim\frac{1}{\wz M^2}.\noz
\end{align}
Thus, from \eqref{lower bound of riesz 4},  we have that
\begin{align*}
 \|g \|_{\ltp}  \| h \|_{\ltp}
 \ls \wz M^2 \bigg({  y_{0,1}^{2\lz}\, r_1\ y_{0,2}^{2\lz}\, r_2 \over   x_{0,1}^{2\lz}\, r_1\ x_{0,2}^{2\lz}\, r_2  } \bigg)^{1\over2} \ls M^{2}.
\end{align*}
Again we obtain that   $\| a - \Pi(f,g) \|_\hop \ls \epsilon$.
\end{proof}

\begin{proof}[\bf Proof of Theorem  \ref{thm: weak factotrization}]
We first point out  that from \eqref{boundedness of bilinear form}, for every $g, h\in L^\infty(\rlz)$ with compact support,
$$\|\Pi(g, h)\|_\hop\ls \|g\|_\ltp\|h\|_\ltp.$$
Based on this upper bound, for every $f\in\hop$ having the representation \eqref{represent of H1} with
$$\sum_{k=1}^\fz\sum_{j=1}^\fz\lf|\az^k_j\r|\lf\|g^k_j\r\|_\ltp\lf\|h^k_j\r\|_\ltp<\fz,$$
we have that
\begin{eqnarray*}
\|f\|_\hop&\ls&\sum_{k=1}^\fz\sum_{j=1}^\fz|\alpha^k_j|\,  \Big\|  \Pi\lf(g^k_j,h^k_j\r)\Big\|_{\hop}\ls \sum_{k=1}^\fz\sum_{j=1}^\fz\lf|\az^k_j\r|\lf\|g^k_j\r\|_\ltp\lf\|h^k_j\r\|_\ltp,
\end{eqnarray*}
which gives that
\begin{eqnarray*}
\|f\|_\hop&\ls&\inf\lf\{\sum_{k=1}^\fz\sum_{j=1}^\fz\lf|\az^k_j\r|\lf\|g^k_j\r\|_\ltp\lf\|h^k_j\r\|_\ltp:
f=\sum_{k=1}^\fz\sum_{j=1}^\fz \az^k_j\,\Pi\lf(g^k_j,h^k_j\r)\r\}.
\end{eqnarray*}

It remains to show that for every $f\in\hop$, $f$ has a representation as in \eqref{represent of H1} with
\begin{equation}\label{lower bound of H1 respent}
\inf\lf\{\sum_{k=1}^\fz\sum_{j=1}^\fz\lf|\az^k_j\r|\lf\|g^k_j\r\|_\ltp\lf\|h^k_j\r\|_\ltp:
f=\sum_{k=1}^\fz\sum_{j=1}^\fz \az^k_j\,\Pi\lf(g^k_j,h^k_j\r)\r\}\ls \|f\|_\hop.
\end{equation}
To this end, assume that $f$ has the following atomic representation
 $\displaystyle f=\sum_{j=1}^\fz\az^1_ja^1_j$ with $\displaystyle\sum_{j=1}^\fz|\az^1_j|\le \wz C_0\|f\|_\hop$
 for certain absolute constant $\wz C_0\in(1, \fz)$.
We show that for every $\epsilon\in\left(0, \wz C_0^{-1}\right)$ and every $K\in\nn$, $f$ has the following representation
\begin{equation}\label{itration}
f=\sum_{k=1}^K\sum_{j=1}^\fz\az^k_j\,\Pi\lf(g^k_j, h^k_j\r)+E_K,
\end{equation}
where
\begin{equation}\label{itration-2}
\sum_{j=1}^\fz\lf|\az^k_j\r|\le  \ez^{k-1}\wz C_0^k\|f\|_\hop,
\end{equation}
and $E_K\in \hop$ with
\begin{equation}\label{itration-3}
\|E_K\|_\hop\le (\ez \wz C_0)^K\|f\|_\hop,
\end{equation}
and $g^k_j\in\ltp$, $h^k_j\in\ltp$ for each $k$ and $j$,
$\{\az^k_j\}_{j}\in \ell^1$ for each $k$ satisfying that
\begin{equation}\label{itration-1}
\lf\|g^k_j\r\|_\ltp\lf\|h^k_j\r\|_\ltp\ls C(\ez)
\end{equation}
with the absolute constant $C(\ez)= \wz M^{2+2\lz}$, where $M$ is the constant in the proof of Theorem \ref{thm:recfac} satisfying $\wz M\ge100K_0$
and $\frac{\log_2 \wz M}{\wz M}<\ez$.

In fact, for given $\ez$ and each $a^1_j$,  by Theorem \ref{thm:recfac} we obtain that
 there exist
 $g^1_j\in\ltp$ and $h^1_j\in\ltp$ with
 $$\lf\|g^1_j\r\|_\ltp\lf\|h^1_j\r\|_\ltp\ls C(\ez)$$
  and
 $$\lf\| a^1_j-\Pi\lf(g^1_j,h^1_j\r)\r\|_\hop<\ez.$$

Now we write
\begin{align*}
f&=\sum_{j=1}^\fz\az^1_ja^1_j
=\sum_{j=1}^\fz\az^1_j\Pi\lf(g^1_j,h^1_j\r)+ \sum_{j=1}^\fz\az^1_j\lf[ a^1_j-\Pi\lf(g^1_j,h^1_j\r)\r]=:M_1+E_1.
\end{align*}
Observe that
\begin{align*}
\|E_1\|_\hop&\le \sum_{j=1}^\fz\lf|\az^1_j\r| \lf\| a^1_j-\Pi\lf(g^1_j,h^1_j\r)\r\|_\hop \le \ez \wz C_0\|f\|_\hop.
\end{align*}

Since $E_1\in\hop$, for the given $\wz C_0$,
there exists a sequence of atoms $\{a^2_j\}_j$ and numbers $\{\az^2_j\}_j$
such that $\displaystyle E_1=\sum_{j=1}^\fz\az^2_ja^2_j$ and
\begin{equation*}
\sum_{j=1}^\fz\lf|\az^2_j\r|\le \wz C_0\|E_1\|_\hop\le \ez \wz C_0^2\|f\|_\hop.
\end{equation*}
Again,  we have that for given $\ez$, there exists a representation of $E_1$ such that
\begin{align*}
E_1&=\sum_{j=1}^\fz\az^2_j\Pi\lf(g^2_j,h^2_j\r)+ \sum_{j=1}^\fz\az^2_j\lf[ a^2_j-\Pi\lf(g^2_j,h^2_j\r)\r]=:M_2+E_2,
\end{align*}
and
\begin{equation*}
\lf\|g^2_j\r\|_\ltp\lf\|h^2_j\r\|_\ltp\ls C(\ez)\,\ {\rm and}\,\, \lf\| a^2_j-\Pi\lf(g^2_j, h^2_j\r)\r\|_\hop<\frac{\ez}{2}.
\end{equation*}
Moreover,
\begin{eqnarray*}
\|E_2\|_\hop&\le& \sum_{j=1}^\fz\lf|\az^2_j\r| \lf\| a^2_j-\Pi\lf(g^2_j,h^2_j\r)\r\|_\hop
\le (\ez \wz C_0)^2\|f\|_\hop.
\end{eqnarray*}
Now we conclude that
\begin{eqnarray*}
f=\sum_{j=1}^\fz\az^1_ja^1_j=\sum_{k=1}^2\sum_{j=1}^\fz\az^k_j\Pi\lf(g^k_j, h^k_j\r)+E_2,
\end{eqnarray*}

Continuing in this way, we deduce that for every $K\in\nn$, $f$ has the representation \eqref{itration} satisfying
\eqref{itration-1}, \eqref{itration-2}, and \eqref{itration-3}. Thus letting $K\to\fz$, we
see that \eqref{represent of H1} holds. Moreover, since $\ez \wz C_0<1$, we have that
$$\sum_{k=1}^\fz\sum_{j=1}^\fz \lf|\az^k_j\r|\le \sum_{k=1}^\fz\ez^{-1}(\ez \wz C_0)^k\|f\|_\hop\ls \|f\|_\hop,$$
which implies \eqref{lower bound of H1 respent} and hence, completes the proof of Theorem  \ref{thm: weak factotrization}.
\end{proof}

\bigskip
\begin{proof}[\bf Proof of (i)$\Longleftarrow$(iv)]
Suppose that $b\in L^2_{loc}(\rlz)$. Assume that $\mbhh$ is bounded on $\ltp$.

From the definition of $h^1(\rlz)$, given $f\in h^1(\rlz)$, there exists a number sequence $\{\lambda_j\}_{j=1}^\infty$
and atoms $\{a_j\}_{j=1}^\infty$ such that
$$ f = \sum_{j=1}^\infty\lambda_j a_j,$$
where the series converges in the $h^1(\rlz)$ norm and
$ \|f\|_{h^1(\rlz)}\approx \sum_{j=1}^\infty |\lambda_j|. $
Hence, we have that $f_N:= \sum_{j=1}^N \lambda_j a_j$ tends to $f$ as $N\to +\infty$ in the
 $h^1(\rlz)$ norm, which implies that $ h^1(\rlz)\cap L_c^\infty(\rlz)$ is dense in $h^1(\rlz)$, where recall that $L_c^\infty(\rlz)$ is the subspace of $L^\infty(\rlz)$ consisting of functions with compact support in $\R_+\times\R_+$.

Now for $f\in\hop \cap L_c^\infty(\rlz)$,
from Theorem \ref{thm: weak factotrization},
we choose a weak factorization of $f$ such that
\begin{eqnarray}\label{eeeee}
f&=&\sum_{k=1}^\fz\sum_{j=1}^\fz \az^k_j  \Pi\lf(g^k_j,h^k_j\r)
\end{eqnarray}
in the sense of $h^1(\rlz)$, where the sequence $\{ \az^k_j \}\in \ell^1$ and the functions
$g^k_j$ and $h^k_j$ are in $L^\infty_c(\rlz)$ satisfying
$$\sum_{k=1}^\fz\sum_{j=1}^\fz \lf|\az^k_j\r|\lf\|g^k_j\r\|_\ltp\lf\|h^k_j\r\|_\ltp \ls \|f\|_{h^1(\rlz)}.$$
From the definition of bilinear form $\Pi$ as in \eqref{bilinear form}, we see that
$\Pi\lf(g^k_j,h^k_j\r)$ is in $L^2(\rlz)$ with compact support.

Since $f\in \hop \cap L_c^\infty(\rlz)$, we see that $f$ is in $L^{2}(U)$, where we use the set $U$ to denote the support of $f$. Hence,
$$ \int_{\R_+\times\R_+} b(x_1,x_2)f(x_1,x_2)\,\dmz(x_1)\dmz(x_2) $$
is well-defined, since $b\in L^2_{loc}(\rlz)$ and hence in $L^2(U)$.

We now define
$$ b_i(x_1,x_2) = b(x_1,x_2) \chi_{\{(x_1,x_2)\in \R_+\times\R_+:\ |b(x_1,x_2)|\leq i\}}(x_1,x_2),\quad i=1,2,... $$
It is clear that $b_i(x_1,x_2)\to b(x_1,x_2)$ as $i\to\infty$ in the sense of $L^2(U)$.
And then we have
$$ \int_{\R_+\times\R_+} b(x_1,x_2)f(x_1,x_2)\,\dmz(x_1)\dmz(x_2) =\lim_{i\to\infty}  \int_{\R_+\times\R_+} b_i(x_1,x_2)f(x_1,x_2)\,\dmz(x_1)\dmz(x_2). $$
Next, for each $i=1,2,\ldots$,
we have that
\begin{align*}
&\int_{\R_+\times\R_+} b_i(x_1,x_2)f(x_1,x_2)\,\dmz(x_1)\dmz(x_2)\\
 &=\int_{\R_+\times\R_+} b_i(x_1,x_2) \sum_{k=1}^\fz\sum_{j=1}^\fz \az^k_j  \Pi\big(g^k_j,h^k_j\big)(x_1,x_2)\,\dmz(x_1)\dmz(x_2)   \\
 &=\sum_{k=1}^\fz\sum_{j=1}^\fz \az^k_j  \int_{\R_+\times\R_+} b_i(x_1,x_2)  \Pi\big(g^k_j,h^k_j\big)(x_1,x_2)\,\dmz(x_1)\dmz(x_2)   \\
  &=\sum_{k=1}^\fz\sum_{j=1}^\fz \az^k_j \langle b_i, \Pi\big(g^k_j,h^k_j\big)\rangle_{L^2(\rlz)}
\end{align*}
since $b_i$ is in $L^\infty(U)$ and hence is in ${\rm bmo}(\rlz)$,  \eqref{eeeee} holds in $h^1(\rlz)$ and
each $\Pi\big(g^k_j,h^k_j\big)$ is in $h^1(\rlz)$ as showed in Proposition \ref{t-H1 estimate of pi}.

As a consequence, we obtain that
\begin{align}\label{eeee}
|\langle b,f \rangle_{L^2(\rlz)}| &= \lim_{i\to\infty} \bigg|  \int_{\R_+\times\R_+} b_i(x_1,x_2)f(x_1,x_2)\,\dmz(x_1)\dmz(x_2) \bigg|\\
&\leq  \lim_{i\to\infty} \sum_{k=1}^\fz\sum_{j=1}^\fz |\az^k_j|\,| \langle b_i, \Pi\big(g^k_j,h^k_j\big)\rangle_{L^2(\rlz)} |\nonumber\\
&=   \sum_{k=1}^\fz\sum_{j=1}^\fz \lim_{i\to\infty} |\az^k_j|\,| \langle b_i, \Pi\big(g^k_j,h^k_j\big)\rangle_{L^2(\rlz)} |,\nonumber
\end{align}
where the equality above holds since all the terms are non-negative. Next,  since $b_i(x_1,x_2)\to b(x_1,x_2)$ as $i\to\infty$ in the sense of $L^2(V)$ and $\Pi\big(g^k_j,h^k_j\big)$ is in  $L^{2}(V)$ with $V$ the support of $\Pi\big(g^k_j,h^k_j\big)$, we have that
$$ \lim_{i\to\infty} \langle b_i,\Pi\big(g^k_j,h^k_j\big)\rangle_{L^2(\rlz)} = \langle b,\Pi\big(g^k_j,h^k_j\big)\rangle_{L^2(\rlz)}, $$
which implies that
$$ \lim_{i\to\infty}  |\langle b_i,\Pi\big(g^k_j,h^k_j\big)\rangle_{L^2(\rlz)}| = |\langle b,\Pi\big(g^k_j,h^k_j\big)\rangle_{L^2(\rlz)}|. $$
This, together with \eqref{eeee}, yields that
\begin{align*}
|\langle b,f \rangle_{L^2(\rlz)}| &\leq \sum_{k=1}^{\infty} \sum_{s=1}^\infty |\az^k_j|\,| \langle b, \Pi\big(g^k_j,h^k_j\big)\rangle_{L^2(\rlz)} |\\
&=\sum_{k=1}^\fz\sum_{j=1}^\fz |\az^k_j| \cdot\Big|\lf\langle g^k_j,\mbhh h^k_j\r\rangle_{\ltp}\Big|,
\end{align*}
which is further bounded by
\begin{eqnarray*}
&&\sum_{k=1}^\fz\sum_{j=1}^\fz |\az^k_j|\lf\|g^k_j\r\|_\ltp \big\|\mbhh h^k_j\big\|_\ltp\\
&&\le\lf\|\mbhh:\ltp\to\ltp \r\| \, \sum_{k=1}^\fz\sum_{j=1}^\fz |\az^k_j|\big\|g^k_j\big\|_\ltp\big\|h^k_j\big\|_\ltp\\
& &\ls\lf\|\mbhh:\ltp\to\ltp\r\|\|f\|_\hop.
\end{eqnarray*}
Then by the fact that $\{f\in \hop: \,f {\rm\ has\ compact\ support} \}$ is dense in $\hop$, and
the duality between $\hop$ and $\bmop$ (see Theorem \ref{thm dual}), we finish the proof.
\end{proof}


\begin{proof}[\bf Proof of Corollary \ref{coro}]
Suppose $b\in \lbmo(\rlz)$. Then based on (iii) of Theorem \ref{thm main2}, we obtain that
 there exist $f_1,f_2,g_1,g_2\in L^\infty(\rlz)$ such that $b=f_1+\rizo g_1=f_2+\rizo g_2$ and moreover,
$\|b\|_{\lbmo(\rlz)}\approx \inf\Big\{ \max_{i=1,2} \big\{\|f_i\|_{L^\infty(\rlz)},\|g_i\|_{L^\infty(\rlz)}\big\} \Big\}$ where the infimum is taken over all possible decompositions of
$b$.

We now show that $b$ is also in $\bmo_{\Delta_\lz}(\rlz)$. To see this, we recall the recent result of decomposition of
 $\bmo_{\Delta_\lz}(\rlz)$ obtained in \cite{DLWY2}.

\begin{thm}[\cite{DLWY2}]\label{thm BMO decomposition}
The following two statements are equivalent.

${\rm (i)}$ $\varphi \in {\bmo}_{\Delta_\lambda}( \rlz ) $;

${\rm(ii)}$ There exist $h_i\in L^\infty( \rlz )$, $i=1,2,3,4$, such that
$$ \varphi= h_1 +  R_{\Delta_\lambda,\,1}(h_2) + R_{\Delta_\lambda,\,2}(h_3) + R_{\Delta_\lambda,\,1}R_{\Delta_\lambda,\,2}(h_4).$$
\end{thm}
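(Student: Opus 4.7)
The plan is to prove the two directions separately. The direction (ii)$\Longrightarrow$(i) follows from checking that each of $R_{\Delta_\lambda,1}$, $R_{\Delta_\lambda,2}$ and the iterate $R_{\Delta_\lambda,1}R_{\Delta_\lambda,2}$ is bounded from $L^\infty(\rlz)$ into $\bmo_{\Delta_\lambda}(\rlz)$, combined with the trivial inclusion $L^\infty(\rlz)\hookrightarrow \bmo_{\Delta_\lambda}(\rlz)$. The single-variable operators are tensor products of a one-parameter Bessel Calder\'on--Zygmund operator with the identity, while the iterate is a genuine product Calder\'on--Zygmund operator on the space of homogeneous type $\rlz$; their $L^\infty\to \bmo_{\Delta_\lambda}$ boundedness can be extracted from the Haar-shift/paraproduct machinery recalled in Section~\ref{sec:representation}, tested against the wavelet coefficients that define the product-$\bmo$ norm in Definition~\ref{def-BMO}.

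For (i)$\Longrightarrow$(ii), the plan is a Hahn--Banach argument based on a Riesz-transform characterization of the product Hardy space: phrased in terms of the adjoint transforms, an $L^1$ function $f$ lies in $H^1_{\Delta_\lambda}(\rlz)$ if and only if each of $f$, $\widetilde{R}_{\Delta_\lambda,1}f$, $\widetilde{R}_{\Delta_\lambda,2}f$ and $\widetilde{R}_{\Delta_\lambda,1}\widetilde{R}_{\Delta_\lambda,2}f$ belongs to $L^1(\rlz)$, with equivalent norms (this is the product-space analogue of the one-parameter result from \cite{bdt} quoted in the proof of (i)$\Longleftrightarrow$(iii); using adjoints here will make the output match the form stated in (ii)). Consequently, $\Phi(f):=(f,\widetilde{R}_{\Delta_\lambda,1}f,\widetilde{R}_{\Delta_\lambda,2}f,\widetilde{R}_{\Delta_\lambda,1}\widetilde{R}_{\Delta_\lambda,2}f)$ identifies $H^1_{\Delta_\lambda}(\rlz)$ with a closed subspace $W\subset L^1(\rlz)^{\oplus 4}$. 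Given $\varphi\in\bmo_{\Delta_\lambda}(\rlz)$, the product $H^1$--$\bmo$ duality (a bi-parameter analogue of Theorem~\ref{thm dual}) provides a bounded linear functional $L_\varphi(f):=\int\varphi f\,d\mu_\lambda$ on $H^1_{\Delta_\lambda}(\rlz)$, which transports via $\Phi^{-1}$ to a bounded functional on $W$. Hahn--Banach extends it to a bounded functional on $L^1(\rlz)^{\oplus 4}$, and $L^1$--$L^\infty$ duality represents the extension by some $(h_1,h_2,h_3,h_4)\in L^\infty(\rlz)^{\oplus 4}$ satisfying
$$\int_{\rlz}\varphi f\,d\mu_\lambda=\int_{\rlz}\bigl(h_1 f+h_2\widetilde{R}_{\Delta_\lambda,1}f+h_3\widetilde{R}_{\Delta_\lambda,2}f+h_4\widetilde{R}_{\Delta_\lambda,1}\widetilde{R}_{\Delta_\lambda,2}f\bigr)\,d\mu_\lambda$$
for every $f\in H^1_{\Delta_\lambda}(\rlz)$. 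Moving the adjoint Riesz transforms off $f$ and onto the $h_i$ then yields the decomposition $\varphi=h_1+R_{\Delta_\lambda,1}(h_2)+R_{\Delta_\lambda,2}(h_3)+R_{\Delta_\lambda,1}R_{\Delta_\lambda,2}(h_4)$ claimed in (ii), the identity being understood in $(\GGp)'$ after verifying it on a dense subspace.

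The main obstacle is justifying the two product-space ingredients on which the Hahn--Banach step relies: (a) the (adjoint) Riesz-transform characterization of $H^1_{\Delta_\lambda}(\rlz)$, and (b) the duality $H^1_{\Delta_\lambda}(\rlz)^*=\bmo_{\Delta_\lambda}(\rlz)$ with comparable norms. The one-parameter Bessel versions of both are already invoked earlier in this paper, so the real work is extending them to the product setting; this should follow by combining the one-parameter arguments of \cite{bdt} with standard bi-parameter Hardy-space technology (tent spaces, Lusin area functions, rectangular atomic decompositions) adapted to $\rlz$ along the lines of the Bessel-setting references cited in the introduction. A minor bookkeeping point is the choice of adjoint versus original Riesz transforms in the characterization of $H^1_{\Delta_\lambda}(\rlz)$; since both families are Calder\'on--Zygmund operators with symmetric $L^2$ boundedness properties on $\rlz$, either version is available, and the adjoint version is the one that converts directly, via transposition, into the decomposition displayed in (ii).
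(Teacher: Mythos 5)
There is nothing in this paper to compare your argument against: Theorem~\ref{thm BMO decomposition} is quoted verbatim from \cite{DLWY2} and is used here only as an imported black box in the proof of Corollary~\ref{coro}; the present paper contains no proof of it. Your overall strategy --- the Fefferman--Stein/Hahn--Banach scheme, embedding the product Hardy space into $L^1(\rlz)^{\oplus 4}$ via $f\mapsto (f,\wrizo f,\wrizt f,\wrizo\wrizt f)$, representing the functional $f\mapsto\int\varphi f\,d\mu_\lambda$ after extension, and transposing the adjoint Riesz transforms onto the $L^\infty$ representatives --- is indeed the standard route to such decompositions, and your care in using the adjoints $\wrizo,\wrizt$ (which, unlike the Euclidean Riesz transforms, are not minus the operators themselves) so that transposition produces exactly the form in (ii) is a correct and necessary point.

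The genuine gap is that the two ingredients you label (a) and (b) --- the Riesz-transform characterization of the \emph{product} Hardy space $H^1_{\Delta_\lambda}(\rlz)$ and the product $H^1$--$\bmo_{\Delta_\lambda}$ duality --- are not "standard bi-parameter technology to be adapted"; they are precisely the main content of the cited reference \cite{DLWY2} (its title is the characterization of product Hardy spaces in the Bessel setting). Even in the classical Euclidean/bidisc case, the bi-parameter Riesz (or double Hilbert transform) characterization of $H^1$ is not a formal extension of the one-parameter argument of the type in \cite{bdt}: it requires genuinely multi-parameter tools (area-integral and rectangle-atom analysis, Journ\'e-type covering lemmas), and in the Bessel setting one must additionally control the non-convolution kernels of $\riz$ and their adjoints. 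So, as written, your proof defers its entire difficulty to an assertion. Two smaller points: in the direction (ii)$\Rightarrow$(i), the cleanest justification that $\rizo h$, $\rizt h$, $\rizo\rizt h\in\bmo_{\Delta_\lambda}(\rlz)$ for $h\in L^\infty(\rlz)$ is again by duality with the Riesz characterization of product $H^1$ (or, for the iterate, by the product Calder\'on--Zygmund bound $L^\infty\to\bmo$ from \cite{HLL}, as this paper does in Corollary~\ref{coro}); the claim that it "can be extracted from the Haar-shift machinery tested against wavelet coefficients" is itself a Carleson-measure estimate that you would have to prove, not a quotation of Section~\ref{sec:representation}. Finally, the transposition step needs the pairing $\int h_2\,\wrizo f\,d\mu_\lambda=\int (\rizo h_2) f\,d\mu_\lambda$ to be justified for $h_2\in L^\infty(\rlz)$, where $\rizo h_2$ is only defined modulo constants (or as an element of $(\GGp)'$); your remark about verifying it on a dense subspace is the right instinct but should be carried out against atoms with cancellation.
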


Back to the proof, we now choose $h_1=f_1$, $h_2=g_1$, $h_3=h_4=0$. Then it is easy to see that
$$ b= h_1 +  R_{\Delta_\lambda,\,1}(h_2) + R_{\Delta_\lambda,\,2}(h_3) + R_{\Delta_\lambda,\,1}R_{\Delta_\lambda,\,2}(h_4),$$
which implies that $b\in \bmo_{\Delta_\lz}(\rlz)$.

Similarly, we can also choose $h_1=f_2$, $h_3=g_2$, $h_2=h_4=0$. Combining these two choices, we further obtain that
$$ \|b\|_{\bmo_{\Delta_\lz}(\rlz)}\ls \|b\|_{\lbmo(\rlz)},$$
which implies that
$$ \lbmo(\rlz)\subset \bmo_{\Delta_\lz}(\rlz).$$

Next we prove that $\lbmo_{\Delta_\lz}(\rlz)$ is a proper subspace of $\bmo_{\Delta_\lz}(\rlz)$. To see this,
we let $K_3$ be the constant in (iii) of Proposition \ref{p-estimate of riesz kernel}.
Since $\rizo\rizt$ is a product Calder\'on--Zygmund operator on $\rlz$ and hence it is bounded from $L^\infty(\rlz)$ to $\bmo_{\Delta_\lz}(\rlz)$ (see \cite{HLL}). Then, it is direct that the following function
\begin{align}\label{a BMO function}
b(x_1,x_2):=\rizo\rizt(\chi_{(1,2)\times(1,2)})(x_1,x_2)
\end{align}
is in $\bmo_{\Delta_\lz}(\rlz)$.

Next we claim that this function $b(x_1,x_2)$ is not in $\lbmo(\rlz)$. To see this,
we first note that $b(x_1,x_2)$ can be written as
\begin{align*}\label{a BMO function}
b(x_1,x_2)=\riz(\chi_{(1,2)})(x_1)\riz(\chi_{(1,2)})(x_2).
\end{align*}
We now verify that $\riz(\chi_{(1,2)})(x_1)$
is not in $L^\infty(\R_+,\dmz)$.
%
%
In fact, by Proposition \ref{p-estimate of riesz kernel}, for every $\delta>0$ small enough and $x_1\in (1-\delta,1)$, we choose $\epsilon=2\delta$. Then we have
\begin{align*}
\riz(\chi_{(1,2)})(x_1)&=\int_1^{2} \riz(x_1,y)y^{2\lz}dy
\geq \int_{x_1+\epsilon}^{(1+K_3)x_1} \riz(x_1,y)y^{2\lz}dy\\
&\geq  \int_{x_1+\epsilon}^{(1+K_3)x_1}  C_{K_3,\lz}{1\over x_1^\lambda y^\lambda}{1\over y-x_1}y^{2\lz}dy\\
&\gs\int_{x_1+\epsilon}^{(1+K_3)x_1}  {1\over y-x_1}dy\\
&= \ln(y-x_1)\Big|_{x_1+\epsilon}^{(1+K_3)x_1}\\
&=\ln(K_3x_1)-\ln\epsilon\\
&=\ln(K_3x_1)-\ln(2\delta).
\end{align*}
Then it is direct that when $\delta\to 0^+$, $\riz(\chi_{(0,1)})(x_1)$ is unbounded around the interval $(1-\delta,1)$.

Hence, for the function $b(x_1,x_2)$ defined as in \eqref{a BMO function}, when we fix $x_1$, $b(x_1, x_2)$ as a function of $x_2$ is in $\bmo_{\Delta_\lz}(\R_+,\dmz)$. However, it is not uniform for the variable $x_1$.
\end{proof}

{\bf Acknowledgments:}
X. T. Duong and J. Li are supported by ARC DP 160100153. J. Li is also supported by Macquarie University New Staff Grant.
B. D. Wick's research supported in part by National Science Foundation
DMS grants \#1603246 and \#1560955.
D. Yang is supported by the NNSF of China (Grant No. 11571289). The authors would like to thank the referee for careful reading of the paper and for
helpful comments and suggestions.

\end{document}